\newcommand{\rr}{\mathbb{R}}
\newcommand{\cc}{\mathcal{C}}
\newcommand{\hh}{\mathcal{H}}
\newcommand{\aaa}{\mathcal{A}}
\newcommand{\ff}{\mathcal{F}}
\newcommand{\dd}{\partial}
\newcommand{\ep}{\varepsilon}
\newtheorem{theo}{Theorem}
\newtheorem{prop}{Proposition}[section]
\newtheorem{lem}[prop]{Lemma}
\newtheorem{cor}[prop]{Corollary}
\theoremstyle{definition} 
\newtheorem{defi}[prop]{Definition}
\newtheorem{hyp1}[prop]{Hypothesis}
\theoremstyle{remark}
\newtheorem{rem}[prop]{Remark}
\theoremstyle{remark}
\newtheorem*{exe}{Example}
\theoremstyle{definition} 
\theoremstyle{remark}
\definecolor{Green}{RGB}{0,128,0}
\definecolor{Green2}{RGB}{0,128,0}
\definecolor{Orange}{RGB}{255,165,0}
\title[Non-existence of characteristics for viscosity solutions]{Non-existence of global characteristics for viscosity solutions}
\author[V. Roos]{Valentine Roos\\ \\ \today}\thanks{The research leading to these results has received funding from the European Research Council under
	the European Union’s Seventh Framework Programme (FP/2007-2013) / ERC Grant Agreement 307062 and from the French National Research Agency via ANR-12-BLAN-WKBHJ}
\begin{document}
	
	\maketitle

\begin{abstract} Two different types of generalized solutions, namely viscosity and variational solutions, were introduced to solve the first-order evolutionary Hamilton--Jacobi equation. They coincide if the Hamiltonian is convex in the momentum variable. In this paper we prove that there exists no other class of integrable Hamiltonians sharing this property. To do so, we build for any non-convex non-concave integrable Hamiltonian a smooth initial condition such that the graph of the viscosity solution is not contained in the wavefront associated with the Cauchy problem. The construction is based on a new example for a saddle Hamiltonian and a precise analysis of the one-dimensional case, coupled with reduction and approximation arguments.
\end{abstract}

\section{Introduction}

Let $H : \rr \times T^\star \rr^d \to \rr $ be a $\cc^2$ Hamiltonian. 
We study the Cauchy problem associated with the evolutionary Hamilton--Jacobi equation
\begin{equation}\tag{HJ}\label{HJ}
\dd_t u (t,q) + H(t,q,\dd_q u (t,q))=0
\end{equation}
where $u : \rr\times \rr^d \to \rr$ is the unknown function, with a Lipschitz initial datum $u(0,\cdot)=u_0$.

The method of characteristics shows that a classical solution of this equation is given by characteristics (see \S \ref{introclass}). If the projections of characteristics associated with $u_0$ cross, the method gives rise to a multivalued solution, with a multigraph called \emph{wavefront} and denoted by $\ff_{u_0}$ (see \eqref{defwf}). This implies in particular that for some $u_0$ and $H$, even smooth, the evolutionary Hamilton--Jacobi equation does not admit classical solutions in large time. 

A first type of generalized solution, called \emph{viscosity solution} (see \S \ref{introvisc}), was introduced by Lions, Crandall and Evans in the early 80s for Hamilton-Jacobi equations. It presents multiple assets: it is well defined, unique and stable in a large range of assumptions on the Hamiltonian and the initial condition. It has a local definition, which allows to avoid the delicate question of how to choose a solution amongst the multivalued solution and its associated characteristics. This local definition can be extended effortless to larger classes of elliptic PDEs, which is an other major asset of viscosity solutions.
Also, the operator giving the viscosity solution satisfies a convenient semigroup property. 

When the Hamiltonian is convex in the fiber (more precisely when it is Tonelli), this viscosity operator is given by the Lax--Oleinik semigroup, which by definition gives a section of the wavefront. The main result of this article addresses the converse interrogation, in the case of integrable (\emph{i.e.} depending only on the fiber variable) Hamiltonians.
\begin{theo}\label{joukrec}
	If  $p\mapsto H(p)$ is a neither convex nor concave integrable Hamiltonian with bounded second derivative, there exists a smooth Lipschitz initial condition $u_0$ such that the graph of the viscosity solution associated with $u_0$ is not included in the wavefront $\ff_{u_0}$.
\end{theo}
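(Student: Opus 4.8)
The plan is to isolate two model situations that already display the phenomenon — a one-dimensional flux that is neither convex nor concave, and a two-dimensional \emph{saddle} Hamiltonian such as $H(p_1,p_2)=p_1p_2$ — treat each by an explicit construction, and then recover Theorem~\ref{joukrec} by restricting $H$ to a well-chosen affine subspace of the fiber and by a stability/approximation argument. Both models are genuinely needed: a saddle like $p_1p_2$ is convex or concave on every affine line, so it cannot be reached by a one-dimensional reduction and forces a truly two-dimensional datum.

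\emph{One-dimensional model.} Suppose first $d=1$. Since $H''$ changes sign, after an affine normalization we may assume $H$ has an inflection point, being convex on one side and concave on the other. I would take $u_0$ piecewise affine with two or three slopes straddling the inflection and then mollify. For such data the wavefront $\ff_{u_0}$ is, above each $(t,q)$, a finite set of heights whose geometry is dictated by the chords and tangents to the graph of $H$, and it develops a cusp where two characteristic families meet. The viscosity solution, on the other hand, is obtained by solving the scalar conservation law $\dd_t v+\dd_q(H(v))=0$ for $v=\dd_q u$ under the Oleinik admissibility condition and integrating in $q$; for a convex–concave flux the Riemann fans are composite (a rarefaction welded to an entropy shock), and their nonlinear interaction produces, at a suitable point $(t,q)$, a value of $u$ that is wedged strictly between — or beyond — all the finitely many heights of $\ff_{u_0}$ above $(t,q)$. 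Making this ``wedging'' quantitative is, I expect, the technical core: one must exhibit one explicit point and bound the viscosity value away from every characteristic height by a fixed positive amount.

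\emph{Saddle model.} For $H=p_1p_2$ (or a quadratic saddle) the plan is to write down explicitly a smooth Lipschitz $u_0$ on $\rr^2$ for which the two characteristic families cross so as to leave an open space-time region reached only by characteristics whose transported heights all differ from the viscosity value; the viscosity value itself is pinned down by the semigroup property together with sub- and super-solutions assembled from one-dimensional slices. The obstruction here is that, unlike in dimension one, one must control the full folded Lagrangian front and rule out all of its sheets simultaneously rather than a finite list.

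\emph{Reduction and approximation.} Now let $H$ on $\rr^d$ be integrable, neither convex nor concave, with bounded Hessian. Pick a segment along which $H$ fails to be convex and one along which it fails to be concave. Either these may be taken on a common line $\{p_0+sn\}$ — on which $H$ is then neither convex nor concave — or not. In the first case I set $u_0(q)=w(\langle q,n\rangle)+\langle q,p_0\rangle$ with $w$ the one-dimensional datum above: the Cauchy problem, its characteristics, its wavefront and its viscosity solution all descend to the one-dimensional problem for the flux $s\mapsto H(p_0+sn)$, which carries the counterexample. In the second case $H$ is convex or concave along every line, and one localizes near a point witnessing the failure of both convexity and concavity, at which — after an affine change of fiber coordinates and a rescaling — $H$ is a small $\cc^2$-perturbation of a saddle-type model, so the two-dimensional construction applies; checking that the listed cases are exhaustive is itself part of the work. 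It then remains to smooth the piecewise-affine data and to replace each model Hamiltonian by the actual $H$, to which it is only tangent: both are absorbed by stability of viscosity solutions under uniform convergence — the bounded second derivative supplying the uniform Lipschitz and modulus-of-continuity estimates needed — together with the facts that $\ff_{u_0}$ is closed and that the constructed viscosity value misses it by a fixed positive margin, so the strict separation survives the limit. The main obstacle throughout is the explicit entropy computation of step one and its two-dimensional analogue in step two; the reduction and the approximation are comparatively soft.
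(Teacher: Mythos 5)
Your overall architecture matches the paper's: the same dichotomy (non-convexity/non-concavity along a line versus a point where the Hessian has eigenvalues of both signs), the same two model problems (1D and the saddle $p_1p_2$), the same affine reduction, and an approximation endgame. But the proposal is missing the one idea that makes the two model computations feasible, and this is a genuine gap rather than a technicality. In both models you propose to identify the viscosity solution and then check that its value at some $(t,q)$ differs from every height of the wavefront above $(t,q)$ --- the ``wedging'' in 1D and ``ruling out all sheets of the folded front'' in 2D. You rightly flag this as the technical core, but you give no method for it: for a non-convex Hamiltonian there is no Hopf or Lax--Oleinik formula to compute $V^t_0u_0$, and in the saddle case ``sub- and supersolutions assembled from one-dimensional slices'' does not pin down the value. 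The paper avoids computing the viscosity solution altogether. For a $B$-semiconcave $u_0$ and $t<1/BC$ the minimal section of the wavefront is continuous and equals the (monotone) variational operator, and a comparison argument --- write $u_0=\min_{f\in F}f$ over $\cc^2$ functions touching the graph of the Clarke derivative and use monotonicity together with the method of characteristics --- gives $V^t_0u_0\le R^t_0u_0=$ minimal section of $\ff_{u_0}$. Hence it suffices to show that this one continuous section fails the \emph{local} Oleinik entropy condition at a single shock (in 2D, to exhibit a single $\cc^1$ test function violating the subsolution inequality) to conclude that the viscosity solution lies strictly below the entire wavefront at some point. Without this comparison your plan has no mechanism for excluding the other sheets, especially in the two-dimensional case.

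The second gap is the final approximation. You invoke ``stability of viscosity solutions under uniform convergence'' and closedness of $\ff_{u_0}$, but the wavefront is \emph{not} stable under uniform convergence of the initial datum: it is built from $du_0$, and a $C^0$-small oscillatory perturbation changes $\ff_{u_0}$ drastically. What is needed is that the smooth approximants $u_n$ satisfy $d_{\rm Haus}\left(\mathrm{graph}(du_n),\mathrm{graph}(\partial u_L)\right)\to 0$ for the Clarke derivative $\partial u_L$, so that the wavefronts, which are images of these graphs under a uniformly continuous map, remain Hausdorff-close and the positive margin survives. This is supplied by the Rifford--Czarnecki approximation theorem (or at least by a one-sided Hausdorff estimate for mollification via upper semicontinuity of the Clarke differential), not by $\|u_n-u_L\|_\infty\to0$. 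The same remark applies to replacing the rescaled saddle by the true Hamiltonian: one needs quantitative stability of \emph{both} the viscosity operator and the variational operator (equivalently, of the wavefront) under perturbation of $H$, of the form $\|V^t_{0,\tilde H}u-V^t_{0,H}u\|_\infty\le t\|\tilde H-H\|_{\bar B(0,L)}$ and its analogue for $R$, not only stability of the viscosity solution.
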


The term of \emph{variational solution} (see \S \ref{introvar}) does not appear in this statement but the idea of this other generalized solution is pregnant in the whole article: roughly speaking, they can be defined as continuous functions whose graph is included in the wavefront. The notion was introduced in the early 90s by Sikorav and Chaperon, who find a way to choose a continuous section of the wavefront by selecting the minmax value of the generating family for the Lagrangian geometrical solution. Joukovskaia showed in \cite{jou} that their construction coincide with the Lax--Oleinik semigroup in the fiberwise convex case. The study of the variational operator given by this Chaperon--Sikorav method gives local estimates on the variational solutions. These estimates can be used regardless of the construction of the variational solution thanks to Proposition \ref{varmin}, which gives an elementary characterization of the variational solution for semiconcave initial data. This fact makes the whole article accessible to a reader with no specific background on symplectic geometry.

To show Theorem \ref{joukrec}, we reduce the problem to the study of two key situations in dimension $1$ and $2$, see Propositions  \ref{dim1ex} and Proposition \ref{contrex}. The example for the dimension $1$ was already well studied: it appears in \cite{chenciner}, see also \cite{izukos}. The creation of the example for the saddle Hamiltonian in dimension $2$ is the main contribution of this article. A special care was then provided to state the reduction and approximation arguments finishing the demonstration. 

Recent breakthroughs have been made in the study of the singularities of the viscosity solution of \eqref{HJ} for convex Hamiltonians, see \cite{canmazsin}, \cite{canchefat}, or \cite{canchesurvey} for a survey. A natural question following from Theorem \ref{joukrec}  is to compare these singularities for viscosity and variational solutions when the Hamiltonian is not convex anymore. On the close topic of multi-time Hamilton-Jacobi equations, let us also highlight a recent discussion about the non-existence of viscosity solutions when convexity assumptions are dropped, see \cite{davzav}. This gives another point of comparison with variational solutions, that are well-defined for this framework, see \cite{carvit}.

Since Proposition \ref{varmin} holds for non integrable Hamiltonians, we present the different objects in the non integrable framework. We will underline how they simplify in the integrable case. In that purpose, we introduce a second Hypothesis on $H$, automatically satisfied by integrable Hamiltonians with bounded second derivative, that provides the existence of both viscosity and variational solutions in the non integrable case.		
\begin{hyp1}\label{esti} There is a $C>0$ such that for each $(t,q,p)$ in $\rr\times \rr^d \times \rr^d$,\[
	\|\dd^2_{(q,p)} H(t,q,p)\| < C,\; \|\dd_{(q,p)} H(t,q,p)\| < C(1+\|p\|),
	\]where $\dd_{(q,p)} H$ and $\dd^2_{(q,p)} H$ denote the first and second order spatial derivatives of $H$. \end{hyp1}

\subsection{Classical solutions: the method of characteristics}\label{introclass}
In this section we only assume that $d^2H$ is bounded by $C$. The \emph{Hamiltonian system}
\begin{equation}\label{HS}\tag{HS}\left\{\begin{array}{l}
\dot{q}(t)= \dd_p H(t,q(t),p(t)),\\
\dot{p}(t)= - \dd_q H(t,q(t),p(t))
\end{array}\right.
\end{equation}
hence admits a complete \emph{Hamiltonian flow} $\phi^t_s$, meaning that $t\mapsto \phi^t_s(q,p)$ is the unique solution of \eqref{HS} with initial conditions $\left(q(s),p(s)\right)=(q,p)$. We denote by $\left(Q^t_s,P^t_s\right)$ the coordinates of $\phi^t_s$. 
We call a function $t \mapsto (q(t),p(t))$ solving the Hamiltonian system \eqref{HS} a \emph{Hamiltonian trajectory}.
The \emph{Hamiltonian action} of a $\cc^1$ path $\gamma(t)=\left(q(t),p(t)\right)\in T^\star \rr^d $ is denoted by \[
\aaa^t_s (\gamma)=\int^t_s p(\tau)\cdot \dot{q}(\tau)-H(\tau,q(\tau),p(\tau))d\tau.
\]

Note that in the case of an integrable Hamiltonian (that depends only on $p$), the flow is given by $\phi^t_s(q,p)=(q+(t-s)\nabla H(p),p)$ and the action of an Hamiltonian path is reduced to $\aaa^t_s(\gamma)=(t-s)(p\cdot \nabla H(p)-H(p))$. 

The method of characteristics states that if $u_0$ is a $\cc^2$ function with second derivative bounded by $B>0$, there exists $T$ depending only on $C$ and $B$ (for example $T=1/BC$ for an integrable Hamiltonian) such that the Cauchy problem \eqref{HJ} with initial condition $u_0$ has a unique $\cc^2$ solution on $[0,T]\times \rr^d \to \rr$. Furthermore, if $u$ is a $\cc^2$ solution on $[0,T]\times \rr^d$, for all $(t,q)$ in $[0,T]\times\rr^d$, there exists a unique $q_0$ in $\rr^d$ such that $Q^t_0(q_0,du_0(q_0))=q$ and if $\gamma$ denotes the Hamiltonian trajectory issued from $(q_0,du_0(q_0))$, the $\cc^2$ solution is given by the Hamiltonian action as follows:
	\[u(t,q)=u_0(q_0)+ \aaa^t_0(\gamma),\]
and its derivative satisfies $\dd_q u(t,q)=P^t_0(q_0,du_0(q))$ at the point $q=Q^t_0(q_0,du_0(q_0))$. As a consequence, if the image $\phi^t_0\left(gr(du_0)\right)$  of the graph of $du_0$ by the Hamiltonian flow is not a graph for some $t$, there is no classical solution on $[0,t]\times \rr^d$, whence the necessity to introduce generalized solutions.

\subsection{Viscosity solutions}\label{introvisc}
The viscosity solutions were introduced in the framework of Hamilton--Jacobi equations by Lions, Evans and Crandall in the early $80$'s, see \cite{cr&lions83}. We will use the following definition.
\begin{defi}\label{defivisc} A continuous function $u$ is a \emph{subsolution} of \eqref{HJ} on the set $(0,T)\times \rr^d$ if for each $\cc^1$ function $\phi:(0,T)\times \rr^d\to \rr$ such that $u-\phi$ admits a (strict) local maximum at a point $(t,q)\in(0,T)\times \rr^d$, \[\dd_t \phi(t,q)+H(t,q,\dd_q\phi(t,q))\leq 0.\]  			A continuous function $u$ is a \emph{supersolution} of \eqref{HJ} on the set $(0,T)\times \rr^d\to \rr$ if for each $\cc^1$ function $\phi:(0,T)\times \rr^d$ such that $u-\phi$ admits a (strict) local minimum at a point $(t,q)\in(0,T)\times \rr^d$, \[\dd_t \phi(t,q)+H(t,q,\dd_q\phi(t,q))\geq 0.\]  		
	A viscosity solution is both a sub- and supersolution of \eqref{HJ}.
\end{defi} 	

The set of assumptions of this paper is well adapted to the theory of viscosity solutions developed by Crandall, Lions and Ishii (see \cite{guide}), from which one can deduce the following well-posedness property.
\begin{prop}\label{visc}
	If $H$ satisfies Hypothesis \ref{esti}, the Cauchy problem associated with the \eqref{HJ} equation and a Lipschitz initial condition admits a unique Lipschitz solution. This defines a viscosity operator $\left(V^t_s\right)_{s\leq t}$ on the set of Lipschitz functions $\cc^{0,1} (\rr^d)$ which is monotonous: \[
	V^t_s u \leq V^t_s v \;\textrm{ if } \;u \leq v.
	\]
	Furthermore, if $u$ and $v$ are Lipschitz with bounded difference, 
	\[ \|  V^t_s u - V^t_s v   \|_\infty \leq \|u-v\|_\infty \,\; \textrm{ for all } \,\;s\leq t.\]
\end{prop}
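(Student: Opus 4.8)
The plan is to derive the proposition from the classical well-posedness theory of viscosity solutions for first-order Hamilton--Jacobi equations, as exposed in \cite{guide}; the only real task is to check that Hypothesis \ref{esti} supplies the structural conditions needed there. Integrating $\dd_q H$ along the segment joining $\tilde q$ to $q$ at fixed $(t,p)$, and $\dd_p H$ along the segment joining $\tilde p$ to $p$ at fixed $(t,q)$, the bound $\|\dd_{(q,p)}H\|<C(1+\|p\|)$ yields
\[
|H(t,q,p)-H(t,\tilde q,p)|\leq C(1+\|p\|)\|q-\tilde q\|,\qquad |H(t,q,p)-H(t,q,\tilde p)|\leq C\bigl(1+\max(\|p\|,\|\tilde p\|)\bigr)\|p-\tilde p\|,
\]
so $H$ is Lipschitz in $q$ locally uniformly in $p$ and locally Lipschitz in $p$, and on each finite time interval $H(t,q,0)$ grows at most linearly in $q$. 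These are exactly the hypotheses under which Perron's method and the comparison principle apply.

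For existence, fix $s$ and an $L$-Lipschitz datum $u_0$. Using the linear growth bound on $\dd_q H$ together with the continuity of $H$, one constructs (via a standard regularization of $u_0$ if needed) a viscosity sub- and a viscosity supersolution of \eqref{HJ} on $(s,+\infty)\times\rr^d$ of at most linear growth in $q$, both agreeing with $u_0$ at time $s$; concretely one may take $u_0(q)\pm K(t-s)\sqrt{1+\|q\|^2}$ with $K$ large on each finite time interval. Perron's method then produces a viscosity solution $u$ squeezed between these barriers, hence continuous up to $t=s$ with $u(s,\cdot)=u_0$, and of at most linear growth; one sets $V^t_s u_0:=u(t,\cdot)$. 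That $V^t_s u_0$ is again Lipschitz, with a spatial Lipschitz constant finite on every bounded time interval, follows from one more comparison argument: for fixed $h\in\rr^d$ the function $(t,q)\mapsto u(t,q+h)$ is a viscosity solution of \eqref{HJ} for the Hamiltonian $H(t,q+h,\cdot)$, which differs from $H$ by at most $C(1+\|p\|)\|h\|$, so comparing it with $u$ corrected by an affine-in-$t$ term bounds $|u(t,q+h)-u(t,q)|$ linearly in $\|h\|$.

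The comparison principle itself --- if $u$ is a subsolution and $v$ a supersolution of \eqref{HJ} on $(s,T)\times\rr^d$, both of at most linear growth, with $u(s,\cdot)\leq v(s,\cdot)$, then $u\leq v$ on $[s,T)\times\rr^d$ --- is obtained by the usual doubling of variables, an application of the Jensen--Ishii lemma, and absorption of the error terms using the two Lipschitz estimates above, with a penalization at infinity handling the unbounded domain. Applied to a solution against itself it gives uniqueness, so $V^t_s$ is well defined; applied to the solutions issued from $u_0\leq v_0$ it gives the monotonicity of $V^t_s$. Finally, \eqref{HJ} involves $u$ only through $\dd_t u$ and $\dd_q u$, so adding a constant $c$ to the datum adds $c$ to the solution: $V^t_s(u_0+c)=V^t_s u_0+c$. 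Hence if $u_0,v_0$ are Lipschitz with $m:=\|u_0-v_0\|_\infty<\infty$, then $u_0-m\leq v_0\leq u_0+m$, and monotonicity gives $V^t_s u_0-m\leq V^t_s v_0\leq V^t_s u_0+m$, which is the claimed contraction.

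The point I expect to require the most care is the comparison principle on the \emph{unbounded} domain $\rr^d$ for \emph{unbounded} (linearly growing) solutions: the penalization at infinity has to be matched to the $C(1+\|p\|)$ growth of $H$ so that the doubling estimate closes, and one must separately check, via a bootstrap on short time intervals, that the spatial Lipschitz constant of $V^t_s u_0$ stays finite on finite time intervals. Both are classical, but this is where Hypothesis \ref{esti} is genuinely used; the remainder is bookkeeping on top of \cite{guide}.
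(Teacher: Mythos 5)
Your proposal is correct and is essentially the argument the paper intends: Proposition \ref{visc} is stated there without proof, as a direct consequence of the Crandall--Ishii--Lions theory of \cite{guide}, and your outline (structure conditions extracted from Hypothesis \ref{esti}, Perron's method with linear-growth barriers, comparison by doubling of variables on the unbounded domain, and the contraction obtained from monotonicity plus additivity of constants) is the standard derivation from that theory. The only step needing care is the propagation of the spatial Lipschitz constant, where the perturbation $C(1+\|p\|)\|h\|$ cannot simply be absorbed into a constant but must be closed by a Gronwall/bootstrap argument --- a point you correctly flag yourself.
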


In dimension $1$, the theory of viscosity solutions of the \eqref{HJ} equation is the counterpart of the theory of entropy solutions for conservation laws: if $p(t,q)=\dd_q u(t,q)$ and $u$ satisfies \eqref{HJ},
\[\dd_t p(t,q) + \dd_q(H(t,q,p(t,q)))=0.\]
The following entropy condition, first proposed by O. Oleinik in \cite{oleinik} for conservation laws, gives a geometric criterion to decide if a function solves the \eqref{HJ} equation in the viscosity sense at a point of shock. It is proved for example in \cite{kos} (Theorem $2.2$) in the modern viscosity terms, as a direct application of Theorem $1.3$ in \cite{CEL}. We give the statement for $H$ integrable, \emph{i.e.} which depends only on $p$.
\begin{defi}[Oleinik's entropy condition]\label{oleinik} Let $H:\rr \to \rr$ be a $\cc^2$ Hamiltonian. If $(p_1,p_2)\in \rr^2$, we say that \emph{Oleinik's entropy condition}
	is (strictly) satisfied between $p_1$ and $p_2$ if 
	\[H(\mu p_1+(1-\mu)p_2) \overset{(<)}{\leq} \mu H(p_1)+(1-\mu)H(p_2) \,\;\, \forall \mu \in (0,1),\]
	\emph{i.e.}  if and only if the graph of $H$ lies (strictly) under the cord joining $(p_1,H(p_1))$ and $(p_2,H(p_2))$.
	
	We say that the \emph{Lax condition} is (strictly) satisfied if 
	\[ H'(p_1)(p_2-p_1) \overset{(<)}{\leq} {H(p_2)-H(p_1)} \overset{(<)}{\leq} H'(p_2)(p_2-p_1),\]
	which is implied by the entropy condition.
\end{defi}
\begin{prop}\label{RH} Let $u=\min(f_1,f_2)$ on an open neighbourhood $U$ of $(t,q)$ in $\rr_+ \times \rr$, with $f_1$ and $f_2$ $\cc^1$ solutions on $U$ of the Hamilton--Jacobi equation \eqref{HJ}. Let $p_1$ and $p_2$ denote respectively $\dd_q f_1(t,q)$ and $\dd_q f_2(t,q)$. If $f_1(t,q)=f_2(t,q)$, then $u$ is a viscosity solution at $(t,q)$ if and only if the {entropy condition} is satisfied between $p_1$ and $p_2$.
\end{prop}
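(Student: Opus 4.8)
The plan is to separate the two one-sided viscosity conditions at $(t,q)$ and to compute the superdifferential $D^+u(t,q)$ explicitly.

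\medskip\noindent\emph{Reduction to the subsolution condition.} The first step is to notice that $u=\min(f_1,f_2)$ is automatically a supersolution at $(t,q)$, by the classical fact that a minimum of supersolutions is a supersolution: if $\phi\in\cc^1$ and $u-\phi$ has a local minimum at $(t,q)$, then since $f_1(t,q)=f_2(t,q)=u(t,q)$ the difference $f_1-\phi$ also has a local minimum there, and $f_1$ being a classical solution gives $\dd_t\phi(t,q)+H(\dd_q\phi(t,q))=\dd_t f_1(t,q)+H(\dd_q f_1(t,q))=0$, so the supersolution inequality holds. Hence $u$ is a viscosity solution at $(t,q)$ if and only if it is a subsolution there, and by the standard superdifferential reformulation of the subsolution condition (see \cite{guide}) this is equivalent to $a+H(\xi)\leq 0$ for every $(a,\xi)\in D^+u(t,q)$, where $D^+u(t,q)$ is the set of $(a,\xi)\in\rr\times\rr$ with $u(s,y)\leq u(t,q)+a(s-t)+\xi(y-q)+o(|s-t|+|y-q|)$.

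\medskip\noindent\emph{Computing $D^+u(t,q)$.} The heart of the argument is to show that $D^+u(t,q)$ is exactly the segment $[\nabla f_1(t,q),\nabla f_2(t,q)]$ in $\rr\times\rr$. The inclusion $\supseteq$ is immediate, since $u\leq f_i$ with equality at $(t,q)$ forces $\nabla f_i(t,q)\in D^+u(t,q)$ and $D^+u(t,q)$ is convex. For $\subseteq$, assume $p_1\neq p_2$ (if $p_1=p_2$ then $f_1$ and $f_2$ share the differential $(-H(p_1),p_1)$ at $(t,q)$, whence $u$ is differentiable there with that same differential, $D^+u(t,q)$ is a single point, and both the viscosity condition and the trivial entropy condition $H(p_1)\leq H(p_1)$ hold). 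When $p_1\neq p_2$ one has $\nabla(f_1-f_2)(t,q)=(H(p_2)-H(p_1),\,p_1-p_2)\neq 0$, so $\{f_1=f_2\}$ is locally a $\cc^1$ curve through $(t,q)$ and $u$ coincides with $f_1$ on the open side $\{f_1<f_2\}$ and with $f_2$ on the other. Restricting the inequality defining $D^+u(t,q)$ to points approaching $(t,q)$ from within $\{f_1<f_2\}$, where $u=f_1$ is $\cc^1$, shows that the linear functional $w\mapsto\big((a,\xi)-\nabla f_1(t,q)\big)\cdot w$ is nonnegative on the half-plane $\{w:\nabla(f_1-f_2)(t,q)\cdot w\leq 0\}$, hence $(a,\xi)=(1-c)\nabla f_1(t,q)+c\,\nabla f_2(t,q)$ for some $c\geq 0$; the symmetric computation from $\{f_2<f_1\}$ forces $c\in[0,1]$.

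\medskip\noindent\emph{Conclusion.} Since each $f_i$ solves \eqref{HJ} one has $\nabla f_i(t,q)=(-H(p_i),p_i)$, so the condition ``$a+H(\xi)\leq 0$ on $D^+u(t,q)$'' becomes, with $\xi=(1-\mu)p_1+\mu p_2$ and $a=-(1-\mu)H(p_1)-\mu H(p_2)$,
\[
H\big((1-\mu)p_1+\mu p_2\big)\leq(1-\mu)H(p_1)+\mu H(p_2)\qquad\text{for all }\mu\in[0,1],
\]
which is precisely Oleinik's entropy condition between $p_1$ and $p_2$ of Definition \ref{oleinik}. Combined with the first step, this gives the claimed equivalence. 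I expect the only genuinely delicate point to be the inclusion $\subseteq$ in the computation of $D^+u(t,q)$, which uses the local structure of the shock set as a $\cc^1$ curve when $p_1\neq p_2$; the remainder is bookkeeping.
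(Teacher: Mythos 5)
Your proof is correct. Note that the paper does not supply its own argument for Proposition \ref{RH}: it only cites \cite{kos} (Theorem $2.2$) and \cite{CEL} (Theorem $1.3$), so there is no internal proof to compare against; what you wrote is a clean, self-contained version of the standard argument underlying those references. The decomposition is the expected one: the supersolution half is free because a minimum of classical (hence viscosity) supersolutions touching at the point is a supersolution, and the subsolution half reduces, via the superdifferential characterization, to checking $a+H(\xi)\leq 0$ on $D^+u(t,q)$. The one genuinely substantive step is your identification $D^+u(t,q)=[\nabla f_1(t,q),\nabla f_2(t,q)]$, and it is handled correctly: the inclusion $\supseteq$ from $u\leq f_i$ plus convexity, the degenerate case $p_1=p_2$ (where $u$ is differentiable and both sides of the equivalence are trivially true), and for $p_1\neq p_2$ the observation that $\nabla(f_1-f_2)(t,q)=(H(p_2)-H(p_1),p_1-p_2)\neq 0$ makes the shock set a $\cc^1$ hypersurface, so that testing the superdifferential inequality along directions entering each open region pins $(a,\xi)-\nabla f_1(t,q)$ down to a nonpositive multiple of $\nabla(f_1-f_2)(t,q)$ and the two sides together force the convex combination. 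The final translation, using $\nabla f_i(t,q)=(-H(p_i),p_i)$, turns the subsolution inequality on the segment into exactly the convexity-of-the-chord inequality of Definition \ref{oleinik}. The only points I would make explicit in a written version are the two small lemmas you invoke implicitly: that a linear form nonnegative on a half-plane $\{n\cdot w\leq 0\}$ is a nonpositive multiple of $n$, and the equivalence between the test-function and superdifferential formulations of the subsolution property; both are standard and neither hides a gap.
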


Oleinik's entropy condition is also valid in higher dimensions (for shock along a smooth hypersurface), see Theorem $3.1$ in \cite{izukos}, and can be generalized when $u$ is the minimum of more than two functions, see \cite{bernard2}.
\subsection{Variational solutions}\label{introvar}
If $u_0$ is a $\cc^1$ initial condition, the \emph{wavefront} associated with the Cauchy problem for $u_0$ is denoted by $\mathcal{F}_{u_0}$ and defined by \begin{displaymath} \tag{F} \label{defwf}
\mathcal{F}_{u_0}=\left\{\left(t,q,u_0(q_0)+ \aaa^t_0(\gamma)\right) \left|\begin{array}{c}
t\geq 0, q\in \rr^d,\\
p_0 = d u_0(q_0),\\
Q^t_0(q_0,p_0)=q.
\end{array} \right\}\right.
\end{displaymath}
In terms of wavefront, the method of characteristics explained in \S \ref{introclass} states that if $u$ is a $\cc^2$ solution on $[0,T]\times \rr^d$, the restrictions on $[0,T]\times \rr^d$ of the graph of $u$ and of the wavefront coincide.

If $u_0$ is $\cc^1$, we will call a \emph{variational solution} of the Cauchy problem associated with $u_0$ a continuous function whose graph is included in the wavefront $\ff_{u_0}$, \emph{i.e.} a continuous function $g:[0,T]\times \rr^d$ such that for all $(t,q)$ in $[0,\infty)\times \rr^d$, there exists $(q_0,p_0)$ such that $p_0 = d_{q_0} u_0$, $Q^t_0(q_0,p_0)=q$ and  \[g(t,q)=u_0(q_0)+ \aaa^t_0(\gamma),\]
where $\gamma$ denotes the Hamiltonian trajectory issued from $(q_0,p_0)$.

A family of operators $\left(R^t_s\right)_{s \leq t}$ mapping $\cc^{0,1}(\rr^d)$  into itself is called a \textit{variational operator} if it satisfies the following conditions:
\begin{enumerate}
	\item \label{monot} Monotonicity: if $u\leq v$ are Lipschitz on $\rr^d$, then $R^t_s u \leq R^t_s v$ on $\rr^d$ for each $s \leq t$,
	\item \label{addi} Additivity: if $u$ is Lipschitz on $\rr^d$ and $c\in\rr$, then $R^t_s (c+u) =c + R^t_s u$, 
	\item \label{var} Variational property:  for each $\cc^1$ Lipschitz function $u_s$, $q$ in $\rr^d$ and $s\leq t$, there exists $(q_s,p_s)$ such that $p_s = d_{q_s} u_s$, $Q^t_s(q_s,p_s)=q$ and  \[R^t_s u_s(q)=u_s(q_s)+ \aaa^t_s(\gamma),\]
	where  $\gamma$ denotes the Hamiltonian trajectory issued from $(q(s),p(s))=(q_s,p_s)$. \end{enumerate}

There may be more than one variational solutions associated with a Cauchy problem. We will see in Proposition \ref{varmin} that the monotonicity assumption made on the variational operator is a step towards more uniqueness.

\begin{rem}\label{contiop} If a family of operators $R$ satisfies  \eqref{monot} and \eqref{addi}, and if $u$ and $v$ are two Lipschitz functions on $\rr^d$ with bounded difference, then \begin{displaymath}
	\|R^t_s u -R^t_s v\|_\infty \leq \|u-v\|_\infty.\end{displaymath}
	As a consequence, for all $s\leq t$, $R^t_s$ is a weak contraction, and it is continuous for the uniform norm.
\end{rem}

\subsubsection*{Existence and local estimates}
The existence of such a variational operator is given by the method of Sikorav and Chaperon, see \cite{viterboX}. It is possible to obtain localized estimates on this family of variational operators that are also valid for the viscosity operator (in fact, they are obtained for the viscosity operator by a limit iterating process, see \cite{wei}). They are stated explicitly for integrable Hamiltonians in \cite{roos1}, Addendum $2.26$. 
\begin{prop}\label{locesti} There exists a family of variational operators $(R^t_{s,H})_H$ such that if $H(p)$ and $\tilde{H}(p)$ are two integrable Hamiltonians with bounded second derivatives, then for $0\leq s \leq t$ and $u$  $L$-Lipschitz,		\begin{itemize}\renewcommand{\labelitemi}{$\bullet$}
		\item $\|R^t_{s,\tilde{H}} u -R^t_{s,H}u\|_\infty \leq (t-s)\|\tilde{H}-H\|_{\bar{B}\!\left(0,L\right)},$
		\item $\|V^t_{s,\tilde{H}} u -V^t_{s,H}u\|_\infty \leq (t-s)\|\tilde{H}-H\|_{\bar{B}\!\left(0,L\right)}.$
	\end{itemize}
	where $\bar{B}\!\left(0,L\right)$ denotes the closed ball of radius $L$ centered in $0$ and $\|f\|_K:= sup_K |f|$.
\end{prop}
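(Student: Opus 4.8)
The plan is to take for $(R^t_{s,H})_H$ the Chaperon--Sikorav family of \cite{viterboX}, for which the \emph{global} analogue of the estimate is standard: over a common fine subdivision $s=t_0<\dots<t_N=t$, the generating families quadratic at infinity attached to $H$ and $\tilde H$ differ in uniform norm by at most $\sum_i(t_{i+1}-t_i)\|\tilde H-H\|_\infty=(t-s)\|\tilde H-H\|_\infty$, and the minmax is $1$-Lipschitz for the uniform norm on generating families with a common quadratic part, so $\|R^t_{s,\tilde H}u-R^t_{s,H}u\|_\infty\le(t-s)\|\tilde H-H\|_\infty$; the same bound for $V^t_{s,H}u$ follows by writing it as the limit of the iterated operators $R^{t}_{t_{N-1},H}\circ\cdots\circ R^{t_1}_{s,H}$ over refining subdivisions (see \cite{wei}) and telescoping, each $R$ being a weak contraction (Remark \ref{contiop}). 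It then remains to replace $\|\tilde H-H\|_\infty$ by $\|\tilde H-H\|_{\bar B(0,L)}$ when the datum $u$ is $L$-Lipschitz.

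The first ingredient is an a priori bound: for integrable $H$, both $R^t_{s,H}$ and $V^t_{s,H}$ map $L$-Lipschitz functions to $L$-Lipschitz functions. Indeed these operators commute with space translations of $\rr^d$ — for $V$ by the uniqueness in Proposition \ref{visc}, and the Chaperon--Sikorav $R$ is built from the flow $\phi^t_s(q,p)=(q+(t-s)\nabla H(p),p)$, which commutes with $q$-translations — so monotonicity, additivity and $u(\cdot+a)\le u+L|a|$ give $R^t_{s,H}u(\cdot+a)\le R^t_{s,H}u+L|a|$ for all $a$, i.e. $R^t_{s,H}u$ is $L$-Lipschitz, and likewise for $V$.

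The second ingredient is localization: if $H$ and $H^\sharp$ are integrable with bounded second derivatives and $H=H^\sharp$ on $\bar B(0,L)$, then $R^t_{s,H}u=R^t_{s,H^\sharp}u$ and $V^t_{s,H}u=V^t_{s,H^\sharp}u$ for every $L$-Lipschitz $u$. For $V$: if $w=V^t_{s,H}u$ and $w-\phi$ has a local maximum at $(r,q)$ for a $\cc^1$ test function $\phi$, then $d_q\phi(r,q)$ lies in the superdifferential of the $L$-Lipschitz map $w(r,\cdot)$ at $q$, so $|d_q\phi(r,q)|\le L$ and $H(d_q\phi(r,q))=H^\sharp(d_q\phi(r,q))$; the subsolution inequality for $H$ is thus identical to the one for $H^\sharp$, the supersolution case is symmetric, and uniqueness gives $w=V^t_{s,H^\sharp}u$. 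For $R$, first take $u$ also $\cc^1$: the Lagrangian $\phi^t_{s,H}(\mathrm{gr}(du))$ is contained in $\{|p|\le L\}$ (the graph of $du$ is, and the integrable flow preserves the fibrewise modulus), hence equals $\phi^t_{s,H^\sharp}(\mathrm{gr}(du))$; since $R^t_{s,H}u$ is the minmax of any generating family quadratic at infinity of that Lagrangian, and such a minmax depends only on the Lagrangian, $R^t_{s,H}u=R^t_{s,H^\sharp}u$; this extends to all $L$-Lipschitz $u$ by density of $\cc^{1,1}$ functions and the continuity of $R^t_s$ (Remark \ref{contiop}).

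Finally, fix $\delta>0$ and set $m=\|\tilde H-H\|_{\bar B(0,L)}$. An elementary, if slightly technical, $\cc^2$ gluing produces integrable Hamiltonians $H^\sharp$, $\tilde H^\sharp$ with bounded second derivatives, coinciding with $H$, $\tilde H$ on $\bar B(0,L)$, and satisfying $\|\tilde H^\sharp-H^\sharp\|_\infty\le m+\delta$: one extends the difference $g=\tilde H-H$ — which on $\bar B(0,L)$ has modulus $\le m$ and gradient of modulus $\le K$ for some finite $K$, the second derivative being bounded on the compact ball — outside the ball by a radial turn-around of curvature $O(K^2/\delta)$ glued to $g$ across a thin annulus, where $g$ and the turn-around agree to first order so that the cutoff contributes only a bounded second derivative, then flattened to a constant, and one adds the result to $H$. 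Localization then gives $R^t_{s,H}u=R^t_{s,H^\sharp}u$ and $R^t_{s,\tilde H}u=R^t_{s,\tilde H^\sharp}u$, whence $\|R^t_{s,\tilde H}u-R^t_{s,H}u\|_\infty\le(t-s)\|\tilde H^\sharp-H^\sharp\|_\infty\le(t-s)(m+\delta)$; letting $\delta\to0$ gives the first bullet, and the argument with $V$ in place of $R$ gives the second. I expect the localization step to be the only genuine obstacle — confining the effective momenta to $\bar B(0,L)$ while keeping the modified Hamiltonians admissible, which is precisely what forces the auxiliary $\delta$ and the passage to the limit; the rest is bookkeeping.
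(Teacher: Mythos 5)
The paper itself does not prove Proposition \ref{locesti}: it quotes it from \cite{roos1} (Addendum 2.26), the viscosity half being deduced from the iterated-limit construction of \cite{wei}. The proof behind that citation is a first-variation argument: along the interpolation $H_\lambda=H+\lambda(\tilde H-H)$ the minmax $\lambda\mapsto R^t_{s,H_\lambda}u(q)$ is Lipschitz, its derivative for a.e.\ $\lambda$ equals the partial derivative of the generating family at a critical point realizing the minmax, and for an integrable Hamiltonian the critical points are broken trajectories of constant momentum $p_0\in\dd u(q_0)\subset\bar B(0,L)$, so that this derivative is bounded by $(t-s)\|\tilde H-H\|_{\bar B(0,L)}$. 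Your route is genuinely different and correct in outline: you keep only the crude global bound $(t-s)\|\tilde H-H\|_\infty$ and restore the localization through (a) propagation of the Lipschitz constant, (b) a ``finite speed of propagation in momentum'' lemma ($H=H^\sharp$ on $\bar B(0,L)$ forces $R_Hu=R_{H^\sharp}u$ and $V_Hu=V_{H^\sharp}u$ for $L$-Lipschitz $u$), and (c) a cutoff of $\tilde H-H$ outside the ball. What this buys is a lemma of independent interest in (b) and a treatment in which $R$ and $V$ are handled by parallel localization arguments; what it costs is that step (b) for $R$ silently rests on the uniqueness theorem for generating families quadratic at infinity (Viterbo, Th\'eret), and there you must match not only the Lagrangians $\phi^t_{s,H}({\rm graph}(du))=\phi^t_{s,H^\sharp}({\rm graph}(du))$ but also their action primitives (they do agree, since $(t-s)(p\cdot\nabla H(p)-H(p))$ is the same for both on $\bar B(0,L)$), so that the two GFQIs have the same normalized minmax; the first-variation argument needs none of this. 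Two simplifications are available: in (c) it suffices to multiply $g=\tilde H-H$ by a smooth cutoff supported in $\bar B(0,L+\delta/K)$, with $K$ a bound on $|\nabla g|$ near the ball, which already yields $\|\chi g\|_\infty\le m+\delta$ with bounded second derivative and spares the ``radial turn-around''; and the global viscosity estimate does not require \cite{wei}, since $(r,q)\mapsto V^r_{s,H}u(q)\pm(r-s)\|\tilde H-H\|_\infty$ are a super/subsolution for $\tilde H$ and the comparison principle behind Proposition \ref{visc} concludes directly.
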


\subsection{Extension to nonsmooth initial data}
\subsubsection*{Lipschitz initial data}
We will denote by $\dd u(q)$ the \emph{Clarke derivative} of a function $u:\rr^d\to \rr$ at a point $q\in \rr^d$. If $u$ is Lipschitz, it is the convex envelop of the set of reachable derivatives: \[\dd u(q)= \mathrm{co}\left(\left\{\lim_{n\to \infty} d u(q_n), q_n \underset{n\to \infty}{\to} q, q_n\in dom(du)\right\}\right).\] It is the singleton $\{du(q)\}$ if $u$ is $\cc^1$ on a neighbourhood of $q$. Variational property \eqref{var} can be extended to Lipschitz initial condition with the help of this generalized derivative.

\begin{prop}\label{extvar} If $R^t_s$ is a variational operator, for each Lipschitz function $u_s$, $q$ in $\rr^d$ and $s\leq t$, there exists $(q_s,p_s)$ such that $p_s \in \dd_{q_s} u_s$, $Q^t_s(q_s,p_s)=q$ and if $\gamma$ denotes the Hamiltonian trajectory issued from $(q(s),p(s))=(q_s,p_s)$, \[R^t_s u_s(q)=u_s(q_s)+ \aaa^t_s(\gamma).\]
\end{prop}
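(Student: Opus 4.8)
The plan is to approximate $u_s$ by smooth functions, apply the variational property \eqref{var} to each approximant, and pass to the limit using the weak contraction of Remark \ref{contiop} together with a compactness argument for the associated characteristics. Concretely, fix a mollifier $\rho\in\cc^\infty$ supported in the unit ball with $\int\rho=1$, set $\rho_n(x)=n^d\rho(nx)$ and $u_n:=u_s*\rho_n$. Each $u_n$ is $\cc^\infty$; if $u_s$ is $L$-Lipschitz then, by Rademacher's theorem, $du_s$ exists almost everywhere and $du_n=du_s*\rho_n$ takes values in the closed ball $\bar B(0,L)$, so $u_n$ is $L$-Lipschitz; and $u_n\to u_s$ uniformly on $\rr^d$. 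Applying the variational property \eqref{var} to the $\cc^1$ Lipschitz function $u_n$ yields $(q_n,p_n)$ with $p_n=du_n(q_n)\in\bar B(0,L)$, $Q^t_s(q_n,p_n)=q$, and, writing $\gamma_n$ for the Hamiltonian trajectory issued from $(q_n,p_n)$,
\[ R^t_s u_n(q)=u_n(q_n)+\aaa^t_s(\gamma_n). \]

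Next I would establish compactness of $(q_n,p_n)$. The momenta are already confined to $\bar B(0,L)$. Along any Hamiltonian trajectory, Hypothesis \ref{esti} gives $\|\dot p(\tau)\|\leq C(1+\|p(\tau)\|)$, so by Gr\"onwall the momentum stays in a ball whose radius depends only on $C$, $L$ and $t-s$; feeding this back into $\|\dot q(\tau)\|\leq C(1+\|p(\tau)\|)$ bounds $\|q(t)-q(s)\|$ by a constant $\Lambda=\Lambda(C,L,t-s)$, so $q_n$ lies in $\bar B(q,\Lambda)$. Hence $(q_n,p_n)$ ranges in a fixed compact set, and up to extraction $(q_n,p_n)\to(q_s,p_s)$ with $p_s\in\bar B(0,L)$. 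Passing to the limit is then routine: continuity of the flow gives $Q^t_s(q_s,p_s)=q$; continuous dependence of solutions of \eqref{HS} on initial conditions (recall $H$ is $\cc^2$) and continuity of the action functional give $\aaa^t_s(\gamma_n)\to\aaa^t_s(\gamma)$, where $\gamma$ is the trajectory from $(q_s,p_s)$; uniform convergence $u_n\to u_s$ with $q_n\to q_s$ gives $u_n(q_n)\to u_s(q_s)$; and by Remark \ref{contiop} the weak contraction of $R^t_s$ gives $R^t_s u_n(q)\to R^t_s u_s(q)$. Therefore $R^t_s u_s(q)=u_s(q_s)+\aaa^t_s(\gamma)$.

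The one genuinely delicate point is checking $p_s\in\dd_{q_s}u_s$. Here I would use the description $\dd_{q_s}u_s=\bigcap_{\ep>0}\overline{\mathrm{co}}\{du_s(y):y\in B(q_s,\ep)\cap dom(du_s)\}$. Fix $\ep>0$; for $n$ large one has $q_n\in B(q_s,\ep/2)$ and $1/n<\ep/2$, hence the support of $y\mapsto\rho_n(q_n-y)$ is contained in $B(q_s,\ep)$, so $p_n=du_n(q_n)=\int du_s(q_n-y)\rho_n(y)\,dy$ is an average of points of the closed convex set $\overline{\mathrm{co}}\{du_s(z):z\in B(q_s,\ep)\cap dom(du_s)\}$ (over the $y$ in the unit ball for which $q_n-y\in dom(du_s)$, which is a full-measure set), and therefore belongs to it. Letting $n\to\infty$ and using that this set is closed gives $p_s\in\overline{\mathrm{co}}\{du_s(z):z\in B(q_s,\ep)\cap dom(du_s)\}$; since $\ep$ was arbitrary, $p_s\in\dd_{q_s}u_s$. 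I expect this Clarke-derivative step to demand the most care, chiefly in handling the almost-everywhere-defined gradient $du_s$ correctly inside the convolution; the compactness and limit-passage parts are standard ODE and uniform-convergence arguments.
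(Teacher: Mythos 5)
The paper does not prove this statement in-text: it simply defers to \cite{mathz}, Proposition $1.22$. Your mollification argument is a correct, self-contained substitute, and it is essentially the same approximation strategy that reference (and Section \ref{u0C2} of this paper, via Theorem \ref{rifczarn}) relies on: approximate $u_s$ by smooth equi-Lipschitz functions, apply property \eqref{var}, use the weak contraction of Remark \ref{contiop} to pass to the limit in the values, and use compactness of the initial data of the characteristics to pass to the limit in the trajectories. Two points deserve explicit justification rather than assertion. First, the compactness of $(q_n,p_n)$ does require the quantitative part of Hypothesis \ref{esti} (or at least completeness of the flow with linear growth of the vector field); this is the standing assumption of the paper, so your Gr\"onwall bound is legitimate, but it should be flagged as the place where the hypothesis on $H$ enters. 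Second, your identification $\dd_{q_s}u_s=\bigcap_{\ep>0}\overline{\mathrm{co}}\{du_s(y):y\in B(q_s,\ep)\cap \mathrm{dom}(du_s)\}$ is equivalent to the paper's definition of the Clarke derivative as the convex hull of reachable limits of gradients (this is standard for Lipschitz functions --- a Carath\'eodory argument on the decreasing family of compact sets, cf.\ Clarke's Theorem $2.5.1$ --- but it is not the stated definition, so cite or prove it). Granting those two points, the barycenter argument $p_n=\int du_s(q_n-y)\rho_n(y)\,dy\in\overline{\mathrm{co}}\{du_s(z):z\in B(q_s,\ep)\}$ is a clean way to get $p_s\in\dd_{q_s}u_s$; it is arguably tidier than the alternative of invoking Hausdorff convergence of the derivative graphs as in Theorem \ref{rifczarn}, at the cost of being tied to the convolution approximation. (A cosmetic slip: the domain of integration in that formula is $B(0,1/n)$, not the unit ball.)
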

The proof of this proposition can be found in \cite{mathz}, Proposition $1.22$.
%
%

If $u_0$ is a Lipschitz initial condition, the generalized wavefront associated with the Cauchy problem for $u_0$ is still denoted by $\mathcal{F}_{u_0}$ and defined by: \begin{displaymath} \tag{F'} \label{defwf2}
\mathcal{F}_{u_0}=\left\{\left(t,q,u_0(q_0)+ \aaa^t_0(\gamma)\right) \left|\begin{array}{c}
t\geq 0, q\in \rr^d,\\
p_0 \in \dd u_0(q_0),\\
Q^t_0(q_0,p_0)=q.
\end{array} \right\}\right.
\end{displaymath}
Proposition \ref{extvar} implies that a variational operator applied to $u_0$ gives a continuous section of the wavefront $\mathcal{F}_{u_0}$. We will still call variational solution a Lipschitz function whose graph is contained in the generalized wavefront.

\subsubsection*{Semiconcave initial data}
A function $u:\rr^d \to \rr$ is \emph{$B$-semiconcave} if $q \mapsto u(q)-\frac{B}{2}\|q\|^2$ is concave. The function $u$ is \emph{semiconcave} if there exists $B$ for which $u$ is $B$-semiconcave.

The following theorem states that if $u_0$ is a $B$-semiconcave function, a variational operator is given by the minimal section of the wavefront $\mathcal{F}_{u_0}$ for a duration depending only on $B$ and on the constant $C$ of Hypothesis \ref{esti} related to the Hamiltonian.

\begin{prop}\label{varmin}
	If $R^t_s$ is a variational operator and if $u_0$ is a Lipschitz $B$-semiconcave initial condition  for some $B>0$, then there exists $T>0$ depending only on $C$ and $B$ such that for all $(t,q)$ in $[0,T]\times \rr^d$, \begin{equation}
	R^t_0 u_0(q)=\inf\left\{S|(t,q,S)\in \mathcal{F}_{u_0}\right\}= \inf\left\{ u_0(q_0)+ \aaa^t_0(\gamma)\left|\begin{array}{c}
	(q_0,p_0)\in \rr^d\times  \rr^d, \\
	p_0 \in \dd u_0(q_0),\\
	Q^t_0(q_0,p_0)=q.
	\end{array} \right\}\right.
	\end{equation}
	where $\gamma$ denotes the Hamiltonian trajectory issued from $(q(0),p(0))=(q_0,p_0)$.
	
	Moreover if $H$ is integrable (\emph{i.e.} depends only on $p$), we can choose $T=1/BC$.
	\end{prop}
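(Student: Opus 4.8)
The plan is to prove the two inequalities making up the equality, the easy one first. By Proposition~\ref{extvar} applied to $u_0$ there is a pair $(q_0^\star,p_0^\star)$ with $p_0^\star\in\dd_{q_0^\star}u_0$, $Q^t_0(q_0^\star,p_0^\star)=q$, and $R^t_0u_0(q)=u_0(q_0^\star)+\aaa^t_0(\gamma^\star)$, so the number $R^t_0u_0(q)$ already belongs to the fibre of the generalized wavefront $\ff_{u_0}$ over $(t,q)$; and by the very definition \eqref{defwf2}, that fibre is exactly $\{u_0(q_0)+\aaa^t_0(\gamma)\mid p_0\in\dd u_0(q_0),\ Q^t_0(q_0,p_0)=q\}$. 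Hence the two infima in the statement coincide, and $R^t_0u_0(q)\geq\inf\{S\mid(t,q,S)\in\ff_{u_0}\}$. So everything reduces to the reverse bound: for \emph{every} admissible triple one should get $R^t_0u_0(q)\leq u_0(q_0)+\aaa^t_0(\gamma)$.

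Fix such a triple, i.e.\ $(q_0,p_0)$ with $p_0\in\dd u_0(q_0)$ and $Q^t_0(q_0,p_0)=q$, and let $\gamma$ be the Hamiltonian trajectory it issues. The idea is to squeeze $u_0$ from above by a smooth initial datum touching it at $q_0$ with slope $p_0$, for which the Cauchy problem is still classically solvable up to time $T$, and then invoke monotonicity. Since $u_0$ is $B$-semiconcave, $q\mapsto u_0(q)-\tfrac B2\|q\|^2$ is concave with $p_0-Bq_0$ a supporting slope at $q_0$; rewriting, the osculating paraboloid $\Pi(q):=u_0(q_0)+p_0\cdot(q-q_0)+\tfrac B2\|q-q_0\|^2$ satisfies $\Pi\geq u_0$ on $\rr^d$, $\Pi(q_0)=u_0(q_0)$, $d\Pi(q_0)=p_0$, $\dd^2\Pi\equiv B\,\mathrm{Id}$. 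As $\Pi$ is not Lipschitz, I would modify it outside a ball of radius of order $L/B$ about $q_0$, where $L$ is a Lipschitz constant of $u_0$: replacing the quadratic radial profile $r\mapsto\tfrac B2 r^2$ by one with bounded derivative and second derivative still in $[-B,B]$ (so that the tangential Hessian eigenvalue $f'(r)/r$, which is automatically $\leq B$ once $f''\leq B$ and $f'(0)=0$, stays controlled), one gets a function $w=w_{q_0}$ that is globally Lipschitz, of class $\cc^2$, with $\|\dd^2 w\|\leq B$, that still satisfies $w\geq u_0$ on $\rr^d$ — here the Lipschitz (hence at most linear) growth of $u_0$ is exactly what is needed — and that still agrees with $\Pi$ near $q_0$, so $w(q_0)=u_0(q_0)$ and $dw(q_0)=p_0$.

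Let $T$ be the time furnished by the method of characteristics (\S\ref{introclass}) for $\cc^2$ data with second derivative bounded by $B$; it depends only on $B$ and the constant $C$ of Hypothesis~\ref{esti}, and equals $1/BC$ when $H$ is integrable, as the explicit flow $\phi^t_0(q,p)=(q+t\nabla H(p),p)$ shows ($\phi^t_0(gr(dw))$ is a graph as long as $t\,\|\dd^2 H\|\,\|\dd^2 w\|<1$). For $t\leq T$ the Cauchy problem with datum $w$ has a $\cc^2$ solution on $[0,T]\times\rr^d$, so by the method of characteristics the restriction of $\ff_{w}$ to $[0,T]\times\rr^d$ is the graph of that solution — a genuine graph. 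By the variational property \eqref{var} applied to $w$, the value $R^t_0 w(q)$ lies in the fibre of $\ff_{w}$ over $(t,q)$; but that fibre is a single point, and the Hamiltonian trajectory issued from $(q_0,p_0)=(q_0,dw(q_0))$ realises a point of it with value $w(q_0)+\aaa^t_0(\gamma)=u_0(q_0)+\aaa^t_0(\gamma)$, whence $R^t_0 w(q)=u_0(q_0)+\aaa^t_0(\gamma)$. Finally, monotonicity \eqref{monot} and $w\geq u_0$ give $R^t_0u_0(q)\leq R^t_0 w(q)=u_0(q_0)+\aaa^t_0(\gamma)$; taking the infimum over all admissible $(q_0,p_0)$ yields $R^t_0u_0(q)\leq\inf\{S\mid(t,q,S)\in\ff_{u_0}\}$, which together with the first paragraph proves the asserted equalities, with $T=1/BC$ in the integrable case.

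I expect the only genuinely technical point to be the construction of the comparison datum $w$: one must cut off the osculating paraboloid to make it Lipschitz without ever destroying the global inequality $w\geq u_0$ (which forces a cut-off whose radial profile still outgrows the linearly–growing $u_0$, and is where the Lipschitz hypothesis on $u_0$ is used) nor the bound $\|\dd^2 w\|\leq B$ uniformly in $q_0$ (which is what keeps the characteristics time $T$ — and hence $T=1/BC$ in the integrable case — independent of the touching point $q_0$). Everything else — monotonicity, the variational property, and the classical method of characteristics — enters only softly.
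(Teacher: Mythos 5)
Your proof is correct and follows essentially the same route as the paper: bound $R^t_0u_0(q)$ from below via Proposition~\ref{extvar}, and from above by touching $u_0$ from above at $(q_0,p_0)$ with a $\cc^2$ Lipschitz majorant whose second derivative is bounded by $B$, then combining the method of characteristics with monotonicity. The only difference is that the paper obtains the touching majorant by citing Lemma~\ref{lem} (Bernard's family $F$ of $\cc^2$ equi-Lipschitz functions), whereas you construct it by hand as a truncated osculating paraboloid — which is in effect a self-contained proof of the part of that lemma actually used here.
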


This theorem implies on one hand that for a semiconcave initial condition, the minimal section of the wavefront is continuous for small time. On the other hand, it yields that the variational operator gives in that case a variational solution which is less or equal than any other variational solutions on $[0,T]\times \rr^d$.

\begin{exe} In dimension $1$, if $u_0(q)=-|q|$ and if the Hamiltonian is integrable and has the shape of Figure \ref{fig} left, the wavefront at time $t$ has the shape of Figure \ref{fig} right and its minimal section, thickened on the figure, gives the value of $R^t_0 u_0$ above each point $q$.  On this example, there are five different variational solutions, but only the minimal one is given by a variational operator.
	\begin{figure}[h]
		\begin{minipage}{\textwidth}
			\begin{center}
				\def\svgwidth{0.40\columnwidth} 
				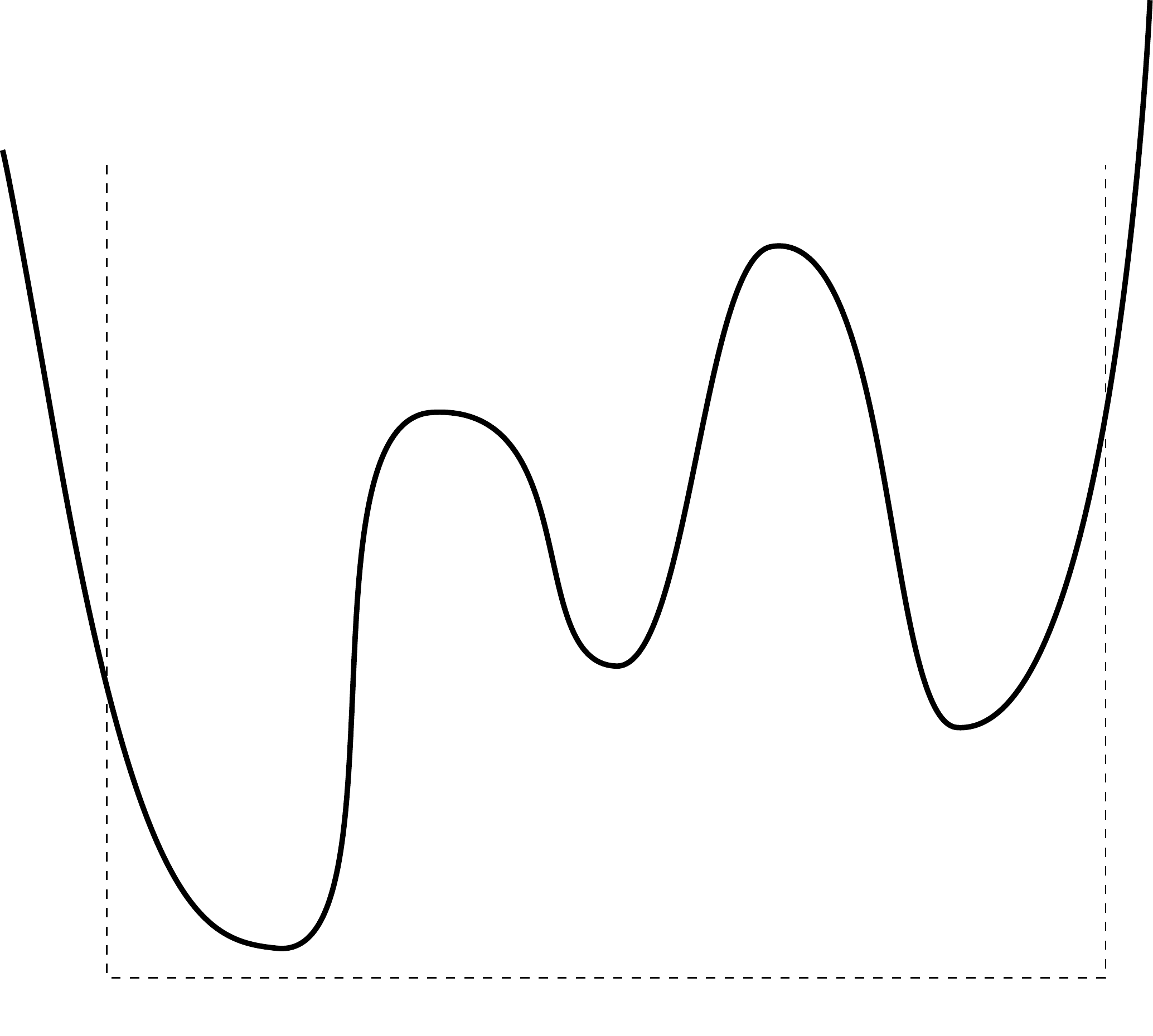 \hskip 1cm
				\def\svgwidth{0.50\columnwidth} 
				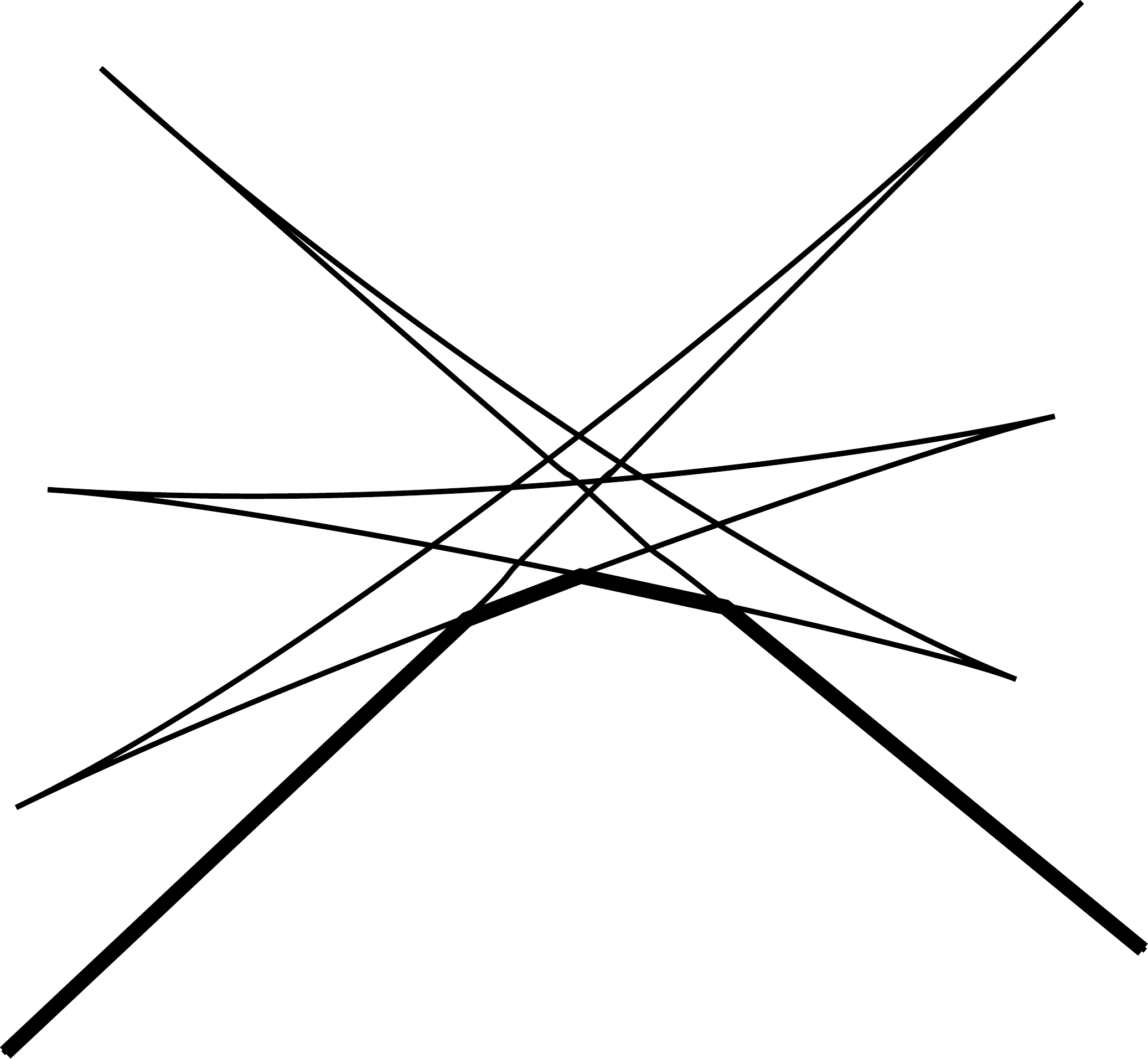				
			\end{center}
			\caption{Left: graph of $H$. Right: cross-section of the wavefront $\mathcal{F}_{u_0}$ at time $t$.}
			\label{fig}
		\end{minipage}
	\end{figure}
\end{exe}
An analogous argument to the one proving Proposition \ref{varmin} gives a first element of comparison between viscosity and variational solutions in the semiconcave framework. It is originally due to P. Bernard, see \cite{bernard2}.
\begin{prop}\label{v<v} Let $H$ be a Hamiltonian satisfying Hypothesis \ref{esti} with constant $C$.
	If $R^t_s$ is a variational operator and $u_0$ is a Lipschitz $B$-semiconcave initial condition for some $B>0$, then there exists $T>0$ depending only on $C$ and $B$ such that \[V^t_0 u_0\leq R^t_0 u_0.\]	
	for all $0\leq t \leq T$. Consequently, the viscosity solution is smaller than any variational solution on
	$[0,T] \times \rr^d$.
	
	Moreover if $H$ is integrable, we can choose $T=1/BC$.
\end{prop}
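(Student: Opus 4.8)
The plan is to follow the pattern of Proposition \ref{varmin}: decompose the $B$-semiconcave datum $u_0$ as an infimum of smooth functions whose Hessian is bounded by $B$, for which the method of characteristics produces classical solutions on $[0,T]$, and then chase the desired inequality through the monotonicity of $V$ and of $R$. I argue in the integrable case with $T=1/BC$; the general case is identical, with $T=T(B,C)$ the time furnished by \S\ref{introclass} and Hypothesis \ref{esti} guaranteeing that $V$ and $R$ are well defined.

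First, since $q\mapsto u_0(q)-\frac{B}{2}\|q\|^2$ is concave, for every $q_0\in\rr^d$ and every $p_0\in\dd u_0(q_0)$ the quadratic
\[
\psi_{q_0,p_0}(q):=u_0(q_0)+p_0\cdot(q-q_0)+\frac{B}{2}\|q-q_0\|^2
\]
satisfies $\psi_{q_0,p_0}\ge u_0$, with equality and slope $p_0$ at $q_0$ (this inequality is exactly where $B$-semiconcavity is used), and consequently $u_0=\inf_{q_0\in\rr^d,\ p_0\in\dd u_0(q_0)}\psi_{q_0,p_0}$. Each $\psi_{q_0,p_0}$ is $\cc^\infty$ with $\dd^2\psi_{q_0,p_0}\equiv B\,\mathrm{Id}$. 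They are only quadratically bounded; but since $u_0$ is Lipschitz one may, outside a large ball chosen in terms of the point under consideration, replace each $\psi_{q_0,p_0}$ by a smooth Lipschitz function still dominating $u_0$, still with Hessian $\le B$, and unchanged along the characteristics issued for times $\le T$ — this routine truncation (which could also be bypassed by invoking the comparison principle for $\cc^2$ super-solutions) makes them admissible data for $V$ and $R$ without affecting the short-time solutions, and I keep denoting them $\psi_{q_0,p_0}$.

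For $0\le t\le T=1/BC$, the method of characteristics of \S\ref{introclass} gives, for each $\psi_{q_0,p_0}$, a unique $\cc^2$ solution of \eqref{HJ}; being a classical solution it equals $V^t_0\psi_{q_0,p_0}$ by uniqueness (Proposition \ref{visc}), and it equals $R^t_0\psi_{q_0,p_0}$ as well, since by the variational property \eqref{var} the latter is a continuous section of the wavefront $\ff_{\psi_{q_0,p_0}}$, which on $[0,T]\times\rr^d$ coincides with the graph of that classical solution. Now fix $(t,q)$ with $0\le t\le T$. On the one hand, $u_0\le\psi_{q_0,p_0}$ for all $(q_0,p_0)$, so monotonicity of the viscosity operator (Proposition \ref{visc}) gives $V^t_0u_0(q)\le\inf_{(q_0,p_0)}V^t_0\psi_{q_0,p_0}(q)$. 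On the other hand, Proposition \ref{extvar} applied to $u_0$ furnishes $q_0^\star$ and $p_0^\star\in\dd u_0(q_0^\star)$ with $Q^t_0(q_0^\star,p_0^\star)=q$ and $R^t_0u_0(q)=u_0(q_0^\star)+\aaa^t_0(\gamma^\star)$, where $\gamma^\star$ is the trajectory issued from $(q_0^\star,p_0^\star)$. Take $\psi:=\psi_{q_0^\star,p_0^\star}$: it is tangent to $u_0$ at $q_0^\star$ with $\dd\psi(q_0^\star)=p_0^\star$, so $(q_0^\star,p_0^\star)$ is a pair with momentum $\dd\psi(q_0^\star)$ sent to $q$ by $Q^t_0$; by the uniqueness in the method of characteristics it is \emph{the} one computing $V^t_0\psi(q)$, whence
\begin{align*}
\inf_{(q_0,p_0)}V^t_0\psi_{q_0,p_0}(q)\ \le\ V^t_0\psi(q)&=\psi(q_0^\star)+\aaa^t_0(\gamma^\star)\\
&=u_0(q_0^\star)+\aaa^t_0(\gamma^\star)\ =\ R^t_0u_0(q).
\end{align*}
Combining the two inequalities yields $V^t_0u_0(q)\le R^t_0u_0(q)$ for $0\le t\le T$, with $T=1/BC$ when $H$ is integrable.

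For the last assertion, Proposition \ref{varmin} identifies $R^t_0u_0$ on $[0,T]\times\rr^d$ with the minimal section of $\ff_{u_0}$, which lies below any variational solution of the Cauchy problem for $u_0$; together with $V^t_0u_0\le R^t_0u_0$ this shows the viscosity solution is $\le$ every variational solution on $[0,T]\times\rr^d$. I expect the only delicate points to be the Lipschitz truncation of the quadratic supports and the elementary fact — true precisely because $u_0$ is semiconcave — that each $p_0\in\dd u_0(q_0)$ is the slope at $q_0$ of a support $\psi_{q_0,p_0}\ge u_0$; everything else is bookkeeping with monotonicity and the classical method of characteristics.
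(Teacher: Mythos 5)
Your proof is correct and follows essentially the same route as the paper's: dominate $u_0$ from above by smooth functions with Hessian bounded by $B$, tangent to $u_0$ at each point of the graph of $\dd u_0$, identify the viscosity value of such a function with the classical solution from the method of characteristics, and conclude by monotonicity of $V^t_0$ applied to the support function at the point $(q_0^\star,p_0^\star)$ furnished by Proposition \ref{extvar}. The only difference is that where the paper invokes Lemma \ref{lem} to produce the family of $\cc^2$ equi-Lipschitz upper bounds, you construct them explicitly as truncated quadratics, which is precisely the content of that lemma.
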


The article is organized as follows: Section \ref{proofvarmin} is independent from the rest, where we prove Propositions \ref{varmin} and \ref{v<v} for any Hamiltonian satisfying Hypothesis \ref{esti}. The rest of the article deals with integrable Hamiltonians: in Section \ref{proofjoukrec} we prove Corollary \ref{joukrecsc} which is a Lipschitz version of Theorem \ref{joukrec}. It is the corollary of Proposition \ref{joukrecsc1}, stated in terms of semiconcave initial conditions, which is proved by reduction to one or two-dimensional considerations, contained in Propositions \ref{joukrecdim1} and \ref{contrex}. In Section \ref{dim1} we study the case of the dimension $1$ and prove Proposition \ref{joukrecdim1}. 
In Section \ref{ce2} we study an example for the saddle Hamiltonian in dimension $2$ in order to prove Proposition \ref{contrex}. In Section \ref{u0C2} we deduce Theorem \ref{joukrec} from its Lipschitz counterpart Corollary \ref{joukrecsc} by approximation.

\section{Nonsmooth version of Theorem \ref{joukrec}} \label{proofjoukrec}
In this section we prove the following proposition, from which we deduce Corollary \ref{joukrecsc} which is the nonsmooth counterpart of Theorem \ref{joukrec}.
\begin{prop}\label{joukrecsc1}
	If $p\mapsto H(p)$ is a neither convex nor concave integrable Hamiltonian with second derivative bounded by $C$,  there exist $B>0$ and a Lipschitz $B$-semiconcave initial condition $u_0$ such that the variational solution given by the minimal section of the wavefront does not solve \eqref{HJ} in the viscosity sense at some point $(t,q)$ of $[0,1/BC]\times\rr^d$.
\end{prop}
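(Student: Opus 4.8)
The plan is to reduce the $d$-dimensional statement to the key one- and two-dimensional examples and then paste them together using separation of variables. Since $p\mapsto H(p)$ is neither convex nor concave, there is a point $p^\star$ where the Hessian $\dd^2 H(p^\star)$ is neither positive nor negative semidefinite. The first step is a linear change of coordinates: after an affine substitution in $p$ (and a corresponding linear change in $q$, which does not affect the viscosity/variational dichotomy since it conjugates both operators equivariantly), we may assume $p^\star=0$ and that $\dd^2 H(0)$ is diagonal with at least one positive and at least one negative eigenvalue, or else (the degenerate subcase) has a zero eigenvalue together with a nonzero one of a definite sign; in the latter case the third-order jet supplies the required non-monotonicity of $H'$ along a line. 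Concretely, I would isolate a $2$-plane $\Pi$ through $0$ on which the restriction $H|_\Pi$ is a genuine saddle — up to an error controlled by the bounded third derivatives on a small ball — so that Proposition \ref{contrex} applies; or, when only a one-dimensional degeneracy can be extracted, a line $\ell$ on which $H|_\ell$ has the shape treated in Proposition \ref{joukrecdim1}.

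The second step is the \emph{tensorization} argument. Working in coordinates $q=(q',q'')\in\rr^k\times\rr^{d-k}$ adapted so that the interesting behaviour lives on the first factor ($k=1$ or $2$), I would choose $u_0(q)=v_0(q')-\tfrac{B}{2}\|q''\|^2$, where $v_0$ is the semiconcave initial datum produced by Proposition \ref{joukrecdim1} or \ref{contrex}, and $B$ is large enough that $u_0$ is $B$-semiconcave. For an integrable Hamiltonian the flow is $\phi^t_0(q,p)=(q+t\nabla H(p),p)$, so the wavefront $\ff_{u_0}$ — and hence, on $[0,1/BC]$, its minimal section, which by Proposition \ref{varmin} is exactly $R^t_0u_0$ — splits: the $q'$-dynamics see only $\dd_{q'}H$ evaluated at the relevant momenta, and for a carefully chosen $u_0$ the optimal $p''$ stays near $0$ so that the $q''$-part contributes a smooth (indeed quadratic-in-time) term that is irrelevant to the entropy computation. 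Thus a shock of $R^t_0u_0$ at a point $(t,q)$ along the hypersurface $\{q'=\text{const}\}$ reduces, via Proposition \ref{RH} (and its higher-dimensional form from \cite{izukos}, \cite{bernard2}), to checking Oleinik's entropy condition for the one- or two-dimensional model, where it is violated by construction. The main obstacle here is controlling the true Hamiltonian $H$ versus its model $H|_\Pi\otimes(\text{quadratic})$: I would use the local estimates of Proposition \ref{locesti} — that $\|V^t_{s,\tilde H}u-V^t_{s,H}u\|_\infty$ and the analogous variational quantity are bounded by $(t-s)\|\tilde H-H\|$ on the ball of radius $L=\operatorname{Lip}(u_0)$ — to argue that replacing $H$ by a compactly-supported modification $\tilde H$ agreeing with the model near $0$ changes both solutions by an amount $o(1)$ that cannot restore the entropy inequality, which is \emph{strictly} violated in the model.

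The expected hard part is precisely this perturbative/approximation bookkeeping: ensuring that (i) the modification of $H$ away from the small ball is legitimate (bounded second derivative preserved, and the relevant characteristics never leave the ball on $[0,1/BC]$, which forces a smallness constraint tying together the ball radius, $\operatorname{Lip}(u_0)$ and $B$), and (ii) the contribution of the transverse $q''$ variables is genuinely inert — one must verify that the minimizing trajectory in the $\inf$ defining $R^t_0u_0$ indeed has $p''_0$ at the vertex of the concave $q''$-part, so that no spurious crossing in the $q''$-directions interferes. Granting Propositions \ref{joukrecdim1} and \ref{contrex}, the remaining steps — the linear normalization, the choice of $B$, and the reduction of the entropy check to the model — are then routine, and assembling them yields the desired point $(t,q)\in[0,1/BC]\times\rr^d$ at which the minimal section fails to be a viscosity solution, proving Proposition \ref{joukrecsc1}.
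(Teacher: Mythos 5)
Your overall architecture (reduce to the one- and two-dimensional models, perturb, lift back to $\rr^d$) is the one the paper follows, but two steps as written contain genuine gaps. First, your opening dichotomy is false: a neither convex nor concave $\cc^2$ function need \emph{not} have any point $p^\star$ at which the Hessian is indefinite. Take $H(p_1,\dots,p_d)=f(p_1)$ with $f$ neither convex nor concave: the Hessian has rank at most one everywhere, so at every point it is either positive or negative semidefinite, yet there is no single ``degenerate point'' whose higher jet rescues you --- and since $H$ is only assumed $\cc^2$, your appeal to ``the third-order jet'' and to ``bounded third derivatives'' is not available anyway. The correct statement (Proposition \ref{ncnc}) is a global dichotomy: either some Hessian is indefinite, or $H$ is neither convex nor concave along some straight line; its proof requires choosing a point with Hessian in $S^+_n\setminus\{0\}$ and one with Hessian in $S^-_n\setminus\{0\}$ so that the connecting direction avoids both kernels (Lemma \ref{cavexlem}). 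This is a real argument, not a normalization, and your proposal does not supply it.

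Second, your tensorization $u_0(q)=v_0(q')-\tfrac{B}{2}\|q''\|^2$ does not make the transverse variables inert. With that choice the initial transverse momentum is $p''_0=-Bq''_0$, which sweeps out all of $\rr^{d-k}$, and since $H(p',p'')$ does not split as a sum, characteristics issued from different $q''_0$ see genuinely different effective Hamiltonians in $p'$; the claim that ``the optimal $p''$ stays near $0$'' in the infimum defining the minimal section is exactly what you would have to prove, and nothing in your setup forces it. The clean fix is to take the transverse part \emph{linear}, $v_0(q_1,q_2)=u_0(q_1)+p_2\cdot q_2$, which pins $p''_0=p_2$ identically and makes the reduction an exact identity for both operators (Proposition \ref{propred}-\eqref{red}); the affine conjugation of Proposition \ref{propred}-\eqref{afftrans} then absorbs the normalization of the saddle. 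Your final perturbation step is morally the paper's: the paper implements your ``$o(1)$ versus a strict gap'' idea by the explicit rescaling $\bar H_\ep(p)=\ep^{-2}\bar H(\ep p)=p_1p_2+\ep^{-2}\bar K(\ep p)$ with $\ep^{-2}\bar K(\ep p)\to 0$ uniformly on $\bar B(0,L)$, which avoids the circular smallness bookkeeping you correctly identify as the danger (shrinking the ball also shrinks the gap furnished by Proposition \ref{contrex}); note also that for integrable $H$ the momenta are constant along characteristics, so your worry about trajectories leaving the ball is vacuous once Proposition \ref{locesti} is invoked on $\bar B(0,L)$.
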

\begin{cor}\label{joukrecsc}
	If  $p\mapsto H(p)$ is a neither convex nor concave integrable Hamiltonian with bounded second derivative, there exists a Lipschitz initial condition $u_0$ such that the graph of the viscosity solution associated with $u_0$ is not included in the wavefront $\ff_{u_0}$.
\end{cor}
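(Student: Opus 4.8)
The plan is to deduce Corollary \ref{joukrecsc} from Proposition \ref{joukrecsc1} by a purely formal sandwiching argument, squeezing the viscosity solution between the minimal section of the wavefront and itself. First I would invoke Proposition \ref{joukrecsc1} to produce a constant $B>0$, a Lipschitz $B$-semiconcave initial condition $u_0$, and a point $(t_0,q_0)$ at which the minimal section $g(t,q):=\inf\{S\mid (t,q,S)\in\ff_{u_0}\}$ of the wavefront fails to solve \eqref{HJ} in the viscosity sense; since the viscosity property is only meaningful at interior times, necessarily $t_0>0$, and I will take $(t_0,q_0)\in(0,1/BC)\times\rr^d$. Because $H$ is integrable with second derivative bounded by $C$, a variational operator $R^t_s$ exists (Proposition \ref{locesti}), and Proposition \ref{varmin} applied with $T=1/BC$ identifies $g=R^t_0u_0$ on the whole of $[0,1/BC]\times\rr^d$; in particular $g$ is a genuine variational solution, i.e. a continuous section of $\ff_{u_0}$.

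Next I would argue by contradiction: assume the graph of the viscosity solution $t\mapsto V^t_0u_0$ is contained in the generalized wavefront $\ff_{u_0}$. Then $(t,q,V^t_0u_0(q))\in\ff_{u_0}$ for every $(t,q)$, so by definition of the infimum $V^t_0u_0(q)\ge g(t,q)$; restricted to $t\le 1/BC$ this reads $V^t_0u_0\ge R^t_0u_0$. On the other hand, Proposition \ref{v<v} (again with $T=1/BC$, using integrability of $H$) gives the reverse inequality $V^t_0u_0\le R^t_0u_0$ on $[0,1/BC]\times\rr^d$. Hence $V^t_0u_0=g$ on $[0,1/BC]\times\rr^d$.

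Finally, since $t\mapsto V^t_0u_0$ is a viscosity solution of \eqref{HJ} on the open set $(0,\infty)\times\rr^d$ and coincides with $g$ on the open set $(0,1/BC)\times\rr^d$, the function $g$ is a viscosity solution of \eqref{HJ} there as well; evaluated at $(t_0,q_0)$ this contradicts the defining property of that point. Therefore the graph of $V^\cdot_0u_0$ is not contained in $\ff_{u_0}$, which is precisely the assertion of Corollary \ref{joukrecsc}.

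I expect essentially no obstacle in this deduction itself — the three ingredients (existence of a variational operator, Proposition \ref{varmin}, and Proposition \ref{v<v}) combine mechanically, the only mild care being to ensure the failure point from Proposition \ref{joukrecsc1} lies strictly inside $(0,1/BC)$ so that all three results apply simultaneously at it. The genuine difficulty is hidden upstream, in Proposition \ref{joukrecsc1}: it is there that one must actually build a semiconcave initial condition witnessing the discrepancy, via the one-dimensional analysis of Proposition \ref{joukrecdim1}, the two-dimensional saddle example of Proposition \ref{contrex}, and the reduction/approximation machinery tying them together.
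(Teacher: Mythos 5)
Your proposal is correct and follows essentially the same route as the paper: both arguments sandwich the viscosity solution below the minimal section of the wavefront via Propositions \ref{varmin} and \ref{v<v}, and conclude from Proposition \ref{joukrecsc1} that equality is impossible, so the graph must drop strictly below the wavefront somewhere. Your phrasing as a contradiction (inclusion in the wavefront would force $V^t_0u_0\geq$ the minimal section, hence equality, hence the minimal section would be a viscosity solution) is just a reorganization of the paper's direct argument, and your remark about taking the failure point with $t_0<1/BC$ is a legitimate (and easily satisfied) precaution.
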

\begin{proof}[Proof of Corollary \ref{joukrecsc}]
	Take a $B$-semiconcave initial condition $u_0$ as in Proposition \ref{joukrecsc1}. If $C$ is a bound on $d^2H$, Proposition \ref{varmin} states on one hand that the minimal section of the wavefront coincides with a variational solution on $[0,1/BC] \times \rr^d$, and on the other hand Proposition \ref{v<v} gives that on the same set, the viscosity solution associated with $u_0$ is less or equal than any variational solution. As a consequence the graph of the viscosity solution lies below the wavefront, and cannot coincide with the minimal section by Proposition \ref{joukrecsc1}. Hence there is a point of $[0,1/BC]\times\rr^d$ above which the graph of the viscosity solution lies strictly below the wavefront. 
\end{proof}

The outline of the proof of Proposition \ref{joukrecsc1} is the following: we give the statements in dimension $1$ (Proposition \ref{joukrecdim1}) and for $H(p_1,p_2)=p_1p_2$ (Proposition \ref{contrex}), and then reduce the situation to the first case or to an  approximation of the second case. Proposition \ref{ncnc} gives in that purpose a characterization of neither convex nor concave functions, and Proposition \ref{propred} deals with the effect on the variational and viscosity operators of an affine transformation or dimensional reduction of the Hamiltonian.

\begin{prop}[One-dimensional case]\label{joukrecdim1}
	If $H:\rr \to \rr$ is a neither convex nor concave integrable Hamiltonian with bounded second derivative, there exists $\delta>0$ and a semiconcave Lipschitz initial condition $u_0$ such that \[R^t_{0,H} u_0\neq V^t_{0,H} u_0 \,\;\,\; \forall t<\delta.\]
\end{prop}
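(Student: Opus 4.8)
The plan is to exploit the fact that a neither convex nor concave $H$ must have, somewhere, an inflection point $p^\star$ with $H''$ changing sign, so that near $p^\star$ the graph of $H$ crosses one of its tangent lines. After subtracting an affine function (which, by the reduction arguments invoked later via Proposition \ref{propred}, does not affect the non-coincidence we want), we may assume $H(p^\star)=0$, $H'(p^\star)=0$, and that on one side of $p^\star$ the Oleinik entropy condition fails for a suitable pair $(p_1,p_2)$ straddling $p^\star$ — i.e.\ the graph of $H$ rises strictly above some chord. Concretely, pick $p_1<p^\star<p_2$ with $H(p_1)=H(p_2)$ (possible by the inflection, shrinking the interval as needed) and such that $H$ lies strictly above the horizontal chord between them on $(p_1,p_2)$, hence Oleinik's condition is \emph{strictly violated} between $p_1$ and $p_2$.

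Next I would build $u_0$ as a Lipschitz semiconcave function whose Clarke derivative at the single nonsmooth point, say $q=0$, is exactly the interval $[\min(p_1,p_2),\max(p_1,p_2)]$ — for instance $u_0(q)=p_1 q$ for $q\le 0$ and $u_0(q)=p_2 q$ for $q\ge 0$ when $p_1>p_2$ (a concave corner, hence $B$-semiconcave for every $B>0$; if $p_1<p_2$ one instead puts the corner in a region where $u_0$ is smoothed so as to keep semiconcavity, or simply applies $q\mapsto -q$ together with $p\mapsto -p$, again covered by the reduction lemma). Away from $0$, $u_0$ is affine with slope $p_1$ or $p_2$, so the characteristics emanating from the two half-lines are explicit: from slope $p_i$ the foot point $q_0$ is carried to $q_0+tH'(p_i)$ with value $u_0(q_0)+t\bigl(p_iH'(p_i)-H(p_i)\bigr)$, and from the corner emanate the whole fan of characteristics with momenta $p\in[p_1,p_2]\cap[p_2,p_1]$. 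One then computes, for $t<\delta$ small, the minimal section $R^t_{0,H}u_0$ of the wavefront over the point $q=0$: it is the infimum over $p$ in that interval of $t(pH'(p)-H(p))$, which is a known Legendre-type quantity.

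The crux is then a local comparison near $(t,0)$. By construction the candidate minimal-section solution looks locally like $\min(f_1,f_2)$ where $f_i$ is the classical solution built from slope $p_i$, with $f_1(t,0)=f_2(t,0)$ and $\dd_q f_i(t,0)=p_i$. Proposition \ref{RH} says this $\min$ is a viscosity solution at $(t,0)$ \emph{iff} Oleinik's entropy condition holds between $p_1$ and $p_2$ — which we arranged to \emph{fail} strictly. Hence $R^t_{0,H}u_0$ is not a viscosity solution there, while by uniqueness (Proposition \ref{visc}) $V^t_{0,H}u_0$ is; so the two differ. The main obstacle is bookkeeping: one must (i) verify that over $q=0$ the minimal wavefront value is genuinely attained at the "outer" momenta $p_1,p_2$ and that the local picture really is the two-function minimum of Proposition \ref{RH} (i.e.\ the fan of intermediate characteristics from the corner does not undercut it and no characteristic from a third branch interferes), which uses the strict failure of Oleinik and forces $\delta$ small; and (ii) handle, via Proposition \ref{propred}, the affine normalization and the possible orientation flip so that the $B$-semiconcavity hypothesis and the sign conventions of Definition \ref{oleinik} are met simultaneously — this is where the cited references \cite{chenciner}, \cite{izukos} for the one-dimensional example do the heavy lifting.
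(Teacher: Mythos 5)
Your construction breaks at its central step. If the graph of $H$ lies strictly \emph{above} the chord joining $(p_1,H(p_1))$ and $(p_2,H(p_2))$ (Oleinik violated) and $u_0$ is the concave corner at $0$ whose Clarke derivative is the interval between $p_1$ and $p_2$, with $H(p_1)=H(p_2)=h$, then the fan issued from the corner \emph{does} undercut $\min(f_1,f_2)$: over $q=0$ the fan contributes the value $-tH(p)$ at every interior critical point $p$ of $H$ (such points exist in the open interval, e.g.\ the interior maximum of $H$, since $H$ rises above the horizontal chord and returns to it), and there $H(p)>h$ gives $-tH(p)<-th=\min(f_1,f_2)(t,0)$. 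So the minimal section is not locally a minimum of the two classical solutions built from the corner slopes, and Proposition \ref{RH} cannot be invoked at the momenta $p_1,p_2$. Worse, for a purely piecewise-affine concave corner one can check that the minimal section of the wavefront over any $q$ equals $\min_{p}\bigl(pq-tH(p)\bigr)$, the minimum taken over the Clarke interval (every wavefront value over $q$ is of this form for some admissible $p$, and the minimum of this function of $p$ is attained either at an interior critical point, i.e.\ a fan point, or at an endpoint satisfying the corresponding branch constraint); this is exactly the Hopf formula for concave initial data, which is the viscosity solution for arbitrary continuous $H$. Hence for your $u_0$ one gets $R^t_{0,H}u_0=V^t_{0,H}u_0$ and no counterexample. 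Your verification item (i) has the logic reversed: it is the strict \emph{satisfaction} of Oleinik between the corner slopes that keeps the fan above the two-branch minimum, not its failure.

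The paper works in the opposite regime. Lemma \ref{tec} produces $p_1<p_2$ between which the entropy condition is strictly \emph{satisfied}, but with the Lax condition degenerate at $p_2$ (the tangent slope $H'(p_2)$ equals the chord slope) and strict at $p_1$; after affine normalization ($H(\pm1)=H'(1)=0$, $H<0$ on $(-1,1)$, $H''(1)<0$, $H'(-1)<0$) one takes $u_0(q)=-|q|+f(q)$ with $f$ strictly convex on $\rr_+$ and $f(0)=f'(0)=0$. The condition $H<0$ on $(-1,1)$ keeps the fan above the corner section, so the wavefront has a \emph{unique} continuous section, but the strictly convex perturbation forces its shock to occur between the fan and the right arm at intermediate momenta $(u_0'(q_t),p_t)$, where the identity $t\bigl(H(p_t)-H(u_0'(q_t))-(p_t-u_0'(q_t))H'(p_t)\bigr)=q_tf'(q_t)-f(q_t)>0$ shows that the Lax (hence Oleinik) condition fails. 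Neither the degenerate-tangency choice of $p_1,p_2$ nor the perturbation $f$ is optional; without them the variational and viscosity solutions coincide.
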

Note that $\delta$ will be small enough so that $R^t_{0,H} u_0$ is uniquely defined, by Proposition \ref{varmin}.
This proposition is proved in \S \ref{contrex1}, and is really based on the example in dimension $1$ known at least since \cite{chenciner}. In contrast, the following two-dimensional example is the main novelty of this work.
\begin{prop}[Saddle Hamiltonian]\label{contrex} If $H(p_1,p_2)=p_1p_2$, for all $L>0$, there exists a $L$-Lipschitz, $L$-semiconcave initial condition $u_0$ such that   \[R^t_{0,H} u_0\neq V^t_{0,H} u_0 \,\;\,\; \forall t<1/2L.\]
\end{prop}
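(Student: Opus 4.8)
The plan is to exhibit an explicit initial condition for which the minimal section of the wavefront carries, at some point of a shock, a pair of momenta violating Oleinik's entropy condition for $H(p_1,p_2)=p_1p_2$. Since the flow of an integrable Hamiltonian is $\phi^t_0(q,p)=(q+t\nabla H(p),p)$ with $\nabla H(p_1,p_2)=(p_2,p_1)$, and the action along a characteristic is $\aaa^t_0(\gamma)=t(p\cdot\nabla H(p)-H(p))=t\,p_1p_2$, the wavefront is completely explicit: above $(t,q)$ it consists of the values $u_0(q_0)+t\,p_0^{(1)}p_0^{(2)}$ over all $q_0$ with $p_0\in\dd u_0(q_0)$ and $q_0+t(p_0^{(2)},p_0^{(1)})=q$. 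First I would choose $u_0$ so that the characteristics emanating from two well-separated regions of the plane both focus onto a common point $q$ at time $t$, with the two momenta $p_1=(a,b)$ and $p_2=(c,d)$ chosen so that the \emph{Clarke derivative} of $u_0$ on the shock set interpolates between them but the function $\mu\mapsto H(\mu p_1+(1-\mu)p_2)$ fails to lie under the chord; concretely one wants $(a-c)(b-d)<0$ so that $p_1p_2$ as a function along the segment $[p_1,p_2]$ is \emph{strictly concave up on part and down on part}, and in particular sits strictly \emph{above} the chord somewhere, which is exactly the negation of the entropy condition.

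A clean way to realize this is to build $u_0$ as a minimum of finitely many affine (or mildly quadratic, to ensure $L$-semiconcavity) functions whose gradients are precisely the momenta $p_1,p_2$ (and perhaps auxiliary ones to keep $u_0$ globally $L$-Lipschitz and $L$-semiconcave), arranged so that the two pieces with gradients $p_1$ and $p_2$ both achieve the minimum at the same spatial point $q$ after transport by $\phi^t_0$. Because $u_0$ is $L$-semiconcave, Proposition \ref{varmin} with $T=1/BC$; here $C=\|d^2H\|=1$ (the Hessian of $p_1p_2$ is the constant swap matrix, of norm $1$) and $B=L$, so $T=1/L$, guarantees that on $[0,1/L)$ — in particular on $[0,1/2L)$ — the variational operator $R^t_0u_0$ equals the minimal section of $\ff_{u_0}$ and is a bona fide variational solution. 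It remains to check that this minimal section, at the constructed shock point, is \emph{not} a viscosity solution: near that point $R^t_0u_0$ is locally the minimum of two smooth classical solutions $f_1,f_2$ (the transported affine pieces), $f_1=f_2$ at $(t,q)$, and by Proposition \ref{RH} it solves \eqref{HJ} in the viscosity sense there iff the entropy condition holds between $p_1$ and $p_2$ — which we have arranged to fail. One must also ensure that the minimum of the other pieces does not interfere at that point, i.e. that the other branches of the wavefront lie strictly above, which is a matter of choosing the geometry (the separation of the source regions and the affine constants) generously; and one must double-check the bookkeeping $1/2L < 1/L$ so the conclusion is stated on the safe interval, with the factor $2$ giving slack for the approximation step invoked later in Proposition \ref{joukrecsc1}.

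The main obstacle, I expect, is the \emph{simultaneous} satisfaction of three constraints: (i) $u_0$ globally $L$-Lipschitz \emph{and} $L$-semiconcave — semiconcavity forces $u_0$ to be a minimum (not maximum) of smooth pieces, which constrains how gradients can be arranged; (ii) the two chosen branches genuinely cross at a single point $(t,q)$ for \emph{all} $t<1/2L$ (not just one $t$), which for the integrable flow means the spatial focusing condition $q_0^{(1)}+t(p_2^{(1)}-p_1^{(1)})\cdot(\text{something})$ must hold along a whole time interval — this typically forces the two source points to coincide or to lie on a line tuned to $\nabla H$, so one likely takes $u_0$ to have a nonsmooth ridge whose Clarke derivative at a point is the full segment $[p_1,p_2]$ and uses the extended variational property (Proposition \ref{extvar}) rather than literally two separated smooth pieces; and (iii) the entropy violation: with $H=p_1p_2$ the chord condition reduces, after parametrizing $p(\mu)=\mu p_1+(1-\mu)p_2$, to the sign of a single quadratic in $\mu$ with leading coefficient $-(a-c)(b-d)$, so one needs $(a-c)(b-d)<0$, i.e. the two gradient differences point in "opposite quadrants" — reconciling this with the semiconcavity/Lipschitz geometry is the delicate part, and I anticipate the bulk of the argument is an explicit low-complexity construction (a handful of affine functions, or one tent plus a parabola cap) together with a short verification of these inequalities, after which Propositions \ref{varmin}, \ref{extvar} and \ref{RH} close the argument immediately.
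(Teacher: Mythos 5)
Your outline identifies the right tools (Propositions \ref{varmin}, \ref{extvar}, \ref{RH}) but stops short of an actual construction, and the construction you sketch — two pieces with gradients $p_1,p_2$ chosen so that the entropy condition fails \emph{between the gradients of $u_0$ at its ridge} — would not work. By Proposition \ref{varmin}, $R^t_0u_0$ is the infimum over the \emph{whole} wavefront, including the branches issued from the interior of the Clarke segment $[p_1,p_2]$ along the ridge, and these are exactly what undercut your shock. Concretely, take $u_0=\min(\ell_1,\ell_2)$ with $\ell_i(q)=p_i\cdot q+c_i$ affine and let $f_i(t,q)=\ell_i(q)-tH(p_i)$ be the transported smooth branches. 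Since $H$ is quadratic, the branch issued from a ridge point with momentum $p=p_1+s(p_2-p_1)$ contributes, above $q=q_0+t\nabla H(p)$, the value
\begin{equation*}
\ell_1(q_0)+t\bigl(p\cdot\nabla H(p)-H(p)\bigr)=f_1(t,q)+\tfrac{t}{2}\,d^2H\,(p-p_1)\cdot(p-p_1)=f_1(t,q)+t\,s^2\,(p_2-p_1)^{(1)}(p_2-p_1)^{(2)},
\end{equation*}
and solving the focusing constraint shows that above each point of the shock $\{f_1=f_2\}$ the admissible interior parameter is $s=1/2$. Your entropy-violation requirement is precisely $(p_2-p_1)^{(1)}(p_2-p_1)^{(2)}<0$, so under that very hypothesis the minimal section lies \emph{strictly below} $\min(f_1,f_2)$ at the shock: the interior of the Clarke segment fans out and resolves the would-be entropy-violating shock. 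Then $R^t_0u_0$ is not locally the minimum of the two smooth solutions $f_1,f_2$, and Proposition \ref{RH} (which is moreover stated only in dimension $1$) cannot be applied to them. This is the essential difficulty of the two-dimensional example, not a bookkeeping issue to be fixed by "choosing the geometry generously."

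The paper's proof takes a genuinely different route precisely to avoid this. It sets $u_0=\min_{c\in[a,b]}c(f(q_1)-q_2)$ with $f(q_1)=q_1^2$ near the origin, so that the Clarke segment at the ridge is $\{c(f'(q_1),-1):c\in[a,b]\}$, along which the entropy condition \emph{holds} at $t=0$. Proposition \ref{minf} identifies $R^t_0u_0$ with $\min_{c}u_c(t,\cdot)$ over the whole one-parameter family of classical solutions; a concavity-in-$c$ computation shows the minimum is attained at the endpoints $c=a,b$ on the region $q_1<-\tfrac{3b}{2}t$; and the entropy violation is created \emph{dynamically}, at the evolved shock $\{u_a=u_b\}$ where the transported momenta $a(2(q_1+at),-1)$ and $b(2(q_1+bt),-1)$ are no longer collinear in the relevant sense. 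The failure of the subsolution property is then checked directly with the test function $\tfrac12(u_a+u_b)$ rather than via Proposition \ref{RH}. Some mechanism of this kind — curvature of the ridge together with a violation that only appears after transport, plus control of the interior branches — is needed; your proposal supplies none of it.
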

Note that $R^t_{0,H} u_0$ is uniquely defined when $t<1/2L$ by Proposition \ref{varmin}.
This proposition is proved in Section \ref{ce2}, where we explicit a suitable initial condition for which the wavefront has a single continuous section with a shock denying the entropy condition.

The following proposition makes precise the idea that a non-convex non-concave function is either a wave or a saddle. We will proceed further with the reduction of a one-dimensional non convex non concave function in Lemma \ref{tec}.
\begin{prop}\label{ncnc}
	A $\cc^2$ function $f:\rr^n \to \rr$ is neither convex nor concave if and only if it is neither convex nor concave along a straight line, or there exists $x$ in $\rr^n$ such that the Hessian $\hh f(x)$ admits both positive and negative eigenvalues. 
\end{prop}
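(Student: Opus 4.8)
The plan is to prove the two implications separately, with the nontrivial direction being: if $f$ is neither convex nor concave, then either it fails convexity/concavity along some straight line, or some Hessian $\hh f(x)$ is indefinite. The converse is immediate, since convexity (resp. concavity) of $f$ forces convexity (resp. concavity) of every one-dimensional restriction $t\mapsto f(x+tv)$ and forces every Hessian $\hh f(x)$ to be positive (resp. negative) semidefinite, so if $f$ is convex it cannot be non-convex non-concave along a line nor have an indefinite Hessian, and symmetrically for concave $f$; contrapositively, the right-hand condition forces $f$ to be neither convex nor concave.

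\textbf{The forward implication.} Assume $f$ is neither convex nor concave, and suppose it is convex or concave along every straight line; I must produce a point where $\hh f$ is indefinite. The key observation is that the restriction of $f$ to the line through $x$ in direction $v$ has second derivative $v^{\mathsf T}\hh f(x)\,v$, so ``$f$ convex along every line'' is equivalent to ``$v^{\mathsf T}\hh f(x)\,v\ge 0$ for all $x$ and all $v$'', i.e.\ to every Hessian being positive semidefinite. But a $\cc^2$ function all of whose Hessians are positive semidefinite is convex (standard second-order characterization of convexity on $\rr^n$). Hence ``$f$ convex along every line'' $\iff$ ``$f$ convex'', and likewise for concavity. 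Therefore, under the assumption that $f$ is convex or concave along every line, there would be two sub-cases: if $f$ is convex along every line then $f$ is convex, contradicting our hypothesis; if $f$ is concave along every line then $f$ is concave, again a contradiction. So it is \emph{not} the case that $f$ is convex or concave along every line and simultaneously $f$ is neither convex nor concave\,---\,wait, this shows the ``neither convex nor concave along a line'' disjunct; I must be careful that the disjunction in the statement is inclusive.

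\textbf{Reorganizing the dichotomy cleanly.} The cleanest route: I will show the negation of the right-hand side implies $f$ is convex or concave. Assume (i) $f$ is convex or concave along every straight line, and (ii) no Hessian is indefinite, i.e.\ every $\hh f(x)$ is semidefinite (positive or negative). From (i), for each fixed $v$ the scalar function $x\mapsto v^{\mathsf T}\hh f(x)v$ never changes sign as $x$ varies along the line $\rr v$ through any basepoint\,---\,actually (i) gives more: fixing a line $\ell$, the function $t\mapsto f(x_0+tv)$ is convex or concave, so $v^{\mathsf T}\hh f(x_0+tv)v$ has constant sign on $\ell$. Combined with (ii) and a connectedness argument on $\rr^n$ (which is connected), I claim the sign of $v^{\mathsf T}\hh f(x)v$ is independent of $x$: if it were positive somewhere and negative elsewhere, continuity would give a point where $v^{\mathsf T}\hh f(x)v=0$; then at a nearby point the sign must be, say, positive, but semidefiniteness at the zero point plus the line restriction through it being convex-or-concape... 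The honest way is: the set where $\hh f$ is positive semidefinite and the set where it is negative semidefinite are both closed, cover $\rr^n$ by (ii), and their intersection is where $\hh f=0$; I then argue that passing from the ``PSD region'' to the ``NSD region'' is impossible without violating (i) along a connecting segment, unless $\hh f\equiv 0$ on a separating hypersurface, and a short argument (e.g.\ via Taylor expansion, or via the one-dimensional restriction to the connecting segment which must be convex or concave hence monotone-curvature) upgrades this to global definiteness of a fixed sign, giving $f$ convex or $f$ concave. This connectedness/sign-propagation step is the only real content and the main obstacle; everything else is the standard second-order calculus of convex functions. I expect the author handles it by reducing to the one-dimensional restrictions and invoking the elementary fact that a $\cc^2$ function on $\rr$ that is ``convex on a subinterval and concave on another'' is neither convex nor concave, thereby realizing the first disjunct.
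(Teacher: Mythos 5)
Your converse direction and your overall reduction are correct and match the paper's: both arguments boil down to showing that if $f$ is neither convex nor concave and every Hessian is semidefinite, then some line restriction of $f$ is neither convex nor concave. The problem is that you stop exactly at the step you yourself flag as ``the only real content'': the sign-propagation argument is never actually carried out. Asserting that passing from the PSD region to the NSD region ``is impossible without violating (i) along a connecting segment, unless $\hh f\equiv 0$ on a separating hypersurface'', followed by ``a short argument upgrades this'', is not a proof, and the route you gesture at does not obviously close: the common boundary of the two closed regions need not be a hypersurface, and $\hh f$ vanishing on it is not by itself contradictory.

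The concrete obstruction you are missing is a kernel issue. Take $x_1$ with $\hh f(x_1)$ negative semidefinite and nonzero, $x_2$ with $\hh f(x_2)$ positive semidefinite and nonzero, and set $g(t)=f(tx_1+(1-t)x_2)$, $v=x_1-x_2$. Then $g''(1)=v^{\mathsf T}\hh f(x_1)v\le 0$ and $g''(0)=v^{\mathsf T}\hh f(x_2)v\ge 0$, but both inequalities may be equalities: for a semidefinite matrix $A$ one has $v^{\mathsf T}Av=0$ exactly when $Av=0$, so this happens whenever $v$ lies in $\mathrm{Ker}\,\hh f(x_1)$ or $\mathrm{Ker}\,\hh f(x_2)$, and then $g$ could perfectly well be convex, concave, or affine. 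The paper's proof is organized around precisely this point: it first shows that the sets $U_1,U_2$ of points with nonzero NSD (resp.\ PSD) Hessian are open and nonempty, then proves a selection lemma (Lemma \ref{cavexlem}) producing $(x_1,x_2)\in U_1\times U_2$ with $x_1-x_2$ outside both kernels, which makes the inequalities strict, $g''(1)<0<g''(0)$, and finishes the argument. Until you supply this selection step (or an equivalent), your proof has a genuine gap at its central point.
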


\begin{proof} We denote by $S^+_n(\rr)$ (resp. by $S^-_n(\rr)$) the set of non-negative (resp. non-positive) symmetric matrices.
	
	Since a $\cc^2$ function is convex (resp. concave) if and only if its Hessian admits only non-negative (resp. non-positive) eigenvalues, it is enough to prove the following statement:
	if $f$ is a non-convex and non-concave $\cc^2$ function with $\hh f(x) \in S^+_n(\rr) \cup S^-_n(\rr)$ for all $x$, there exists a straight line along which $f$ is neither concave nor convex.
	
	Under the assumptions of this statement, the sets $U_1=\{x \in \rr^n | \hh f(x)\in S^-_n(\rr)\setminus \{0\} \}$ and $U_2=\{x \in \rr^n | \hh f(x)\in S^+_n(\rr)\setminus \{0\} \}$ are open and non empty: if $U_1$ is empty, $f$ is necessarily convex. If $x_1$ is in $U_1$, $\hh f(x_1)$ admits a negative eigenvalue. Hence for $x$ close enough to $x_1$, $\hh f(x)$ admits a negative eigenvalue and since $\hh f(x) \in S^+_n(\rr) \cup S^-_n(\rr)$ by hypothesis, necessarily $\hh f(x)$ is in $U_1$. We are going to apply the following lemma to the continuous function $A=\hh f$ and the sets $U_1$ and $U_2$.	
	\begin{lem}\label{cavexlem}
		If $A: \rr^n \to M_n(\rr)$ is a continuous function and $U_1$ and $U_2$ are two disjoint open sets on which $A$ does not vanish, there exists $(x_1,x_2)\in U_1 \times U_2$ such that\[ x_1-x_2 \notin {\rm Ker} A(x_1) \cup {\rm Ker} A(x_2).\]
	\end{lem}	
	
	Now, let us take $(x_1,x_2)$ in $U_1 \times U_2$ such that $x_1-x_2 \notin {\rm Ker} \hh f(x_1) \cup {\rm Ker} \hh f(x_2)$ and define $g(t)=f(tx_1 +(1-t)x_2)$. To show that the $\cc^2$ function $g$ is neither concave nor convex, we evaluate its second derivative:
	\[g''(t)= \hh f(tx_1+ (1-t)x_2)(x_1-x_2)\cdot (x_1-x_2).\]
	
	If $A$ is in $S^+_n(\rr)\cup S^-_n(\rr)$, $Ax\cdot x =0$ if and only if $Ax=0$. Since $\hh f(x_1)$ (resp. $\hh f(x_2)$) is in $S^-_n(\rr)$ (resp. $S^+_n(\rr)$), and 
	$x_1-x_2 \notin {\rm Ker} \hh f(x_1) \cup {\rm Ker} \hh f(x_2)$, we obtain $g''(1)=  \hh f(x_1)(x_1-x_2)\cdot (x_1-x_2) <0$ since $x_1-x_2$ is not in ${\rm Ker} \hh f(x_1)$, and $g''(0)=\hh f(x_2)(x_1-x_2)\cdot (x_1-x_2) >0$ since $x_1-x_2$ is not in ${\rm Ker} \hh f(x_2)$. Thus, $g$ is neither concave nor convex.
\end{proof}

\begin{proof}[Proof of Lemma \ref{cavexlem}.]
	For each $x_1^\circ \in U_1$, since $A(x_1^\circ)$ is a nonzero matrix, there exists $x_2^\circ$ in the open set $U_2$ such that $A(x_1^\circ)(x_1^\circ -x_2^\circ)\neq 0$. Since $(x_1,x_2)\mapsto A(x_1)(x_1-x_2)$ is continuous, we may assume up to a diminution of $U_1$ and $U_2$ that $A(x_1)(x_1-x_2)\neq 0$ for all $(x_1,x_2) \in U_1 \times U_2$.
	
	Now let us fix $x_2^\circ$ in $U_2$. Again, since $A(x_2^\circ)$ is nonzero, there exists $x_1^\circ$ in the open set $U_1$ such that $A(x_2^\circ)(x_1^\circ -x_2^\circ)\neq 0$, and the previous argument gives that $A(x_1^\circ)(x_1^\circ-x_2^\circ)\neq 0$, hence the conclusion.
\end{proof}

The next proposition deals with the behavior of the variational and viscosity operators when reducing or transforming the Hamiltonian. Let us first describe formally the effect of such transformations on the classical solutions.\\
\emph{Affine transformations.} Let $H$ be a Hamiltonian on $\rr^d$. Let $A$ be an invertible matrix, $b$ and $n$ be vectors of $\rr^d$, $\alpha$ a real value and $\lambda$ a nonzero real value, and define $\bar{H}(p)=\frac{1}{\lambda} H(Ap+b)+p\cdot n + \alpha$. If $u:\rr \times \rr^d\to \rr$ is $\cc^1$ and $v(t,q)=u(\lambda t,^t\!\!Aq+\lambda tn)+b\cdot q + \alpha \lambda t $, then for all $(t,q)$, \[
\dd_t u(\tilde{t},\tilde{q})+\bar{H}\left(\dd_qu(\tilde{t},\tilde{q})\right)=0 \iff \dd_t v(t,q)+H(\dd_qv(t,q))=0,
\]
with $(\tilde{t},\tilde{q})=(\lambda t, ^t\!\!Aq+\lambda tn)$.\\
\emph{Reduction.}  Assume that  $H$ is defined on $\rr^{d_1}\times \rr^{d_2}$. Let us fix $p_2$ in $\rr^{d_2}$ and define $\bar{H}(p_1)=H(p_1,p_2)$. If $u:\rr \times \rr^{d_1}\to \rr$ is $\cc^1$ and $v(t,q_1,q_2)=u(t,q_1)+p_2\cdot q_2$, then for all $(t,q_1,q_2)$\[
\dd_t u(t,q_1)+\bar{H}\left(\dd_{q_1}u(t,q_1)\right)=0 \iff \dd_t v(t,q_1,q_2)+H(\dd_{q_1} v(t,q_1,q_2),\dd_{q_2} v(t,q_1,q_2))=0.
\]
Let us translate this in terms of operators.
\begin{prop}\label{propred} Let $H$ be a $\cc^2$ Hamiltonian with second derivative bounded by $C$.
	\begin{enumerate}
		\item \label{afftrans}  \emph{Affine transformations.} Let $u_0$ be a Lipschitz $B$-semiconcave initial condition. If $\bar{H}(p)=\frac{1}{\lambda} H(Ap+b)+p\cdot n + \alpha$ and $v_0(q)=u_0(^t\!Aq)+ b\cdot q$, 
		\[V^t_{0,H} v_0(q)=V^{\lambda t}_{0,\bar{H}} u_0 (^t\!Aq+\lambda tn)+b\cdot q + \alpha \lambda t\]
		for all $(t,q)$ and
		\[R^t_{0,H} v_0(q)=R^{\lambda t}_{0,\bar{H}} u_0 (^t\!Aq+\lambda tn)+b\cdot q + \alpha \lambda t\]
		as long as $t<1/||A||^2 BC$. 
		\item \emph{Reduction.}\label{red} Assume that $H$ is defined on $\rr^{d_1}\times \rr^{d_2}$, fix $p_2$ in $\rr^{d_2}$ and define $\bar{H}(p_1)=H(p_1,p_2)$.
		If $u_0$ is a Lipschitz $B$-semiconcave function on $\rr^{d_1}$, and $v_0(q_1,q_2)=u_0(q_1)+p_2\cdot q_2$, then 	\[V^t_{0,H} v_0(q_1,q_2)=V^{t}_{0,\bar{H}} u_0 (q_1)+p_2\cdot q_2\]
		for all $(t,q_1,q_2)$ and
		\[R^t_{0,H} v_0(q_1,q_2)=R^{t}_{0,\bar{H}} u_0 (q_1)+p_2\cdot q_2,
		\]
		as long as $t<1/BC$. 
	\end{enumerate}
		\end{prop}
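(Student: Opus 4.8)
The statement to prove is Proposition~\ref{propred}, which asserts that the viscosity and variational operators transform covariantly under affine changes of the momentum variable and under reduction to a fixed value of part of the momentum. The strategy is to verify each assertion first at the level of $\cc^2$ solutions (the method of characteristics, \S\ref{introclass}), where the formal computations preceding the statement already tell us what the correspondence must be, and then to propagate the identities to nonsmooth semiconcave data by invoking Propositions~\ref{varmin} and~\ref{v<v} (for the variational side) and the uniqueness and stability of viscosity solutions in Proposition~\ref{visc}. The role of the time restrictions $t<1/\|A\|^2BC$ and $t<1/BC$ is exactly to stay in the range where Proposition~\ref{varmin} identifies $R^t_0 u_0$ with the minimal section of the wavefront, so that the variational operator is canonically determined and we may argue by the explicit wavefront formula rather than by the abstract axioms.

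\textbf{Viscosity part.} For the viscosity identities I would argue directly from Definition~\ref{defivisc}. Given the change of variables $v(t,q)=u(\lambda t,{}^t\!Aq+\lambda tn)+b\cdot q+\alpha\lambda t$, a test function $\phi$ touching $v$ from above (resp.\ below) at $(t,q)$ pulls back, via the inverse affine substitution, to a test function touching $u=V^{\cdot}_{0,\bar H}u_0$ from above (resp.\ below) at the corresponding point; the chain rule gives $\dd_t\phi$ and $\dd_q\phi$ in terms of the transformed derivatives, and the algebraic identity displayed just before the proposition shows that the subsolution inequality for one equation is equivalent to that for the other. Hence $v$ is the viscosity solution of \eqref{HJ} for $H$ with initial datum $v_0$, and by uniqueness (Proposition~\ref{visc}) it equals $V^t_{0,H}v_0$. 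The reduction case is even simpler: $v_0(q_1,q_2)=u_0(q_1)+p_2\cdot q_2$ produces $v(t,q_1,q_2)=V^t_{0,\bar H}u_0(q_1)+p_2\cdot q_2$, which is affine in $q_2$ with constant slope $p_2$, so any test function touching it splits, and one checks the HJ inequalities for $H$ reduce to those for $\bar H$. Note the viscosity identities actually hold for all $(t,q)$, with no time restriction, since viscosity solutions are global.

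\textbf{Variational part and the main obstacle.} For the variational operators I would not try to verify axioms~\eqref{monot}--\eqref{var} for a transformed family; instead, for $t$ in the stated range, use Proposition~\ref{varmin} to write $R^t_0 u_0(q)=\inf\{u_0(q_0)+\aaa^t_0(\gamma)\}$ over characteristics, and then show that the affine/reduction substitution sets up a bijection between the characteristics of $\bar H$ issued from $gr(\dd u_0)$ and those of $H$ issued from $gr(\dd v_0)$, under which the action functionals correspond with precisely the additive/multiplicative corrections appearing in the formula. Concretely, for the integrable reduction, $\phi^t_{0,H}(q_1,q_2,p_1,p_2)=(q_1+t\nabla_{p_1}H(p_1,p_2),\,q_2+t\nabla_{p_2}H(p_1,p_2),\,p_1,p_2)$ and one restricts to the slice $p_2=\mathrm{const}$; the action of such a path is $t\bigl(p\cdot\nabla H(p)-H(p)\bigr)$ and splits as the $\bar H$-action of the $q_1$-component plus $p_2\cdot(q_2-q_{2,0})$, which integrates against the $p_2\cdot q_2$ term correctly. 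The infimum over the $H$-wavefront then matches the infimum over the $\bar H$-wavefront shifted by the affine term. The delicate point—the step I expect to be the main obstacle—is bookkeeping the semiconcavity constants and time bounds: $v_0(q)=u_0({}^t\!Aq)+b\cdot q$ is $\|A\|^2B$-semiconcave (the linear term $b\cdot q$ being harmless), which is why $1/BC$ degrades to $1/\|A\|^2BC$; and one must check that the Hamiltonian $\bar H$ has second derivative bounded by a constant compatible with applying Proposition~\ref{varmin} at the rescaled time $\lambda t$, so that both sides of the claimed identity lie in their respective uniqueness ranges simultaneously. Once the constants line up, the equality of the two minimal sections is immediate from the bijection of characteristics, and the additivity/monotonicity axioms are only needed to handle the harmless affine shift $b\cdot q+\alpha\lambda t$.
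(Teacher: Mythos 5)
Your proposal is correct and follows essentially the same route as the paper: the viscosity identities are obtained by transforming test functions and invoking uniqueness, and the variational identities by applying Proposition~\ref{varmin} on both sides and checking that the semiconcavity constant of $v_0$ (namely $\|A\|^2B$) and the bound $\|d^2\bar H\|\leq C\|A\|^2/\lambda$ place $t$ and $\lambda t$ simultaneously in the respective domains of validity. The only difference is cosmetic: you make explicit the bijection of characteristics and actions that the paper leaves implicit in the formal computation preceding the statement.
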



\begin{proof} The viscosity equality is obtained by applying the formal transformation or reduction on the test functions (see Definition \ref{defivisc}), and the variational equality is obtained for small time by applying Proposition \ref{varmin} with the domain of validity given for integrable Hamiltonians, which is the same for $(\bar{H},u_0)$ and $(H,v_0)$ in both cases:
	\item \emph{Affine transformations:}	since  $v_0$ is $B||A||^2$-semiconcave, the domain of validity for $(H,v_0)$ is at least $[0,1/\|A\|^2BC)$. But $\|d^2\bar{H}\|\leq C||A||^2/\lambda$, hence the domain of validity for $(\bar{H},u_0)$ is at least $[0,\lambda/\|A\|^2BC)$ and  $\lambda t$ is in this domain if $t < 1/\|A\|^2BC$.
	\item \emph{Reduction:} since  $\|d^2\bar{H}\|\leq C$ and $v_0$ is $B$-semiconcave, the domain of validity for both $(\bar{H},u_0)$ and $(H,v_0)$ is at least $[0,1/BC]$.
\end{proof}

\begin{proof}[Proof of Proposition \ref{joukrecsc1}] If $H$ is a neither convex nor concave integrable Hamiltonian, Proposition \ref{ncnc} states that there is either a straight line along which $H$ is neither convex nor concave, or a point $p_0$ such that the Hessian matrix $\hh H(p_0)$ has both a positive and a negative eigenvalue.\\

In the first case, applying an affine transformation on the vector space we may assume without loss of generality (see Proposition \ref{propred}-\eqref{afftrans}) that $p\in \rr\mapsto H(p,0,\cdots,0)$ is neither convex nor concave, and we denote by $\bar{H}(p)=H(p,0,\cdots,0)$ the reduced Hamiltonian. 
Applied to $\bar{H}$, Proposition \ref{joukrecdim1} gives a semiconcave initial condition $u_0$ such that $R^t_{0,\bar{H}} u_0 \neq V^t_{0,\bar{H}} u_0 $ for all $t<T$.
With Proposition \ref{propred}-\eqref{red}, we get from $u_0$ a semiconcave Lipschitz initial condition $v_0:\rr \times \rr^d\to \rr$ for which $R^t_{0,{H}} v_0 \neq V^t_{0,{H}} v_0 $ for all $t<T$.\\

In the second case,  we may assume that the point of interest is a (strict) saddle point at $0$: if $p_0$ denotes the point for which $\hh H(p_0)$ has both a positive and a negative eigenvalue, take $\tilde{H}(p)=H(p_0-p)+p\cdot \nabla H(p_0)-H(p_0)$ and apply Proposition \ref{propred}-\eqref{afftrans}.

Then, up to another linear transformation on the vector space, the Hamiltonian may even be taken as \[H(p_1,p_2,\cdots, p_d)=p_1p_2 + K(p_1,p_2,\cdots, p_d),\]
where $K$ is a $\cc^2$ Hamiltonian with partial derivatives with respect to $p_1$ and $p_2$ vanishing at the second order: \[K(0,\cdots,0)=0, \,\dd_{p_1} K(0,\cdots,0)=0,\, \dd_{p_2} K(0,\cdots,0)=0,\, \dd^2_{(p_1,p_2)} K(0,\cdots,0)=0.\] 
We denote by $\bar{H}$ (resp. $\bar{K}$) the reduced Hamiltonians such that  \[\bar{H}(p_1,p_2)=H(p_1,p_2,0,\cdots,0)=p_1p_2 + \bar{K}(p_1,p_2).\] 
We still denote by $C$ a bound on the second derivatives of $H$ and $\bar{H}$.

Now, we define \[\bar{H}_\ep(p_1,p_2)=\frac{1}{\ep^2} \bar{H}(\ep p_1,\ep p_2) =p_1p_2 +\frac{1}{\ep^2} \bar{K}(\ep p_1,\ep p_2)\] and \[\bar{H}_0(p_1,p_2)=p_1p_2.\]

We fix $L>0$ and take $u_0$ as in Proposition \ref{contrex}: for all $0<t<1/2L$, there exists a point $q_t$ such that $R^t_{0,\bar{H}_0} u_0(q_t) \neq V^t_{0,\bar{H}_0} u_0(q_t)$. Let us now fix $t$ in $(0,1/2L)$.

Proposition \ref{locesti} gives that
\[\|R^t_{0,\bar{H}_\ep} u_0(q_t)-R^t_{0,\bar{H}_0} u_0(q_t)\|\leq t \sup_{\|p\|\leq L} \frac{1}{\ep^2} \bar{K}(\ep p)   \]
and 
\[\|V^t_{0,\bar{H}_\ep} u_0(q_t)-V^t_{0,\bar{H}_0} u_0(q_t)\|\leq t \sup_{\|p\|\leq L} \frac{1}{\ep^2} \bar{K}(\ep p).   \]

Since $\bar{K}$ is zero until second order at $0$, $\frac{1}{\ep^2} \bar{K}(\ep p) = \circ(\|p\|^2)$ and $\sup_{\|p\|\leq L} \frac{1}{\ep^2} \bar{K}(\ep p)$ tends to $0$ when $\ep$ tends to $0$. Thus, there exists $\ep>0$ (depending on $t$) such that \[\sup_{\|p\|\leq L} \frac{1}{\ep^2} \bar{K}(\ep p)  < \frac{1}{3t}\left|R^t_{0,\bar{H}_0} u_0(q_t) - V^t_{0,\bar{H}_0} u_0(q_t)\right|,\]
and for such an $\ep$, we then have $R^t_{0,\bar{H}_\ep} u_0(q_t) \neq V^t_{0,\bar{H}_\ep} u_0(q_t)$.

Let us go back to $\bar{H}$, using Proposition \ref{propred}-\eqref{afftrans} with $\lambda=\ep^2$, $A=\ep {\rm id}$ and $n$, $b$ and $\alpha$ equal to zero. Defining $v_0(q)=u_0(\ep q)$, we get
\[V^{t/\ep^2}_{0,\bar{H}} v_0(q_t/\ep) =V^{t}_ {0,\bar{H}_\ep} u_0(q_t)  \]
and
\[R^{t/\ep^2}_{0,\bar{H}} v_0(q_t/\ep) = R^{t}_ {0,\bar{H}_\ep} u_0(q_t)  \]
as long as $\frac{t}{\ep^2}<\frac{1}{\ep^2LC}$ (which is the case since $C>2$ and $t<1/2L$), and as a consequence
\[V^{t/\ep^2}_{0,\bar{H}} v_0(q_t/\ep) \neq R^{t/\ep^2}_{0,\bar{H}} v_0(q_t/\ep).\]
Note that since $v_0$ is $\ep^2 L$-semiconcave, $t/\ep^2$ belongs to the domain of validity of Proposition \ref{varmin} which is here $(0,1/\ep^2LC)$. As in the previous case we get the semiconcave initial condition suiting the non reduced Hamiltonian $H$ via Proposition \ref{propred}-\eqref{red}.
\end{proof}

\section{One-dimensional integrable Hamiltonian}
 \label{dim1}

With the help of Lemma \ref{tec}, stated and proved at the end of this section, we reduce Proposition \ref{joukrecdim1}, the one-dimensional counterpart of Proposition \ref{joukrecsc1} (see \S\ref{proofjoukrec}), to the following statement, giving a situation where there is only one variational solution, that does not match with the viscosity solution. 
\begin{prop}\label{dim1ex} Let $H$ be a $\cc^2$ Hamiltonian with bounded second derivative such that $H(-1)=H(1)=H'(1)=0$, $H'(-1)<0$, $H''(1)<0$, and $H<0$ on $(-1,1)$.
	
	Then if $f$ is a $\cc^2$ Lipschitz function with $f(0)=f'(0)=0$, with bounded second derivative and strictly convex on $\rr_+$, 
	and $u_0(q)=-|q|+f(q)$, the unique variational solution $(t,q)\mapsto R^t_0 u_0(q)$ does not solve the Hamilton--Jacobi equation \eqref{HJ} in the viscosity sense for all $t$ small enough.
\end{prop}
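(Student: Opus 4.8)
The plan is to analyze the wavefront $\ff_{u_0}$ explicitly near the shock produced by the corner of $-|q|$ at the origin, and to check that the unique continuous section of this wavefront violates Oleinik's entropy condition (Proposition \ref{RH}) for small positive time. Since $u_0(q)=-|q|+f(q)$ is semiconcave (being the sum of a concave function and a $\cc^2$ function with bounded second derivative), Proposition \ref{varmin} applies, so $R^t_0 u_0$ is the minimal section of $\ff_{u_0}$ and is uniquely defined on a time interval $[0,\delta)$; the content is then to locate the shock and identify the two momenta $p_1,p_2$ meeting there.

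The first step is to describe the characteristics. For the integrable Hamiltonian, a characteristic issued from $q_0\neq 0$ carries momentum $p_0=u_0'(q_0)=\mathrm{sign}(-q_0)+f'(q_0)$, travels at speed $H'(p_0)$, and generates a $\cc^2$ branch of the wavefront; at $q_0=0$ the Clarke derivative $\dd u_0(0)$ is the interval $[-1,1]+\{f'(0)\}=[-1,1]$, which fans out a family of characteristics with momenta $p\in[-1,1]$, all emanating from the origin. The key geometric input is the hypothesis on $H$: $H(-1)=H(1)=H'(1)=0$, $H<0$ on $(-1,1)$, $H'(-1)<0$, $H''(1)<0$. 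These ensure that the characteristic with $p=1$ moves with zero speed and action $\aaa^t_0 = t(p\nabla H(p)-H(p))|_{p=1}=0$, so the wavefront point $(t,0,0)$ sits on the branch coming from the left corner; while characteristics with $p$ slightly less than $1$ move slightly to the left (since $H'(1)=0$, $H''(1)<0$ gives $H'(p)>0$ for $p<1$ near $1$ — so actually they move right; this sign bookkeeping is exactly the delicate part) and carry negative action because $H<0$ there. The second step is to compare, above a fixed small point $q$ slightly to one side of $0$, the value coming from the "right-moving convex side" of $u_0$ (where $u_0'= -1+f'$ near $0^+$, using strict convexity of $f$ on $\rr_+$ so these characteristics genuinely spread and cover a neighbourhood) against the value coming from the fan at the origin, and to show the minimal section has a genuine downward shock there.

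The third step is the entropy check. Having identified the shock curve $q=\xi(t)$ and the two one-sided momenta $p_1 = \dd_q R^t_0 u_0(\xi(t)^-)$ and $p_2=\dd_q R^t_0 u_0(\xi(t)^+)$ — with $p_1,p_2$ close to $1$ and to the relevant value of $f'$ near $0$ — I would verify that $H$ does \emph{not} lie below the chord joining $(p_1,H(p_1))$ and $(p_2,H(p_2))$: because $H''(1)<0$, $H$ is strictly concave near $p=1$, so any chord between two nearby points straddling or approaching $1$ lies strictly below the graph, which is precisely the negation of Oleinik's condition (equivalently, the Lax condition $H'(p_1)(p_2-p_1)\leq H(p_2)-H(p_1)\leq H'(p_2)(p_2-p_1)$ fails). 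By Proposition \ref{RH} the minimal section, written locally as $\min(f_1,f_2)$ of two $\cc^1$ solutions, is then \emph{not} a viscosity solution at that point. One must also rule out that the true shock involves the far-left momentum near $-1$ instead; here the hypotheses $H'(-1)<0$ and $H<0$ on $(-1,1)$ are used to show the left branch's contribution is dominated, so the operative shock is the one near $p=1$, governed by the concavity $H''(1)<0$.

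The main obstacle I anticipate is the bookkeeping in the second step: precisely controlling which characteristics cover a neighbourhood of the shock for small $t$, showing the fan from the origin and the branch from $q_0>0$ overlap in the right way, and extracting the exact one-sided slopes of the minimal section so that Proposition \ref{RH} can be invoked cleanly. The function $f$ is a free perturbation making $u_0$ smooth away from $0$ and ensuring strict convexity on $\rr_+$ (so the forward characteristics spread and the minimal-section picture is clean), but one must check that $f'(0)=0$ keeps the relevant momenta centered exactly at the values $\pm 1$ where the sign conditions on $H$, $H'$, $H''$ were imposed — everything hinges on the shock momenta converging to $1$ as $t\to 0$, where $H''(1)<0$ forces the entropy violation. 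This is essentially the one-dimensional example of \cite{chenciner}, so I would follow that analysis, adapted to the present viscosity/variational vocabulary via Propositions \ref{varmin} and \ref{RH}.
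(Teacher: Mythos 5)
Your overall strategy --- parametrize the wavefront, show that for small $t$ it has a unique continuous section with a shock, and test that shock against Oleinik's condition via Proposition \ref{RH} --- is indeed the paper's strategy. But the key step, the identification of the shock momenta and of the mechanism that violates the entropy condition, is wrong as written. The shock of the minimal section is \emph{not} between two momenta close to $1$: it occurs between the fan issued from the corner (at a parameter $p_t\to 1$) and the branch issued from $q_0>0$, whose momentum is $u_0'(q_t)=-1+f'(q_t)\to -1$; this branch drifts leftward past the origin because $H'(-1)<0$. So the relevant chord joins a point near $(1,0)$ to a point near $(-1,0)$, and by the hypothesis $H<0$ on $(-1,1)$ that chord lies \emph{above} the graph of $H$: the entropy condition between the two limiting momenta $\pm1$ is strictly \emph{satisfied}, not violated, and your appeal to local concavity of $H$ near $1$ (``any chord between two nearby points straddling or approaching $1$ lies strictly below the graph'') does not apply to this configuration. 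The hypothesis $H''(1)<0$ serves a different purpose: it makes the piece of the fan near $p=1$ an immersed curve, so that its transverse intersection with the right branch persists for $t>0$ (Lemma \ref{unisec}).

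The actual violation is of the Lax condition at the $p=1$ endpoint, where the hypotheses force an exact equality in the limit ($H'(1)=0$ equals the slope of the chord from $(-1,0)$ to $(1,0)$), and it is the strict convexity of $f$ that tips this equality to the wrong side for $t>0$: writing the two Rankine--Hugoniot-type equations for the shock parameters $(q_t,p_t)$ and eliminating, one gets
\[
t\bigl(H(p_t)-H(u_0'(q_t))-(p_t-u_0'(q_t))H'(p_t)\bigr)=q_tf'(q_t)-f(q_t)>0,
\]
which is the negation of the upper Lax inequality, hence a fortiori of the entropy condition. In your write-up $f$ only serves to ``make the minimal-section picture clean''; in fact it is the engine of the counterexample, and with $f\equiv 0$ the computation gives equality and no strict violation. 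So your third step must be replaced entirely; your first two steps are in the right spirit, but you still owe the transversality/persistence argument that pins the shock between the fan and the right-hand branch rather than elsewhere.
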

With the vocabulary of Definition \ref{oleinik}, we work here on a specific case where the entropy condition is strictly satisfied between the  derivatives at $0$ of the initial condition, and the Lax condition is strictly satisfied on one side, and an equality on the other side, see Figure \ref{exdim1} left.

The proof consists in showing that under the assumptions of Proposition \ref{dim1ex}, when $t$ is small enough, the wavefront at time $t$ presents a unique continuous section, with a shock that denies Oleinik's entropy condition (see Proposition \ref{RH}).

\subsection{Proof of Proposition \ref{dim1ex}}\label{contrex1}
Let us fix the notations for the parametrization that follows directly from the wavefront definition (see \eqref{defwf2}).
Since $u_0$ is differentiable on $\rr\setminus \{0\}$, its Clarke derivative is reduced to a point outside zero and is the segment $[-1,1]$ at zero. The wavefront is hence the union of three pieces $\mathcal{F}_t^\ell$, $\mathcal{F}_t^r$ and $\mathcal{F}_t^0$ respectively issued from the left part, the right part, and the singularity of the initial condition, with the following parametrizations:
\[\begin{array}{lc}
\mathcal{F}_t^\ell : \left\{\begin{array}{l}
q+tH'(u_0'(q)),\\
u_0 (q)+tu_0'(q)H'(u_0'(q)) -t H(u_0'(q)),
\end{array}\right.& q<0,\\

\mathcal{F}_t^r : \left\{\begin{array}{l}
q+tH'(u_0'(q)),\\
u_0 (q)+tu_0'(q)H'(u_0'(q)) -t H(u_0'(q)),
\end{array}\right.& q>0,\\

\mathcal{F}_t^0 : \left\{\begin{array}{l}
tH'(p),\\
t\left(pH'(p)-H(p)\right),
\end{array}\right. & p \in [-1,1].
\end{array}\]

The structure of the wavefront for small time is adressed by Lemma \ref{unisec}. Figure \ref{exdim1} presents an example of the situation. 
\begin{lem}\label{unisec}
	Under the assumptions of Proposition \ref{dim1ex}, there exists $\delta>0$ such that for all $0<t<\delta$, the wavefront $\ff_t$ has a unique continuous section, presenting a shock between $\ff_t^0$ and $\ff_t^r$.
\end{lem}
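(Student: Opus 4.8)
The plan is to analyze the three pieces $\ff_t^\ell$, $\ff_t^r$, $\ff_t^0$ of the wavefront separately for small $t$, show that $\ff_t^\ell$ is a genuine graph over a half-line $q \leq a(t)$, that $\ff_t^r$ is a graph over a half-line $q \geq b(t)$, and that the ``middle'' piece $\ff_t^0$ is a curve connecting the endpoints of these two, which overlaps the $\ff_t^r$-branch in such a way that exactly one continuous section can be extracted. First I would treat $\ff_t^\ell$: for $q<0$ we have $u_0'(q) = -1 + f'(q)$, which for $q$ near $0$ is close to $-1$, and by the implicit function theorem the map $q \mapsto q + tH'(u_0'(q))$ is a diffeomorphism onto its image for $t$ small (its derivative is $1 + tH''(u_0'(q))u_0''(q)$, positive once $t < 1/\|d^2H\|\,\|d^2 u_0\|$), so $\ff_t^\ell$ is the graph of a $\cc^1$ function on $(-\infty, a(t)]$ where $a(t) = tH'(-1) + o(t)$; note $H'(-1)<0$, so $a(t)<0$. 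Symmetrically, on $q>0$, $u_0'(q) = 1 + f'(q)$ is close to $1$, $H'(1)=0$, so the spatial coordinate $q + tH'(u_0'(q))$ is again strictly increasing in $q$ for $t$ small (using strict convexity of $f$ on $\rr_+$ to control $u_0''>0$, or just the second-derivative bound), giving a graph on $[b(t), +\infty)$ with $b(t) = tH'(1) + o(t) = o(t)$, in fact $b(t)>0$ will follow from $H''(1)<0$ combined with $f$ strictly convex making $u_0' > 1$ on $(0,\infty)$, hence $H'(u_0'(q))<0$... this needs care.

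The key computation is the behavior of the middle branch $\ff_t^0$, parametrized by $p \in [-1,1]$ via $(tH'(p),\ t(pH'(p)-H(p)))$. At $p=-1$ its endpoint matches the endpoint of $\ff_t^\ell$ (both give spatial coordinate $tH'(-1)$ and height $t(-H'(-1)-H(-1)) = -tH'(-1)$, using $u_0(0)=0$), and at $p=1$ it matches the endpoint of $\ff_t^r$ at spatial coordinate $tH'(1)=0$. The spatial coordinate $q(p) = tH'(p)$ has derivative $tH''(p)$; since $H<0$ on $(-1,1)$, $H(-1)=H(1)=0$, $H'(-1)<0$ and $H''(1)<0$, the function $H'$ on $[-1,1]$ starts negative, and $H''(1)<0$ forces $H'$ to be decreasing near $p=1$ with $H'(1)=0$, so $H'>0$ just left of $1$; thus $H'$ is not monotone on $[-1,1]$ and the middle curve ``folds''. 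I would identify the range of $q(p)$ as an interval $[\min tH', \max tH']$ that overlaps $[a(t), 0]$ and pokes slightly to the right of $0 = b(t)$-ish, producing the multivalued region; then the unique continuous section is forced because over the overlap region the only way to join the left graph continuously to the right graph through $\ff_t^0$ is along the upper (or appropriately chosen) fold, and the height comparison $t(pH'(p)-H(p))$ versus the $\ff_t^r$ height shows which branch survives.

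The main obstacle I expect is the bookkeeping showing the section is \emph{unique}: one must rule out alternative continuous sections that would switch between $\ff_t^0$ and $\ff_t^r$ at interior points, or follow a different arc of the folded middle curve. The cleanest route is to observe that $\ff_t^0 \cup \ff_t^r$ over the relevant $q$-interval consists of at most two arcs meeting only at their common endpoint (the fold point of $\ff_t^0$ and the endpoint $b(t)$), so any continuous section is determined once we know it must contain the entire $\ff_t^\ell$ (the only branch over $(-\infty,a(t))$) and the entire unbounded part of $\ff_t^r$; continuity then pins down the path across the finitely-many crossing points, and one checks by a direct height inequality (using $H<0$ strictly on $(-1,1)$ and strict convexity of $f$) that at each such crossing there is no freedom. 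The shock location is then the point where the chosen section jumps from $\ff_t^0$ to $\ff_t^r$; its two one-sided derivatives are $p$-values, say $p_1 \in (-1,1)$ and $p_2 = u_0'(q) > 1$, and one records for the next lemma that $H$ fails to lie below the chord between them — this uses precisely $H(1)=H'(1)=0$, $H''(1)<0$ (so $H$ lies \emph{above} its tangent line at $1$, hence above the chord on the relevant side), which is the geometric content flagged after Proposition \ref{dim1ex}.
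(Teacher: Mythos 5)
Your plan follows the same broad strategy as the paper (treat the three branches separately, show the two arms are graphs for small $t$, and extract the unique section from the folded middle piece), but it rests on a sign error that inverts the whole geometry. From $u_0(q)=-|q|+f(q)$ one gets $u_0'(q)=1+f'(q)$ for $q<0$ and $u_0'(q)=-1+f'(q)$ for $q>0$; you have these swapped. Because the hypotheses on $H$ are asymmetric at $\pm1$ ($H(1)=H'(1)=0$, $H''(1)<0$, versus $H'(-1)<0$), this matters everywhere: the endpoint of $\ff_t^\ell$ is at $(0,0)$ and matches the $p=1$ end of $\ff_t^0$, while the endpoint of $\ff_t^r$ is at $\bigl(tH'(-1),-tH'(-1)\bigr)$ with $tH'(-1)<0$ and matches the $p=-1$ end. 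In particular your claim that the right arm starts at $b(t)=tH'(1)+o(t)$ with $b(t)>0$ is false: the right arm extends to the \emph{left} of $0$ and overlaps the left arm's domain, which is precisely what creates the multivalued region. The shock is then between $\ff_t^0$ (at a parameter $p_t$ close to $1$) and $\ff_t^r$ (whose slope $u_0'(q_t)$ is close to $-1$), not with a one-sided derivative ``$u_0'(q)>1$'' as you write. (As a side remark, $H''(1)<0$ puts $H$ \emph{below} its tangent at $1$ near $1$ — and indeed $H<0$ on $(-1,1)$ while the tangent is the zero line — so your closing geometric picture for the entropy violation is also off.)

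Beyond the sign issue, the two points you explicitly flag as needing care are exactly the substance of the lemma, and your plan does not supply them. The paper's route is to rescale both coordinates by $1/t$ (the reduced wavefront), so that $\ff^0$ becomes $t$-independent and the two arms converge in the $\cc^1$ topology to half-lines of slopes $+1$ and $-1$; the inequality $pH'(p)-H(p)>-|H'(p)|$ on $(-1,1)$ (a direct consequence of $H<0$ there) shows $\ff^0$ lies strictly above the graph of $x\mapsto-|x|$, so the limit front has a unique section with a transverse crossing of $\ff^r$ and $\ff^0$ at $(0,0)$ (slopes $-1$ and $1$); the hypothesis $H''(1)<0$ guarantees $\ff^0$ is immersed without double points near $p=1$, and the strict convexity of $f$ on $\rr_+$ makes $\tilde{\ff}^r_t$ a strictly convex curve tangent to the limit half-line at its fixed endpoint; the transverse intersection then persists for small $t>0$ by the implicit function theorem, and uniqueness follows because any other section would force an additional intersection that is excluded by continuity. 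Your proposal replaces this with ``a height comparison shows which branch survives'' and ``one checks by a direct height inequality,'' so both the existence of the shock for $t>0$ (as opposed to only in the limit) and the uniqueness of the section remain unproved.
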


With the previous parametrization, we may easily compute the slopes and convexity of the wavefront. We still denote by $C$ and $B$ the bounds on the second derivatives of $H$ and $u_0$.
\begin{prop} 	
	\begin{enumerate}
	\item {Slopes on the wavefront.} If $H''(p)\neq 0$ and $t>0$, the slope of $\ff^0_t$ at the point of parameter $p$ is $p$. If $t<1/BC$, the slope of $\ff_t^r$ at the point of parameter $q$ is $u_0'(q)$.
	
	\item {Convexity of the right arm.} 
		If $u_0$ is convex (resp. concave) on $[0,\delta]$, then for $t < 1/BC$, the portion of $\ff^r_t$ parametrized by $q \in (0,\delta]$ is convex (resp. concave). 
	\end{enumerate}	
	\label{slopes}
\end{prop}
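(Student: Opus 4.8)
The plan is to read off both assertions directly from the parametrizations of $\ff^0_t$ and $\ff^r_t$ recalled above, by differentiating once in the parameter.

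\textbf{Slopes.} For $\ff^0_t$, with parametrization $q(p)=tH'(p)$, $z(p)=t\bigl(pH'(p)-H(p)\bigr)$, $p\in[-1,1]$, I differentiate: $q'(p)=tH''(p)$ and $z'(p)=t\bigl(H'(p)+pH''(p)-H'(p)\bigr)=tpH''(p)$. Thus whenever $H''(p)\neq0$ and $t>0$ the parametrization is an immersion at $p$, and the slope $z'(p)/q'(p)$ equals $p$. For $\ff^r_t$, with $x(q)=q+tH'(u_0'(q))$ and $z(q)=u_0(q)+tu_0'(q)H'(u_0'(q))-tH(u_0'(q))$ for $q>0$ (where $u_0$ is $\cc^2$), the chain rule gives $x'(q)=1+tH''(u_0'(q))\,u_0''(q)$, and in $z'(q)$ the two terms $\pm\,tH'(u_0'(q))\,u_0''(q)$ cancel, leaving $z'(q)=u_0'(q)\bigl(1+tH''(u_0'(q))\,u_0''(q)\bigr)$. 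Writing $D(q):=1+tH''(u_0'(q))\,u_0''(q)$, the bounds $C$ on $H''$ and $B$ on $u_0''$ give $|tH''(u_0'(q))\,u_0''(q)|\leq tBC<1$ when $t<1/BC$, so $D(q)>0$; hence $x(\cdot)$ is strictly increasing and for $t<1/BC$ the piece $\ff^r_t$ is genuinely a $\cc^1$ graph over its first coordinate, with slope $z'(q)/x'(q)=u_0'(q)$ at the point of parameter $q$.

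\textbf{Convexity of the right arm.} On $q\in(0,\delta]$ the map $x(\cdot)$ is, again because $D>0$, a $\cc^1$ diffeomorphism onto its image; let $q(\cdot)$ denote its inverse. The portion of $\ff^r_t$ with $q\in(0,\delta]$ is then the graph of the function whose slope at abscissa $x$ is $u_0'(q(x))$. Differentiating in $x$ with $q'(x)=1/x'(q)=1/D(q)$ shows that the second derivative of this graph equals $u_0''(q)/D(q)$, whose sign is that of $u_0''(q)$ since $D(q)>0$. Therefore if $u_0$ is convex (resp. concave) on $[0,\delta]$, i.e. $u_0''\geq0$ (resp. $\leq0$) there, the slope is nondecreasing (resp. nonincreasing) in $x$ and the arm is convex (resp. concave).

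I do not expect any genuine obstacle: everything reduces to a single differentiation together with tracking the sign of $D(q)$. The one point requiring care — and the only place the hypothesis $t<1/BC$ is used — is the positivity $D(q)>0$, which is precisely what makes $\ff^r_t$ a graph over its first coordinate and hence makes "slope" and "convexity" of the right arm meaningful.
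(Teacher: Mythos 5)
Your proof is correct and follows essentially the same route as the paper: differentiate the parametrizations, observe that $y'(q)=u_0'(q)\,x'(q)$, and use $t<1/BC$ to get $x'(q)=1+tH''(u_0'(q))u_0''(q)>0$. For the convexity, the paper computes the standard ratio $\frac{x'y''-x''y'}{x'^3}=\frac{u_0''(q)}{x'(q)}$, which is exactly the quantity $u_0''(q)/D(q)$ you obtain by differentiating the slope $u_0'(q(x))$ in the abscissa, so the two arguments coincide.
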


\begin{proof}
	\begin{enumerate}
	\item If $(x(u),y(u))$ is the parametrization of a curve, the slope at the point of parameter $u$ is given by $y'(u)/x'(u)$ when $x'(u)$ is nonzero. For $\ff^0_t$, it comes $x'(p)=tH''(p)$ and $y'(p)=p x'(p)$, which proves the statement. For $\ff^r_t$, if $t<1/BC$, $x'(q)=1+t u_0''(q)H''(u_0'(q))>0$ since $u_0''$ and $H''$ are respectively bounded by $B$ and $C$, and the statement results from $y'(q)=u_0'(q)x'(q)$. 
	\item The convexity of $\mathcal{F}^r_t$ at a point of parameter $q$ is given by the sign of the ratio $\frac{x'(q)y''(q)-x''(q)y'(q)}{x'(q)^3}$. For $t<1/BC$, $x'(q)>0$ and as $y'(q)=u_0'(q)x'(q)$, \[
	\frac{x'(q)y''(q)-x''(q)y'(q)}{x'(q)^3} =\frac{x'\left( u_0''x'+u_0'x''\right)-x''u_0'x'}{x'^3}=  \frac{ u_0''(q)}{x'(q)},
	\]
	which proves the statement.
	\end{enumerate}
\end{proof}

The fact that $\ff_t^0$ depends homothetically on $t$ suggests to look for each $t>0$ at the homothetic reduction of the wavefront at time $t$, where both coordinates are divided by $t$. We call it \emph{reduced wavefront} and denote it by $\tilde{\ff}_t$. It admits the following parametrizations, where $q=tx$:
\[\begin{array}{lc}
\tilde{\ff}_t^\ell : \left\{\begin{array}{l}
x+H'(u_0'(tx)),\\
\frac{u_0(tx)}{t} +u_0'(tx)H'(u_0'(tx))- H(u_0'(tx)),
\end{array}\right.& x<0,\\

\tilde{\ff}_t^r : \left\{\begin{array}{l}
x+H'(u_0'(tx)),\\
\frac{u_0(tx)}{t}+u_0'(tx)H'(u_0'(tx)) - H(u_0'(tx)),
\end{array}\right.& x>0,\\

\tilde{\ff}_t^0 : \left\{\begin{array}{l}
H'(p),\\
pH'(p)-H(p),
\end{array}\right. & p \in [-1,1].
\end{array}\]
The asset of the reduced wavefront is that it admits a nontrivial limit when $t$ tends to $0$. The piece issued from the singularity $\ff^0=\tilde{\ff}_t^0$ does not depend on $t$, while $\tilde{\ff}^r_t$ and $\tilde{\ff}^\ell_t$ converge to straight half-lines denoted by $\ff^r$ and $\ff^\ell$. These half-lines coincide respectively with $\tilde{\ff}^r_t$ and $\tilde{\ff}^\ell_t$ at their fixed endpoints, see $t \ff^r$ and $\ff^r_t$ on Figure \ref{exdim1}. A consequence of Proposition \ref{slopes} is that $\tilde{\ff}^\ell_t$ is a graph as long as $t<1/BC$, and the same applies to $\tilde{\ff}^r_t$.

\begin{figure}[h]
	\begin{center}
		\def\svgwidth{0.35\columnwidth} 
		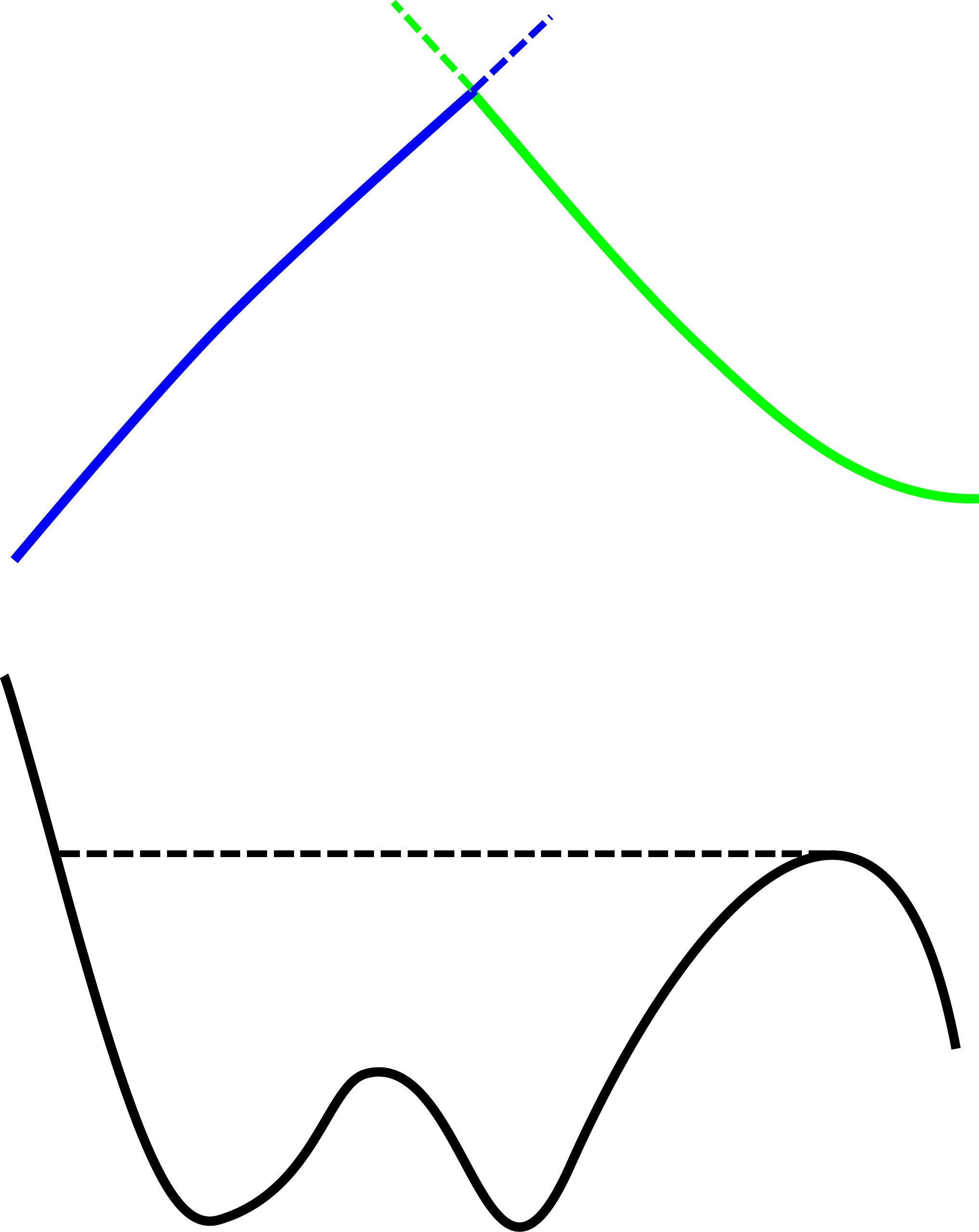 
		\hskip .3cm
		\def\svgwidth{0.55\columnwidth} 
		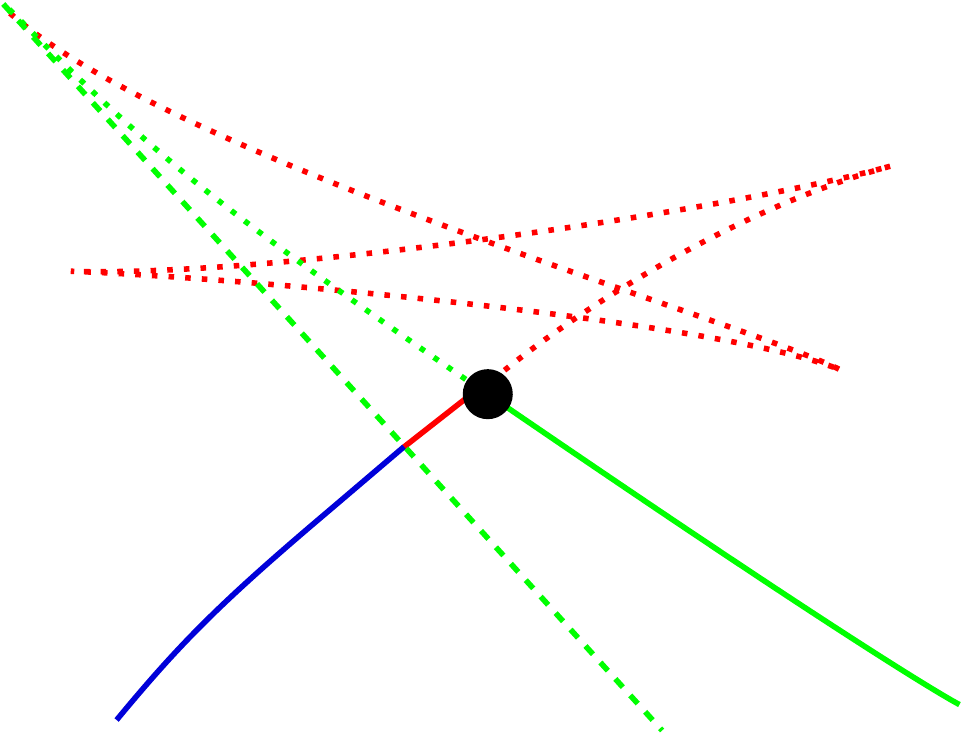 
	\end{center}
	\caption{The variational solution, given by the unique continuous section of the wavefront, does not solve the \eqref{HJ} equation in the viscosity sense at the dot.}
	\label{exdim1}
\end{figure}


\begin{proof}[Proof of Lemma \ref{unisec}] It is enough to prove the result for the reduced wavefront $\tilde{\ff}_t$, where both coordinates are divided by $t$.
 Using the left and right derivatives of $u_0$ and the fact that $H(1)=H(-1)=H'(1)=0$, we write the parametrization of the limit of the reduced wavefront:
	\[	\begin{array}{lc}
	
	\mathcal{F}^\ell : \left\{\begin{array}{l}
	x,\\
	x,
	\end{array}\right.& x<0,\\
	
\mathcal{F}^r : \left\{\begin{array}{l}
	x+H'(-1),\\
	-x-H'(-1),
	\end{array}\right.& x>0,\\
	
\mathcal{F}^0 : \left\{\begin{array}{l}
	H'(p),\\
	pH'(p)-H(p),
	\end{array}\right. & p \in [-1,1].
	\end{array}\]

	The left and right arms of the limit front are respectively the graph of $-{\rm id}$ and ${\rm id}$ on $(-\infty,0)$ and on $(H'(-1),\infty)$, where $H'(-1)<0$. The assumption $H<0$ on $(-1,1)$ implies that for all $p$ in $(-1,1)$,
	\begin{equation}\label{eq} pH'(p)-H(p)>-|H'(p)|,\end{equation}
	and this inequality is also satisfied for $p=-1$ since $H(-1)=0$ and $H'(-1)<0$. The unique continuous section of the limit front is hence the graph of $x\mapsto -|x|$. It presents a shock at $(0,0)$, which belongs to $\ff^r$ and $\ff^0$ respectively with parameters $x=-H'(-1)>0$ and $p=1$. Furthermore, \eqref{eq} implies that this shock is not a double point of $\ff^0$.

	Proposition \ref{slopes} states that since $f$ is strictly convex on $\rr_+$, $\ff^r_t$ and hence $\tilde{\ff}^r_t$ are strictly convex curves for all $t>0$. Looking at the slope for a parameter $x\to 0$ shows that $\tilde{\ff}^r_t$ admits the right arm of the limit front, $\ff^r$, as a tangent at its endpoint. Since $\tilde{\ff}^r_t$ is convex, it is hence positioned strictly above $\ff^r$. Since $\tilde{\ff}^\ell_t$ is for all $t<1/BC$ a graph with fixed endpoint at $(0,0)$, we may focus on what happens on the half-plane situated over the second diagonal.  	
	
	As $H''(1)<0$, there exists $\eta>0$ such that $H''<0$ on $(1-\eta,1]$, and the piece of $\ff^0$ parametrized by $p \in (1-\eta,1]$, denoted $\ff^0_{(1-\eta,1]}$, is immersed. Since $\ff^0$ is compact, we may assume up to taking a smaller $\eta$ that $\ff^0_{(1-\eta,1]}$ does not contain any double point either. 	
With this choice of $\eta$, the intersection $\ff^r \cap \ff^0_{(1-\eta,1]}$ is exactly the point $(0,0)$ and is transverse, since the slopes at the shock are $-1$ and $1$.

	Let us denote the family of parametrizations of $\tilde{\ff}^t_r   \cup 	\ff^r $ by 	\[g^r(t,x)= \left\{
		\begin{array}{lr}
		\left(x+H'(u_0'(tx)),\frac{u_0(tx)}{t} +u_0'(tx)H'(u_0'(tx))- H(u_0'(tx)\right)& \textrm{ if } t\neq 0,\\
		\left(x+H'(-1),-x-H'(-1)\right) & \textrm{ if } t= 0,
		\end{array}
		\right.    \]
The function $t \mapsto g^r(t,\cdot)$ is continuous on $[0,\infty)$ in the $\cc^1$-topology since the function $(t,x) \mapsto \left\{\begin{array}{l}
	u_0(tx)/t   \text{ if } t>0\\
	- x \text{ if } t=0
	\end{array}
	\right.$ is $\cc^1$ on $[0,\infty) \times [0,\infty)$. The transverse intersection hence persists by the implicit function theorem in an intersection between $\tilde{\ff}^r_t$ and $\ff^0_{(1-\eta,1]}$, since $\tilde{\ff}^r_t$ is contained in the half-plane situated over the second diagonal.
	
%

	There is no other continuous section in $\tilde{\ff}_t$: for small time $t$, $\tilde{\ff}^r_t$ and $\tilde{\ff}^\ell_t$ do not cross and do not present double points; the existence of a second continuous section would then imply the existence of an intersection between $\ff^0$ and the part of $\tilde{\ff}^r_t$ at the right of the shock, or an intersection between $\ff^0$ and $\tilde{\ff}^\ell_t$, and neither can exist, by continuity.
\end{proof}
It is now enough to prove that the obtained shock denies the Lax condition.
\begin{proof}[Proof of Proposition \ref{dim1ex}] For all $t$, the graph of a variational solution is included in the wavefront $\ff_t$. Lemma \ref{unisec} gives a $\delta>0$ for which every $\ff_t$ has a unique continuous section if $t\leq \delta$, which implies that the variational solution is given by this section for small $t$. 
	Lemma \ref{unisec} states also that this section presents a shock between  $\ff^0_t$ and ${\ff}^r_t$. 
	
	Let us prove that Lax condition is violated at this shock. A fortiori, Oleinik’s entropy condition is violated, which by Proposition \ref{RH} will imply that the variational solution is not a viscosity solution. For all $t$ in $(0,\delta)$, the shock is given by parameters
	$(q_t,p_t)$,
	such that $q_t>0$, $p_t\in [-1,1]$ and 
	\[
	\left\{\begin{array}{rcl}
	q_t+t H'\left(u_0'(q_t)\right)&=&t H'(p_t),\\
	{u_0(q_t)}+t u_0'(q_t)H'\left(u_0'(q_t)\right) - tH\left(u_0'(q_t)\right)&=&t p_tH'(p_t)-t H(p_t),
	\end{array}\right.\]
	Injecting the first equation  multiplied by $u_0'(q_t)$ into the second gives, after reorganization:
	\[t\left(H(p_t)-H(u_0'(q_t) - (p_t-u_0'(q_t))H'(p_t)\right) =q_t u_0'(q_t)- {u_0(q_t)}. \]
	The linear part of $u_0$ cancels in the right hand side, which equals $q_t f'(q_t)- {f(q_t)}$. The strict convexity of $f$ implies that $f'(h)>f(h)/h$ for all $h>0$, hence the right hand side is positive for $t>0$. As a consequence, for $t$ in $(0,\delta)$, 
	\[{H(p_t)-H(u_0'(q_t))} >\left(p_t-u_0'(q_t)\right) H'(p_t).\]
	By Proposition \ref{slopes}, the slopes at the shock are $u_0'(q_t)$ and $p_t$. Comparing with Definition \ref{oleinik}, this inequality hence reads as the opposite of the Lax condition, hence Oleinik's entropy condition is violated for the shock presented by the variational solution for $t<\delta$, and the conclusion holds.
\end{proof}

\subsection{Proof of Proposition \ref{joukrecdim1}}
The idea behind Lemma \ref{tec} is that for any non convex non concave Hamiltonian in dimension $1$, there is a frame of variables over which the Hamiltonian looks like Figure \ref{exdim1}, left.

\begin{lem}\label{tec} If $H:\rr \to \rr$ is a $\cc^2$ neither convex nor concave Hamiltonian, up to a change of function $p\mapsto H(-p)$, there exist $p_1<p_2$ such that $H''(p_2)<0$, and
	\begin{align}
	\forall p \in (p_1,p_2),\,\; \frac{H(p)-H(p_1)}{p -p_1} & <  \frac{H(p_2)-H(p_1)}{p_2 -p_1}, \label{teceq1} \\
	H'(p_1)& <\frac{H(p_2)-H(p_1)}{p_2 -p_1}=H'(p_2).	 \label{teceq2}
	\end{align}	\end{lem}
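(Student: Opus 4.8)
The plan is to produce $p_1<p_2$ so that the chord through $(p_1,H(p_1))$ and $(p_2,H(p_2))$ is the tangent to the graph of $H$ at its right end $p_2$, lies strictly above the graph on $(p_1,p_2)$, and is crossed transversally from above at $p_1$, while $H''(p_2)<0$ --- this is precisely what \eqref{teceq1}--\eqref{teceq2} say.

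\emph{Reduction.} Since $H$ is $\cc^2$, non-convexity and non-concavity mean that $\{H''>0\}$ and $\{H''<0\}$ are both non-empty open sets, and the substitution $p\mapsto H(-p)$ turns $H''$ into $p\mapsto H''(-p)$, hence exchanges these two sets through $p\mapsto -p$. Using this once, I may assume there is a point $\alpha$ with $H''(\alpha)>0$ such that $c:=\sup\{p\ge\alpha:\ H''\ge 0\text{ on }[\alpha,p]\}$ is finite (this finiteness is where the flip is used). Then $H''\ge 0$ on $[\alpha,c]$, $H''(c)=0$, points where $H''<0$ accumulate at $c$ from the right, and with some care about the behaviour of $H''$ at $c^+$ one also gets a $\nu>0$ with $H''\le 0$ on $[c,c+\nu]$ and $H''$ not identically zero on $(c,c+\nu)$. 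Since $H$ is convex on $[\alpha,c]$ and $H''(\alpha)>0$, its tangent line $L_c$ at $c$ satisfies $H\ge L_c$ on $[\alpha,c]$ with $H(\alpha)>L_c(\alpha)$.

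\emph{Construction.} Choose $p_2\in(c,c+\nu)$ with $H''(p_2)<0$ and, using continuity of $p_2\mapsto L_{p_2}$ (the tangent to the graph at $p_2$) together with $H(\alpha)>L_c(\alpha)$, close enough to $c$ that $H(\alpha)>L_{p_2}(\alpha)$. Set $\delta:=H-L_{p_2}$, so that $\delta(p_2)=\delta'(p_2)=0$ and $\delta''(p_2)<0$; thus $\delta<0$ near $p_2$ while $\delta(\alpha)>0$. Put
\[p_1:=\sup\{p<p_2:\ \delta(p)\ge 0\}\in(\alpha,p_2).\]
A continuity argument gives $\delta(p_1)=0$ and $\delta<0$ on $(p_1,p_2)$. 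As $\delta$ vanishes at $p_1$ and $p_2$, the line $L_{p_2}$ \emph{is} the chord through $(p_1,H(p_1))$ and $(p_2,H(p_2))$, of slope $H'(p_2)$; hence $\delta<0$ on $(p_1,p_2)$ is exactly \eqref{teceq1}, the equality in \eqref{teceq2} is automatic, $H''(p_2)<0$ holds by construction, and $\delta'(p_1)\le 0$ yields $H'(p_1)\le H'(p_2)$.

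\emph{Strictness.} It remains to upgrade $H'(p_1)\le H'(p_2)$ to a strict inequality, i.e. to exclude $\delta'(p_1)=0$; this is the main obstacle. Suppose $\delta'(p_1)=0$, so that $\delta(p_1)=\delta'(p_1)=0$ and $\delta<0$ on $(p_1,p_2)$. This is impossible in each possible location of $p_1$: if $p_1\in(\alpha,c)$, then $\delta$ is convex on $[p_1,c]$, hence $\ge 0$ there, contradicting $\delta<0$ on $(p_1,c]\subset(p_1,p_2)$; if $p_1=c$, then $H'(c)=H'(p_2)$, but $H''\le 0$ on $[c,p_2]$ with $H''\not\equiv 0$ makes $H'$ strictly decreasing on $[c,p_2]$, so $H'(c)>H'(p_2)$; if $p_1\in(c,p_2)$, then $\delta$ is concave on $[c,p_2]$ and vanishes with its derivative at the interior point $p_1$, so $\delta\le 0$ on $[c,p_2]$, and analysing how $p_1$ can be this supremum forces $H$ to coincide with $L_{p_2}$ on a subinterval left of $p_1$ and then, by concavity of $H$ on $[p_1,p_2]$, on all of $[p_1,p_2]$ --- contradicting $\delta<0$ on $(p_1,p_2)$. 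Finally $p_1=\alpha$ cannot occur since $\delta(\alpha)>0$. Hence $\delta'(p_1)<0$, that is $H'(p_1)<\frac{H(p_2)-H(p_1)}{p_2-p_1}=H'(p_2)$, which completes the proof. The two steps I expect to be genuinely delicate are the control of $H''$ near the transition point $c$ in the reduction, and the borderline case $p_1=c$.
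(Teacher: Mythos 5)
Your construction step is sound and does deliver $p_1<p_2$ with $H''(p_2)<0$, \eqref{teceq1} and the equality plus the weak inequality $H'(p_1)\le H'(p_2)$ in \eqref{teceq2} (indeed, for that part you only need $H''\ge 0$ on $[\alpha,c]$, $H''(\alpha)>0$, and the continuity of $p_2\mapsto L_{p_2}(\alpha)$). The problems are concentrated in the two places you yourself flagged as delicate, and in both cases the difficulty is fatal as written. First, the claim that ``one also gets a $\nu>0$ with $H''\le 0$ on $[c,c+\nu]$'' is false in general, not merely delicate: the definition of $c$ only guarantees that every interval $(c,c+\epsilon)$ meets $\{H''<0\}$, not that it avoids $\{H''>0\}$. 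For a concrete obstruction, let $K\subset[0,1]$ be the middle-thirds Cantor set, set $H''=0$ on $K$ and outside $(0,1)$, and on each complementary gap place a small bump whose sign is the parity of the stage at which the gap is removed (heights shrinking with the gaps so that $H''$ is continuous). Any one-sided neighbourhood of a point of $K$ that is not contained in a single gap contains an entire kept triadic block, hence gaps of every sufficiently large stage and so of both signs; consequently no choice of $\alpha$ produces a $c$ with $H''\le 0$ on $[c,c+\nu]$, and the mirror image has the same structure, so the flip does not help. Since your entire strictness discussion rests on the concavity of $\delta$ on $[c,p_2]$, it collapses for such an $H$.

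Second, even granting the clean sign structure, the case $p_1\in(c,p_2)$ is not actually excluded by your argument. The supremum defining $p_1$ is attained at $p_1$ itself (because $\delta(p_1)=0$), so nothing forces points of $\{\delta\ge 0\}$ to accumulate at $p_1$ from below, and hence nothing forces $\delta$ to vanish on an interval to the left of $p_1$: a profile like $\delta(p)=-(p-p_1)^2$ near $p_1$ is concave, has $\delta(p_1)=\delta'(p_1)=0$, is negative on both sides of $p_1$, and is compatible with all your constraints while giving $H'(p_1)=H'(p_2)$. This degenerate tangency is precisely the borderline situation the paper must rule out, and it does so not by contradiction but by a genuine perturbation: it moves the base points and invokes Sard's theorem to make $H'(p_2)$ a regular value of $H'$, which forces $H''(p_1)\neq 0$ and then allows a small shift of $p_2$ restoring the strict inequality. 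More generally, the paper's proof works globally with chords between two fixed points $p_1^\circ<p_2^\circ$ where $H''(p_1^\circ)>0$ and $H''(p_2^\circ)<0$, splitting according to whether the entropy condition holds between them; it never needs $H''$ to keep a definite sign on an interval, which is exactly why it survives the oscillatory examples that defeat your local convex-to-concave reduction. To repair your proof you would need both a replacement for the interval $[c,c+\nu]$ with $H''\le 0$ and a perturbation argument of Sard type for the degenerate case.
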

	In terms of Definition \ref{oleinik}, \eqref{teceq1} means that the entropy condition is strictly satisfied between $p_1$ and $p_2$, and \eqref{teceq2} that the Lax condition is an equality at $p_2$, and an inequality at $p_1$. We are now just one affine step away from the hypotheses of Proposition \ref{dim1ex}.
			

\begin{proof}
	 	If $H:\rr \to \rr$ is neither convex nor concave, there exist in particular $p_1^\circ$ and $p_2^\circ$ such that $H''(p_1^\circ)>0$ and $H''(p_2^\circ)<0$, and we may assume up to the change of Hamiltonian $p \mapsto H(-p)$ that $p_1^\circ<p_2^\circ$.	 \\	
	 	\textit{Sketch of proof.} The proof consists in choosing adequate $p_1$ and $p_2$, which will be done differently depending on the entropy condition between $p_1^\circ$ and $p_2^\circ$ being satisfied or not. An impatient reader could be satisfied by the choice of $p_1$ and $p_2$ suggested in Figure \ref{fig2}. If the entropy condition is denied, we take $p_1=p_1^\circ$ and $p_2$ such that the slope of the cord joining $p_1$ and $p_2$ is maximal. We then need to slightly perturb $p_1$ in order to get the condition $H''(p_2)<0$. If the entropy condition is satisfied, we take $p_2=p_2^\circ$  and $p_1$ is given by the last (before $p_2$) intersection between the tangent at $p_2$ and the graph of $H$. Again, a perturbation will be done to ensure that $H'(p_1)<H'(p_2)$.
	 	\begin{figure}[h]
	 		\begin{minipage}{\textwidth}
	 			\begin{center}
	 				\def\svgwidth{0.9\columnwidth} 
	 				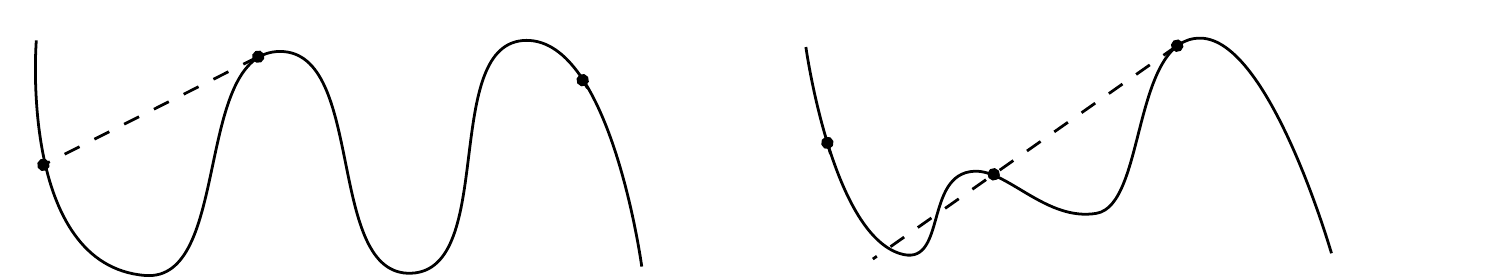 		
	 			\end{center}
	 			\caption{Both figures present a graph of $H$ with a dashed tangent at $p_2$. Left: entropy condition denied between $p_1^\circ$ and $p_2^\circ$. Right: entropy condition satisfied between $p_1^\circ$ and $p_2^\circ$.}
	 			\label{fig2}
	 		\end{minipage}
	 	\end{figure}	
	 	
	 	\begin{itemize}
	 	\item 	If the entropy condition is denied between $p_1^\circ$ and $p_2^\circ$, we define $p_1=p_1^\circ$ and
	 	\[p_2=\inf\left\{p\in(p_1,p_2^\circ), \;\,\frac{H(p)-H(p_1)}{p-p_1}=\sup_{\tilde{p}\in (p_1,p_2^\circ]}\frac{H(\tilde{p})-H(p_1)}{\tilde{p}-p_1} \right\}.\]
	 	Let us show that these quantities are well defined, and prove \eqref{teceq1} and \eqref{teceq2}.
	 	
	 	The function $f:p\mapsto \frac{H(p)-H(p_1)}{p-p_1}$ may be extended continuously at $p_1$ by $H'(p_1)$, hence it reaches a maximum $M$ on $[p_1,p_2^\circ]$. It cannot be attained at $p_1$, or else the Taylor expansion of $\frac{H(p)-H(p_1)}{p-p_1} \leq  H'(p_1)$ gives that $H''(p_1)\leq 0$, which is excluded. As a consequence $M>H'(p_1)$. It cannot be attained at $p_2^\circ$ because $\frac{H(p)-H(p_1)}{p-p_1} \leq \frac{H(p_2^\circ)-H(p_1)}{p_2^\circ-p_1}$ for all $p$ in $[p_1,p_2^\circ)$ if and only if the entropy condition is satisfied between $p_1$ and $p_2^\circ$, which is excluded.
	 	We hence proved that the supremum is attained on $(p_1,p_2^\circ)$. The infimum hence exists and belongs to $[p_1,p_2^\circ)$. By continuity of $f$, $f(p_2)=M$. This implies that $p_2 > p_1$ since $f(p_1)=H'(p_1)<M$, hence the infimum is a minimum. The equality \eqref{teceq1} follows directly from the definition of $p_2$.	
	 	 	
	 	Since $p_2$ is in $(p_1,p_2^\circ)$ and maximizes $f$, it is a critical point of $f$, which gives  $H'(p_2)=\frac{H(p_2)-H(p_1)}{p_2-p_1}=M$. Since $H'(p_1)<M$, \eqref{teceq2} is proved.\\
	 	
	 	Since $p_2$ maximizes $f$, $f''(p_2)\leq 0$ and as a consequence $H''(p_2)\leq 0$. In order to get $H''(p_2)<0$, let us prove that if $p_2^\circ$ is fixed, $p_1 \mapsto H'(p_2)$ is increasing in a neighbourhood of $p_1$.
	 	
	 	For $\ep>0$ small enough, $p_1+\ep <p_2$, $H''(p_1+\ep)>0$ and the entropy condition is denied between $p_1+\ep$ and $p_2^\circ$. We denote by $p_{2,\ep}$ the quantity associated with $p_1+\ep$ and $p_2^\circ$ as previously.
	 	
	 	On one hand, by definition of $p_2$, the entropy condition is strictly satisfied between $p_1$ and $p_2$, and in particular since $p_1+\ep$ is in $(p_1,p_2)$, \[
	 	\frac{H(p_2)-H(p_1+\ep)}{p_2-(p_1+\ep) } >\frac{H(p_2)-H(p_1)}{p_2-p_1}= H'(p_2).
	 	\]
	 	On the other hand, the previous work applied to $p_{2,\ep}$ gives that \[H'(p_{2,\ep})= \max_{p\in (p_1+\ep,p_2^\circ]} \frac{H(p)-H(p_1+\ep)}{p-(p_1+\ep) } \geq \frac{H(p_2)-H(p_1+\ep)}{p_2-(p_1+\ep) },\]
	 	and the two inequalities combined give that $H'(p_{2,\ep})>H'(p_2)$.
	 	
	 	Since $p_1 \mapsto H'(p_2)$ is increasing in a neighbourhood of $p_1$, using the Sard's theorem, we may assume without loss of generality that ${H}'(p_2)$ is a regular value of ${H}'$, up to a perturbation of $p_1$ within the open set $\{H''>0\}$. As a consequence, ${H}''(p_2)<0$, and the couple $(p_1,p_2)$ satisfies Lemma \ref{tec}.\\
	 	 
	 	 \item 	If the entropy condition is satisfied between $p_1^\circ$ and $p_2^\circ$, we define $p_2=p_2^\circ$ and
	 	 \begin{equation} \label{p_1} p_1= \sup \left\{p_1^\circ \leq p\leq p_2\, \left|\, \frac{H(p_2)-H(p)}{p_2-p} =H'(p_2)\right.  \right\}.\end{equation}
	 	 As $H''(p_2)$ is negative, the graph of $H$ is situated strictly under its tangent at $p_2$ over a neighbourhood of $p_2$, hence $ \frac{H(p_2)-H(p)}{p_2-p} >H'(p_2)$ on this neighbourhood. The entropy condition satisfied between $p_1^\circ$ and $p_2$ implies the Lax condition $\frac{H(p_2)-H(p_1^\circ)}{p_2-p_1^\circ} \geq H'(p_2)$. By the mean value theorem, the considered set is non empty and its supremum belongs to $[p_1^\circ,p_2)$, and by continuity of $p\mapsto  \frac{H(p_2)-H(p)}{p_2-p}$, we have that $\frac{H(p_2)-H(p_1)}{p_2-p_1} = H'(p_2)$. The entropy condition is strictly satisfied between $p_1$ and $p_2$ by maximality of $p_1$. The mean value theorem and the maximality of $p_1$ makes it clear that $H'(p_1)\leq H'(p_2)$ and that if $H'(p_1)=H'(p_2)$, $H''(p_1)\leq 0$. Let us prove that up to a perturbation we can assume $H'(p_1)<H'(p_2)$. 
	 	 
	 	Let us hence assume that $H'(p_1)=H'(p_2)$. First, by Sard's theorem, up to a perturbation of $p_2^\circ$, we may assume that $H'(p_2^\circ)$ is not a critical value of $H'$, which ensures since $H'(p_1)=H'(p_2^\circ)$ that $H''(p_1)$ is nonzero, hence negative (note that the sign of $H''(p_1^\circ)$ had no influence in the previous paragraph). We denote $p_1^\circ=p_1$ and look at the previous construction for this $p_1^\circ$ fixed and for a new $p_2$ close to $p_2^\circ$.  Without loss of generality we suppose that $H'(p_2^\circ)=H'(p_1^\circ)=H(p_2^\circ)=H(p_1^\circ)=0$. Since $H''(p_1^\circ)$ and $H''(p_2^\circ)$ are negative, there exists $\delta$ such that $H''$ is negative on $[p_1^\circ,p_1^\circ+\delta]\cup [p_2^\circ - \delta,p_2^\circ]$. By compacity, $H$ admits a maximum on $[p_1^\circ + \delta,p_2^\circ-\delta]$ which is negative, since the entropy condition is strictly satisfied between $p_1^\circ$ and $p_2^\circ$.
	 	
	 	Since $H'$ is decreasing on $[p_2^\circ-\delta,p_2^\circ]$, there exists $p_2 \in [p_2^\circ-\delta,p_2^\circ]$ such that $0<H'(p_2)< -\frac{m/2}{p_2^\circ-p_1^\circ}$. For such a $p_2$, the tangent of the graph of $H$ at $p_2$ lies strictly below the graph of $H$ over $[p_1^\circ+\delta,p_2^\circ-\delta]$ by definition of $m$, and also over $[p_2^\circ-\delta,p_2^\circ]$ by concavity of $H$. Equation \eqref{p_1} then defines a $p_1$ which is necessarily in $(p_1^\circ,p_1^\circ +\delta]$: as $H(p_1^\circ)=0$ and $H(p_2)<0$, the point $(p_1^\circ,H(p_1^\circ))$ is situated over the tangent of the graph of $H$ at $p_2$ which has a positive slope $H'(p_2)$. By concavity of $H$ on $[p_1^\circ,p_1^\circ+\delta]$, $H'(p_1)<H'(p_1^\circ)=0$, and as a consequence $H'(p_1)<H'(p_2)$.	 	
	 	The previous work proves that all the conditions of the proposition are then gathered for $p_1$ and $p_2$.		
	 	\end{itemize}
\end{proof}
We may now prove Proposition \ref{joukrecdim1}, joining Lemma \ref{tec} and Proposition \ref{dim1ex}.
\begin{proof}[Proof of Proposition \ref{joukrecdim1}.]
Let $H$ be a non convex non concave Hamiltonian with bounded second derivative. Using Proposition \ref{propred}-\eqref{afftrans} with $A=-{\rm id}$, we may apply Lemma \ref{tec} up to the change of function $p\mapsto H(-p)$. It gives $p_1<p_2$ such that $H''(p_2)< 0$, and  $ H'(p_1)<\frac{H(p)-H(p_1)}{p -p_1}<H'(p_2)=\frac{H(p_2)-H(p_1)}{p_2 -p_1}$ for all $p$ in $(p_1,p_2)$. 
	We  define \[
	\tilde{H}(p)= H(p)-H(p_2)- (p-p_2) H'(p_2),
	\]
	so that $\tilde{H}(p_2)=\tilde{H}'(p_2)=\tilde{H}(p_1)=0$. Note also that $\tilde{H}'(p_1)=H'(p_1)-H'(p_2)<0$.	
	The second order derivatives as well as the entropy condition are preserved by this transformation:  $\tilde{H}''(p_2)=H''(p_2)< 0$, and $\tilde{H}<0$ on $(p_1,p_2)$. 
	
	At last, we take the affine transformation $\phi:\rr \to \rr$ such that $\phi(-1)=p_1$ and $\phi(1)=p_2$ and define \[
	\bar{H}(p)=\tilde{H}(\phi(p)),
	\]
	so that $\bar{H}$ satisfies the assumptions of Proposition \ref{dim1ex}: $\bar{H}(-1)=\bar{H}(1)=\bar{H}'(1)=0$, $\bar{H}'(-1)<0$ since $\phi'>0$, $\bar{H}''(1)<0$ and $\bar{H}<0$ on $(-1,1)$. Proposition \ref{dim1ex} then gives a Lipschitz semiconcave
	initial condition $\bar{u}_0$ such that the variational solution denies the \eqref{HJ} equation associated with $\bar{H}$ for all $t$ small enough. Proposition \ref{propred}-\eqref{afftrans} applied to the two successive transformations gives then a Lipschitz semiconcave initial condition $u_0$, with right and left derivatives at $0$ respectively equal to $p_1$ and $p_2$, such that the variational solution denies the \eqref{HJ} equation associated with $H$ for all $t$ small enough.
	
	\end{proof}

\section{Example for the saddle Hamiltonian: proof of Proposition \ref{contrex}}\label{ce2}

In this paragraph we assume that $H(p_1,p_2)=p_1p_2$, with $(p_1,p_2)\in \rr^2$, and prove Proposition \ref{contrex} by presenting a suitable initial condition.



We choose an initial condition that coincide with the piecewise quadratic function $u(q_1,q_2)=\min\left(a(q_1^2-q_2),b(q_1^2-q_2)\right)$ on a large enough subset while being Lipschitz and semiconcave. We explicit the value of the variational solution for this initial condition on a large enough subset.
	\begin{prop} \label{exdim2} 
		Let $f: \rr \to \rr$ be a compactly supported $\cc^2$ function coinciding with $x \mapsto x^2$ on $[-1,1]$. 
		
		Let $u(q_1,q_2)= \min \left(a(f(q_1)-q_2),b(f(q_1)-q_2)\right)$ with $b>a>0$.
		
		Then if $-1 \leq   q_1\leq -\frac{3b}{2}t$, \[R^t_0 u(q_1,q_2)= \min\left(a((q_1+at)^2-q_2),b((q_1+bt)^2-q_2)\right).\]\end{prop}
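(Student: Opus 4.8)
The plan is to combine the minimal-section description of $R^t_0u$ coming from Proposition~\ref{varmin} with an explicit analysis of the three families of characteristics issued from $u$. First I would check that $u=\min\bigl(a(f(q_1)-q_2),b(f(q_1)-q_2)\bigr)$ is Lipschitz and $B$-semiconcave with $B=b\|f''\|_\infty$ (each branch is, being $\cc^2$ with bounded Hessian, and a minimum of $B$-semiconcave functions is $B$-semiconcave). Since $\|\dd^2H\|=1$ for $H(p_1,p_2)=p_1p_2$, Proposition~\ref{varmin} gives, for $t>0$ small enough,
\[ R^t_0u(q)=\inf\Bigl\{\,u(q_0)+\aaa^t_0(\gamma)\ \Big|\ p_0\in\dd u(q_0),\ Q^t_0(q_0,p_0)=q\,\Bigr\}, \]
where for this integrable Hamiltonian $Q^t_0(q_0,p_0)=(q_{0,1}+tp_{0,2},\,q_{0,2}+tp_{0,1})$ and $\aaa^t_0(\gamma)=t\,p_{0,1}p_{0,2}$.

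Next I would split the admissible preimages according to the Clarke derivative of $u$, which equals $\{(af'(q_1),-a)\}$ on $\{q_2<f(q_1)\}$, equals $\{(bf'(q_1),-b)\}$ on $\{q_2>f(q_1)\}$, and equals the segment $\{(cf'(q_1),-c):c\in[a,b]\}$ on the shock line $\{q_2=f(q_1)\}$. Using $f(x)=x^2$ on $[-1,1]$, a direct computation shows that any preimage of a point $(q_1,q_2)$ with $-1\le q_1\le-\tfrac{3b}{2}t$ has first coordinate $q_{0,1}=q_1+ct$ for some $c\in[a,b]$, which lies in $(-1,-\tfrac{b}{2}t)\subset(-1,0)$ — so the compactly supported tail of $f$ never intervenes and the computation is exact — and that:
\begin{itemize}
\item the left-branch preimage ($c=a$) contributes $V_L:=a\bigl((q_1+at)^2-q_2\bigr)$ and is admissible iff $q_2\le(q_1+at)(q_1+3at)$;
\item the right-branch preimage ($c=b$) contributes $V_R:=b\bigl((q_1+bt)^2-q_2\bigr)$ and is admissible iff $q_2\ge(q_1+bt)(q_1+3bt)$;
\item a shock-line preimage with parameter $c$ contributes $V_0:=-2c^2t\,q_{0,1}$.
\end{itemize}

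The heart of the proof is then an elementary discussion in $q_2$ resting on sign computations that all use $q_1\le-\tfrac{3b}{2}t$. One finds $(q_1+at)(q_1+3at)-(q_1+bt)(q_1+3bt)=(a-b)t\bigl(4q_1+3(a+b)t\bigr)>0$, so the two admissibility intervals overlap and cover $\rr$; writing $q_2^\ast=q_1^2+2(a+b)tq_1+(a^2+ab+b^2)t^2$ for the level where $V_L=V_R$, one finds $q_2^\ast-(q_1+bt)(q_1+3bt)>0$ and $q_2^\ast-(q_1+at)(q_1+3at)<0$, so $V_L<V_R$ below the overlap and $V_R<V_L$ above it. Hence for every $q_2$ some admissible branch preimage realizes exactly $\min(V_L,V_R)$ — at the boundaries $q_2=(q_1+at)(q_1+3at)$ or $q_2=(q_1+bt)(q_1+3bt)$ the relevant branch preimage degenerates onto the shock line but is still a legitimate wavefront point, being an endpoint $c=a$ or $c=b$ of the fan — which gives $R^t_0u(q)\le\min(V_L,V_R)$. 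For the reverse inequality it suffices to bound the shock-line family: a computation gives $V_0-V_L=-(c-a)^2t\,(2q_{0,1}+at)$, and since $q_{0,1}=q_1+ct\le-\tfrac{b}{2}t$ we get $2q_{0,1}+at\le(a-b)t<0$, whence $V_0\ge V_L\ge\min(V_L,V_R)$; as the branch preimages contribute $V_L$ and $V_R$, every admissible preimage contributes at least $\min(V_L,V_R)$, and the two bounds yield the claimed equality.

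I expect the shock-line (fan) family to be the main obstacle: one must rule out that some trajectory issued from the singular locus of $u$ with an intermediate slope $c\in(a,b)$ undercuts the two smooth branches, and the inequality $q_{0,1}\le-\tfrac{b}{2}t$ — which is precisely what the hypothesis $q_1\le-\tfrac{3b}{2}t$ buys — is exactly what makes all the relevant signs cooperate. A secondary point requiring care is the admissibility bookkeeping on the two boundary curves, and the verification that no preimage with $q_{0,1}$ outside $[-1,1]$ can reach the region under consideration, so that replacing $f$ by $x\mapsto x^2$ throughout the computation is harmless there.
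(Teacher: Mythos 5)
Your computations check out (I verified the values $V_L$, $V_R$, $V_0=-2c^2tq_{0,1}$, the admissibility thresholds, the sign of $(a-b)t(4q_1+3(a+b)t)$, the location of $q_2^\ast$ inside the overlap, and the identity $V_0-V_L=-(c-a)^2t(2q_{0,1}+at)$ for an admissible fan preimage), so the argument is correct, but it takes a genuinely different route from the paper. The paper does not dissect the wavefront into the two smooth branches plus the fan: it observes that $u=\min_{c\in[a,b]}u_c^0$ with $u_c^0=c(f(q_1)-q_2)$, checks that this family realizes the whole Clarke derivative of $u$ (the hypothesis of Lemma \ref{lem}), and invokes Proposition \ref{minf} to get $R^t_0u=\min_{c\in[a,b]}u_c(t,\cdot)$ with $u_c(t,q)=c(f(q_1+ct)-q_2)$ the globally defined classical solutions; the conclusion then follows from the single observation that $c\mapsto c((q_1+ct)^2-q_2)$ has second derivative $2t(2q_1+3ct)\le 0$ for $q_1\le-\tfrac{3b}{2}t$, so the minimum over $[a,b]$ sits at an endpoint. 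That concavity-in-$c$ argument replaces all of your case analysis in $q_2$ and your separate treatment of the fan. What your approach buys is a completely explicit picture of which characteristic realizes the minimum and where the admissibility regions overlap, which the paper only displays in Figure \ref{varsolex}.

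One point to tighten: you invoke Proposition \ref{varmin}, which only guarantees the minimal-section formula for $t<1/BC$ with $B=b\|f''\|_\infty\geq 2b$, whereas the statement claims the formula on the whole range where $[-1,-\tfrac{3b}{2}t]$ is nonempty, i.e.\ up to $t=2/3b>1/2b\geq 1/B$. So as written you prove the identity only for $t$ small enough, not for the full range asserted. This is harmless for the application in Proposition \ref{contrex} (there $t<1/2L$ with $L\geq B$), but to get the stated range you should, as the paper does, route through Proposition \ref{minf}, whose validity is limited only by the shared existence time of the classical solutions $u_c$ — which here is infinite, since each characteristic map $(q_1,q_2)\mapsto(q_1-ct,\,q_2+tcf'(q_1))$ is a global diffeomorphism for every $t$.
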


		\begin{figure}[h!]
			\begin{center}
				\def\svgwidth{\columnwidth} 
				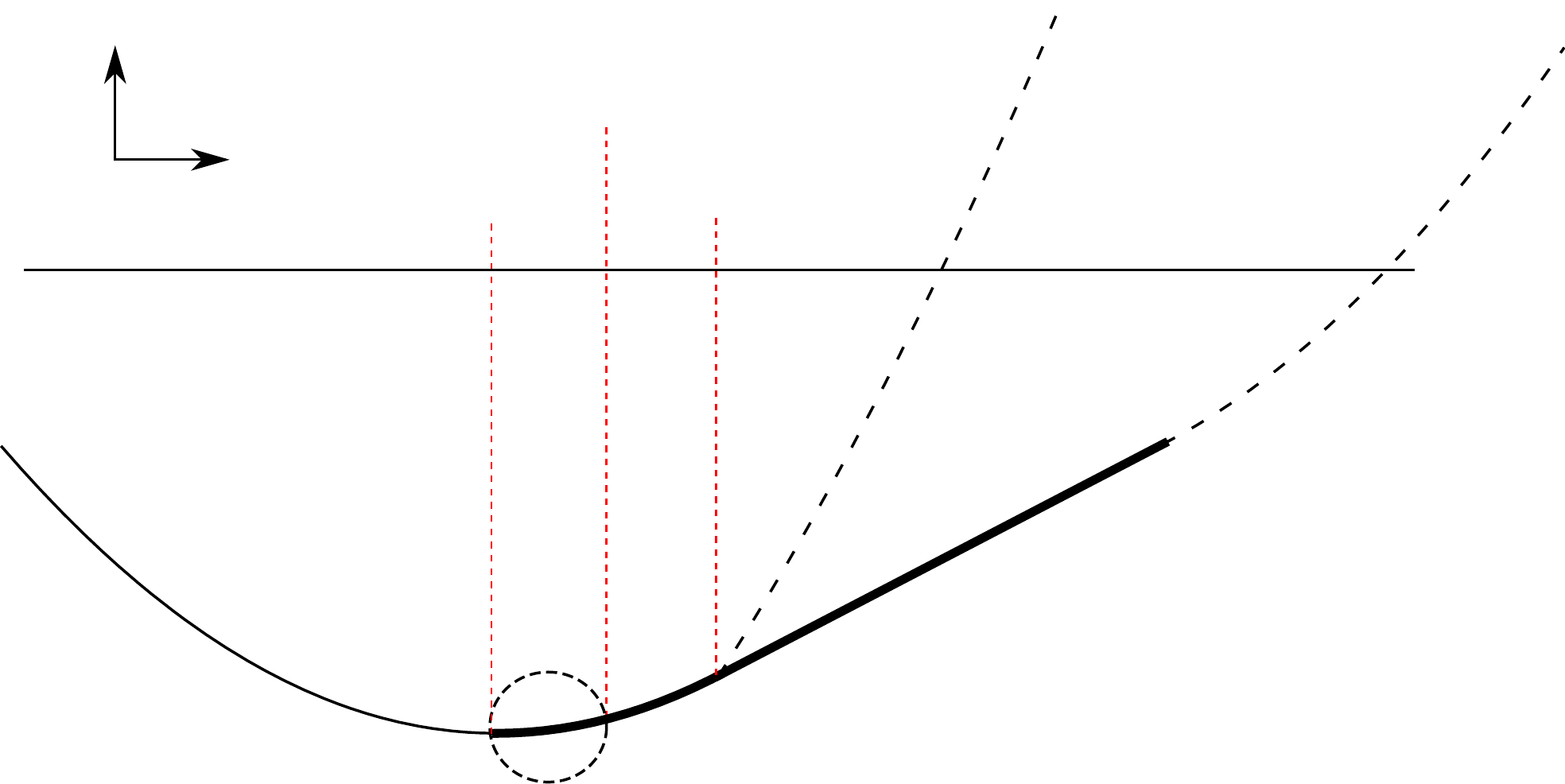 
			\end{center}
			\caption{Value of the variational solution associated with $u$ at time $t$, here for $a=1$, $b=2$ and $t=1/10$.}
			\label{varsolex}
		\end{figure}
		
On Figure \ref{varsolex} can be read the explicit value of the variational solution for small time $t$, which is given by the unique continuous section in the wavefront. The plain curve represents a shock of the variational solution, whereas the different expressions coincide $\cc^1$-continuously along the dotted curves. One can show that the variational solution denies the Hamilton--Jacobi equation in the viscosity sense along the thick portion of the shock, and also that it does satisfy the Hamilton--Jacobi equation in the viscosity sense everywhere except on this portion. For the purpose of this article, it is enough to show that the variational solution denies the Hamilton--Jacobi equation in the viscosity sense along the parabola circled in Figure \ref{varsolex}. This is included in the domain concerned by Proposition \ref{exdim2}, which can be proved by using an efficient convexity argument that spares ourselves many computations.

\begin{proof}[Proof of Proposition \ref{exdim2}] Using general arguments stated in Section \ref{proofvarmin}, we are first going to prove that
	\[R^t_0 u(q_1,q_2)= \min_{c\in[a,b]} u_c(t,q_1,q_2) \,\;\,\; \forall\, t\geq 0, (q_1,q_2) \in \rr^2,  \]
	where $u_c(t,q_1,q_2)= c(f(q_1+ct)-q_2)$ is the unique $\cc^2$ solution of the Cauchy problem associated with $H(p_1,p_2)=p_1p_2$ and the initial condition $u_c^0:(q_1,q_2)\mapsto c(f(q_1)-q_2)$. 
	
 We want to apply Proposition \ref{minf}, observing that $u= \min_{c\in [a,b]} u^0_c$. To do so, we only need to check that the family $\{u_c^0,c\in [a,b]\}$ satisfies the conditions of Lemma \ref{lem}, \emph{i.e.} that for all $(q,p)$ in the graph of the Clarke derivative $\dd u$, there exists $c\in [a,b]$ such that $u_c^0(q)=u(q)$ and $du_c^0(q)=p$.
	
	Let us compute the Clarke derivative of $u$. If $f(q_1)> q_2$, $u(q_1,q_2)= a(f(q_1)-q_2)$ on a neighbourhood of $(q_1,q_2)$, hence $\dd u(q_1,q_2)$ is reduced to the point $a \left(\begin{array}{c}
	f'(q_1)\\
	-1
	\end{array}\right)$ which is also the derivative of $u_a^0$ at $(q_1,q_2)$. 
	If $f(q_1)<q_2$, $\dd u(q_1,q_2)$ is reduced to the point  $b\left(\begin{array}{c}
	f'(q_1)\\
	-1
	\end{array}\right)$ which is also the derivative of $u_b^0$ at $(q_1,q_2)$. 
	If $f(q_1)=q_2$, $\dd u(q_1,q_2)$ is the segment $\left\{c\left(\begin{array}{c}
	f'(q_1)\\
	-1
	\end{array}\right),c\in [a,b] \right\}$. For all $c \in [a,b]$, $c\left(\begin{array}{c}
	f'(q_1)\\
	-1
	\end{array}\right)$ is the derivative of $u_c^0$ at the point $(q_1,q_2=f(q_1))$. 
	
	We hence proved that the family $\{u_c^0,c\in [a,b]\}$ satisfies the condition of Lemma \ref{lem}, hence by Proposition \ref{minf}
	\[R^t_0 u(q_1,q_2)= \min_{c\in[a,b]} u_c(t,q_1,q_2) \,\;\,\; \forall\, t\geq 0, (q_1,q_2) \in \rr^2.\]

Now, for all $-1 < q_1 < -bt$, $f(q_1+ct)=(q_1+ct)^2$ since $f(x)=x^2$ for $x$ in $[-1,1]$, and $c \in (0,b]$. Hence if $-1 < q_1 < -bt$,
	\[R^t_0 u(q_1,q_2)= \min_{c\in[a,b]} c\left( (q_1+ct)^2-q_2  \right).\]
The second derivative of $g:c \mapsto  c\left( (q_1+ct)^2-q_2  \right)$ is $g''(c) = 2t(2q_1+3ct)$. Hence if $q_1 < -\frac{3b}{2}t$, $g$ is concave on $[a,b]$ and the minimum defining $R^t_0 u(q_1,q_2)$ is attained at an endpoint of $[a,b]$.

Thus, we proved that for $-1<q_1< -\frac{3b}{2}t$,
\[R^t_0 u(q_1,q_2)= \min \left(a( (q_1+at)^2-q_2), b( (q_1+bt)^2-q_2)  \right).\]

\end{proof}	
	
\begin{proof}[Proof of Proposition \ref{contrex}]	
		Let $b>0$, $a \in (b/2,b)$ and $u$ be defined as in Proposition \ref{exdim2}: $f$ is a compactly supported $\cc^2$ function coinciding with $x \mapsto x^2$ on $[-1,1]$ and \[u(q_1,q_2)=\min\left(a(f(q_1)-q_2),b(f(q_1)-q_2)\right).\] 
		
		We define $u_a:(t,q_1,q_2) \mapsto a((q_1+at)^2-q_2)$ and $u_b:(t,q_1,q_2) \mapsto b((q_1+bt)^2-q_2)$ (note that the notations slightly differ from the previous proof), so that
		Proposition \ref{exdim2} gives that for $-1<q_1< -\frac{3b}{2}t$,
		\[R^t_0 u(q_1,q_2) = \min(u_a(t,q_1,q_2),u_b(t,q_1,q_2)).\]

	Let us prove that this variational solution denies the Hamilton--Jacobi equation at the point $\left(t,q_1,q_2\right)$ if
	 \[ \left\{\begin{array}{l}
q_2=q_1^2+2(a+b)tq_1 + t^2 (a^2+ab+b^2),\\
-1 <q_1<-\frac{3b}{2}t,\\
-(a+b)t< q_1.
	 \end{array}   \right. \]
	This corresponds to the piece of parabola circled on Figure \ref{varsolex}, which exists only if $a>b/2$ and $t< 2/3b$. Note that the first line is just an equation of this parabola, which is obtained by solving $u_a=u_b$.
	
Let us exhibit a test function denying the viscosity equation:  we define the mean function $\phi=\frac{1}{2}(u_a+u_b)$
	which is $\cc^1$, larger than $\min(u_a,u_b)$ on a neighbourhood of $(t,q_1,q_2)$ and equal to it at $(t,q_1,q_2)$ since $u_a(t,q_1,q_2)=u_b(t,q_1,q_2)$, so that $R^t_0 u-\phi$ attains a local maximum at $(t,q_1,q_2)$.
	The derivatives of $\phi$ are given by \[
	\begin{array}{l}
	\dd_t \phi(t,q_1,q_2)=a^2 (q_1+at)+b^2(q_1+bt),\\
	\dd_{q_1} \phi(t,q_1,q_2) = a (q_1+at) + b(q_1+bt),\\
	\dd_{q_2} \phi(t,q_1,q_2)= -\frac{1}{2}(a+b).
	\end{array}
	\]
	We compute \[\begin{split}
	\dd_t \phi(t,q_1,q_2) &+ H(\dd_q \phi(t,q_1,q_2))\\
	&= a^2 (q_1+at)+b^2(q_1+bt) -\frac{1}{2}(a+b)\left(a (q_1+at) + b(q_1+bt)\right)\\
	&=\frac{1}{2}(a-b)^2(at+bt+q_1)>0
	\end{split}
	\]
	when $q_1 > -(a+b)t$, and as a consequence the variational solution is not a viscosity subsolution at the point $(t,q_1,q_2)$.
	
Note that $b$ can be chosen as small as needed, and hence for all $L$ we are able to take the initial condition $u$ $L$-Lipschitz and $L$-semiconcave, with $b\leq L$. The previous work shows that for all $t< 2/3b$, the variational solution denies the Hamilton--Jacobi solution in the viscosity sense at some point $(t,q)$. But since $L\geq b$, $1/2L \leq 2/3b$ and we hence proved Proposition \ref{contrex}.
\end{proof}

\section{Proof of Theorem \ref{joukrec}} \label{u0C2}
In this section we will deduce Theorem \ref{joukrec} from Corollary \ref{joukrecsc}. To do so, we approach the Lipschitz initial condition of Corollary \ref{joukrecsc} by a smooth initial condition, keeping the Hausdorff distance between the (Clarke) derivatives small. We will use elementary properties of the Hausdorff distance, stated in Lemma \ref{contdh} and Lemma \ref{ete}, and proved for completeness.

The Hausdorff distance  $d_{\rm Haus}$  is defined (though not necessarily finite) by
\[ d_{\rm Haus}(X,Y)= \sup \left( \sup_{x\in X} d(x,Y)\,,\,\sup_{y\in Y} d(y,X)\right) \]
for $X$ and $Y$ closed subsets of a metric space $(E,d)$ ($d$ being the euclidean distance on $\rr^d$ in our context). 
The following approximation result is proved in \cite{rifford}, Theorem $2.2$ and its Corollary $2.1$:
\begin{theo}\label{rifczarn}
	If $u:\rr^d \to \rr$ is locally Lipschitz, there exists a sequence of smooth functions $u_n$ such that
	\[\begin{array}{l}
	\lim_{n \to \infty} \|u_n - u\|_{\infty} = 0,\\
	\lim_{n \to \infty} d_{\rm Haus} \left({\rm graph}(d u_n ), {\rm graph}(\dd u)\right) = 0,
	\end{array}\]
where $\dd$ denotes the Clarke derivative.
\end{theo}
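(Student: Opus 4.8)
The plan is to obtain the $u_n$ by mollification. Fix a smooth, nonnegative, even bump $\rho$ supported in the unit ball with $\int \rho = 1$, set $\rho_\ep(y)=\ep^{-d}\rho(y/\ep)$, and put $u_n = u * \rho_{\ep_n}$ for a sequence $\ep_n \downarrow 0$. Smoothness is immediate, and since $u$ is locally Lipschitz the uniform convergence $u_n \to u$ on compacts is standard, with rate controlled by $\ep_n$ times the local Lipschitz constant. The whole difficulty is therefore concentrated in the graphical convergence $d_{\rm Haus}({\rm graph}(d u_n),{\rm graph}(\dd u)) \to 0$, which I would split into the two Kuratowski inclusions. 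By Rademacher's theorem $\nabla u$ exists almost everywhere, one has $\nabla u(z) \in \dd u(z)$ wherever it exists, and differentiating under the integral gives $d u_n(x) = \int \nabla u(x-y)\,\rho_{\ep_n}(y)\,dy$, exhibiting $d u_n(x)$ as an average of reachable gradients taken over the $\ep_n$-ball around $x$. I would first reduce to a fixed compact set, on which $\dd u$ is bounded and uniformly upper semicontinuous, and recover the global statement by exhaustion.

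The easy inclusion is that ${\rm graph}(d u_n)$ stays in a small neighbourhood of ${\rm graph}(\dd u)$. Fix $\delta>0$. By upper semicontinuity of the set-valued map $z \mapsto \dd u(z)$ there is $r>0$ such that $\dd u(z) \subset \dd u(x) + B(0,\delta)$ whenever $|z-x|\le r$; take $n$ large so that $\ep_n \le r$. For such $n$ every value $\nabla u(x-y)$ appearing in the integral lies in the convex closed set $\dd u(x)+B(0,\delta)$, hence so does its $\rho_{\ep_n}$-average $d u_n(x)$, since a convex set is closed under averaging of its points. Choosing $p \in \dd u(x)$ nearest to $d u_n(x)$ shows that $(x,d u_n(x))$ lies within $\delta$ of ${\rm graph}(\dd u)$, uniformly in $x$ on the compact set.

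The delicate inclusion, and the main obstacle, is the reverse one: every $(x_0,p)$ with $p \in \dd u(x_0)$ must be approximated by some $(x_n,d u_n(x_n))$ with $x_n \to x_0$. When $p$ is a reachable gradient, $p = \lim_k \nabla u(z_k)$ with $z_k \to x_0$, this is comparatively soft: at points of continuity of $\nabla u$ the average $d u_n$ stays close to the pointwise value, so a diagonal choice $x_n = z_{k(n)}$ works. The problem is a general $p \in \dd u(x_0)$, which is only a convex combination of reachable gradients and need not be any mollified value read off a single nearby point. Here the averaging estimate of the previous paragraph is useless, since it only confines $d u_n$ to the convex hull, whereas now I must show that the continuous map $x \mapsto d u_n(x)$ actually sweeps out that whole hull as $x$ ranges over $B(x_0,\ep_n)$. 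I would establish this by a connectedness/degree argument: after rescaling $x = x_0 + \ep_n \xi$, the maps $\xi \mapsto d u_n(x_0+\ep_n\xi)$ are continuous on the unit ball, their boundary values approximate the extreme (reachable) gradients in the correct angular pattern, and a nonvanishing-degree argument forces the image to cover the interior of $\dd u(x_0)$ up to an $o(1)$ error. Carathéodory's theorem reduces the bookkeeping to at most $d+1$ reachable gradients.

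Combining the two inclusions gives $d_{\rm Haus}({\rm graph}(d u_n),{\rm graph}(\dd u)) \to 0$ on each compact set, and the exhaustion of the first step upgrades this to the stated convergence together with $\|u_n - u\|_\infty \to 0$. I expect the degree/sweeping step of the third paragraph to be the genuine heart of the argument; the remainder is mollifier bookkeeping together with standard properties of the Clarke subdifferential, namely its local boundedness, its upper semicontinuity, and its description as the convex hull of reachable gradients.
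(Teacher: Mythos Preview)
The paper does not prove this statement; it is quoted from \cite{rifford} (Theorem~2.2 and Corollary~2.1 there) and used as a black box, so there is no proof in the paper to compare your attempt against.

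On the substance of your sketch: mollification is indeed the tool used in \cite{rifford}, and your treatment of the first inclusion (each $(x,du_n(x))$ lies close to ${\rm graph}(\dd u)$) via upper semicontinuity of $\dd u$ and convex averaging is correct and standard. The reverse inclusion, however, is left as a genuine gap. Your proposed degree argument is not carried out, and the assertion that the boundary values of $\xi\mapsto du_n(x_0+\ep_n\xi)$ ``approximate the extreme reachable gradients in the correct angular pattern'' so as to force nonzero degree with respect to an arbitrary interior point of $\dd u(x_0)$ is neither proved nor obvious: for a general Lipschitz $u$ there is no a priori control on how the reachable gradients are distributed around $x_0$, and nothing prevents the boundary image from collapsing or winding trivially. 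This is exactly the nontrivial content of the Czarnecki--Rifford result, and it requires a real argument beyond what you have sketched. As written, your proposal correctly identifies the structure of the proof and the location of the difficulty, but does not close the main gap; you should consult \cite{rifford} for that step.
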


Here is a sketch of the proof: for  $H$ an integrable non convex non concave Hamiltonian with bounded second derivative, Corollary \ref{joukrecsc} gives a Lipschitz initial condition $u_L$ such that the graph of the viscosity solution is not included in the wavefront $\ff_{u_L}$ for some time $t>0$. We are going to approach $u_L$ by a Lipschitz smooth function $u$ such that both the viscosity solutions at time $t$ are close, and the Hausdorff distance between the wavefronts at time $t$ is small. The following enhanced triangle inequality will conclude that the graph of the viscosity solution associated with $u$ is not included in the wavefront $\ff_u$.

	\begin{lem}[Enhanced triangle inequality] \label{ete} If $(E,d)$ is a metric space, $X$ and $Y$ are subsets of $E$, then for all $x$ and $y$ in $E$,
		\[d(x,X)\leq d(x,y) + d(y,Y) + d_{\rm Haus} (X,Y).\]
	\end{lem}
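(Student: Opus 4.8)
The plan is to unwind the two definitions in play — $d(\cdot,\cdot)$ to a set as an infimum, and $d_{\rm Haus}$ as a supremum — and then chain ordinary triangle inequalities in $E$. First I would dispose of the degenerate cases: if $Y=\emptyset$ then $d(y,Y)=+\infty$ under the usual convention $\inf\emptyset=+\infty$ and the inequality is trivial, and likewise if $d_{\rm Haus}(X,Y)=+\infty$ there is nothing to prove. So one may assume $Y\neq\emptyset$ and $d_{\rm Haus}(X,Y)<\infty$; the latter together with $Y\neq\emptyset$ forces $X\neq\emptyset$ as well, since otherwise $d(y',X)=+\infty$ for every $y'\in Y$ and $d_{\rm Haus}$ would be infinite.

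Next, fix $\ep>0$ and an arbitrary $y'\in Y$. Since $d(y',X)\le\sup_{z\in Y}d(z,X)\le d_{\rm Haus}(X,Y)$, there exists $x'\in X$ with $d(y',x')\le d_{\rm Haus}(X,Y)+\ep$. Then two applications of the triangle inequality give
\[
d(x,X)\le d(x,x')\le d(x,y)+d(y,y')+d(y',x')\le d(x,y)+d(y,y')+d_{\rm Haus}(X,Y)+\ep.
\]
The left-hand side does not depend on $y'$, so taking the infimum over $y'\in Y$ on the right replaces $d(y,y')$ by $d(y,Y)$, and then letting $\ep\to0$ yields the claimed inequality.

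The proof is essentially bookkeeping, so there is no real obstacle; the two points that require a little care — and which I would flag as the \emph{only} subtlety — are choosing the correct side of the two-sided definition of $d_{\rm Haus}$ (the bound $d(y',X)\le d_{\rm Haus}(X,Y)$ must come from the term $\sup_{z\in Y}d(z,X)$, not from $\sup_{z\in X}d(z,Y)$), and remembering that the infimum defining $d(y',X)$ need not be attained, which is why one passes to an $\ep$-approximate point $x'$ rather than an exact minimiser.
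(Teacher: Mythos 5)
Your proof is correct and follows essentially the same route as the paper's: a chain of ordinary triangle inequalities combined with the bound $d(y',X)\le d_{\rm Haus}(X,Y)$ for $y'\in Y$; the paper merely works with the inequality $d(y,X)\le d(y,\tilde y)+d(\tilde y,X)$ directly instead of selecting an $\ep$-approximate nearest point $x'$. The extra care you take with empty sets and infinite values is fine but not needed for the argument's substance.
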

	\begin{proof}
		The triangle inequality for $d$ gives that for all $x$, $\tilde{x}$ and $y$, $d(x,\tilde{x}) \leq d(x,y) + d(y,\tilde{x})$, and taking the infimum for $\tilde{x}$ on $X$ gives
		\begin{equation}\label{ineq1}
		d(x,X) \leq d(x,y)+ d(y,X)   \,\; \forall x,y \in E.    
		\end{equation}
		We change the variables in \eqref{ineq1}: for all $y$ and $\tilde{y}$, 
		\[d(y,X) \leq d(y,\tilde{y})+ d(\tilde{y},X).\]
		If $\tilde{y}$ is in $Y$, by definition of the Hausdorff distance we get 
		\[d(y,X) \leq d(y,\tilde{y}) + d_{\rm Haus }(X,Y) \]
		and taking the infimum for $\tilde{y}$ on $Y$ gives that \[
		d(y,X) \leq d(y,Y) +  d_{\rm Haus }(X,Y).\]
		We conclude by injecting this last inequality into \eqref{ineq1}.
	\end{proof}

To bound the Hausdorff distance between the wavefronts, we will describe the wavefront at time $t$ as the image of the (Clarke) derivative of the initial condition by a suitable function $\psi$ depending on the initial condition, which will allow to apply the following elementary continuity result for the Hausdorff distance.  

\begin{lem}[Continuity for the Hausdorff distance] \label{contdh} Let $f,g : (F,\tilde{d})\mapsto (E,d)$ be two functions between two topological spaces, and $X$ and $Y$ be two subsets of $F$. Then
	\begin{enumerate}
		\item if $d\left(f(x),g(x)\right) \leq a$ for all $x$ in $X$, then $d_{\rm Haus} \left(f(X),g(X)\right) \leq a$,
		\item if $f$ is uniformly continuous on $X$, \emph{i.e.} for all $\alpha>0$, there exists $\ep>0$ such that for all $(x,y) \in X$, $\tilde{d}(x,y)< \ep$ implies $d(f(x),f(y)) < \alpha$, then 
		\[\tilde{d}_{\rm Haus} (X,Y) < \ep \implies d_{\rm Haus} \left(f(X),f(Y)\right)< \alpha. \]
	\end{enumerate}
\end{lem}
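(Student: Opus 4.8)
Both statements are straightforward unwrappings of the definitions of image set, infimal distance, and Hausdorff distance; the plan is simply to chase the quantifiers carefully in the right order.

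For part (1), I would start from the definition $d_{\rm Haus}(f(X),g(X)) = \max\bigl(\sup_{\xi\in f(X)} d(\xi,g(X)), \sup_{\eta\in g(X)} d(\eta,f(X))\bigr)$ and bound each of the two suprema by $a$. For the first one, any $\xi\in f(X)$ is of the form $\xi = f(x)$ for some $x\in X$; since $g(x)\in g(X)$, we get $d(f(x),g(X))\le d(f(x),g(x))\le a$ by hypothesis, and taking the supremum over $x\in X$ keeps the bound $a$. The second supremum is handled symmetrically, using that $d(f(x),g(x))=d(g(x),f(x))$. Hence the maximum is at most $a$.

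For part (2), fix $\alpha>0$ and let $\ep>0$ be the modulus of uniform continuity of $f$ on $X$ furnished by the hypothesis. Assume $\tilde d_{\rm Haus}(X,Y)<\ep$. I would again bound the two suprema defining $d_{\rm Haus}(f(X),f(Y))$ separately. Take $\xi = f(x)\in f(X)$ with $x\in X$. Since $\sup_{x\in X} \tilde d(x,Y) \le \tilde d_{\rm Haus}(X,Y)<\ep$, there is a point $y\in Y$ with $\tilde d(x,y)<\ep$ (here one should be slightly careful: the distance to $Y$ is an infimum and may not be attained, but since it is strictly less than $\ep$ one can always pick $y\in Y$ realizing $\tilde d(x,y)<\ep$). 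Uniform continuity then gives $d(f(x),f(y))<\alpha$, so $d(f(x),f(Y))<\alpha$; taking the supremum over $x\in X$ yields $\sup_{\xi\in f(X)} d(\xi,f(Y))\le\alpha$. The other supremum, $\sup_{\eta\in f(Y)} d(\eta,f(X))$, is estimated the same way, using that $\sup_{y\in Y}\tilde d(y,X)<\ep$ and the uniform continuity of $f$ on $X$ (the points $f(x)$ produced lie in $f(X)$, which is all that is needed). Therefore $d_{\rm Haus}(f(X),f(Y))\le\alpha<\alpha$ is not quite what we want — we actually obtain $d_{\rm Haus}(f(X),f(Y))\le\alpha$, and since $\alpha$ was an arbitrary target the stated strict inequality $<\alpha$ follows by applying the argument to a slightly smaller $\alpha'$, or simply by noting each of the finitely many suprema is a supremum of quantities $<\alpha$.

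There is no real obstacle here; the only point requiring a moment's attention is the passage from a strict inequality on an infimum (distance to a set) to the existence of an actual point of that set witnessing the strict inequality, and the mild bookkeeping of whether the final Hausdorff bound is strict or non-strict — both are routine. I would keep the write-up to a few lines per item.
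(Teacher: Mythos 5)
Your proposal is correct and follows essentially the same route as the paper: part (1) by bounding $d(f(x),g(X))\le d(f(x),g(x))$, and part (2) by picking a witness $y\in Y$ with $\tilde d(x,y)<\ep$ (the paper uses a minimizing sequence, which is the same idea) and invoking uniform continuity, with the symmetric supremum handled identically. The only quibble is your parenthetical claim that a supremum of quantities $<\alpha$ is itself $<\alpha$ (it is only $\le\alpha$); but your first fix (run the argument with a smaller $\alpha'$) disposes of this, and the paper glosses over the same point since only the non-strict bound is ever used.
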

\begin{proof}[Proof of Lemma \ref{contdh}] \begin{enumerate}
		\item By definition of the Hausdorff distance, it is enough to observe that $d\left(f(x),g(X)\right) \leq a$ for all $x$ in $X$, since this quantity is smaller than $d(f(x),g(x))$.
		\item Using the symmetry of the definition of $d_{\rm Haus}$, it is enough to prove that if $\tilde{d}_{\rm Haus} (X,Y) <\ep$, $d(f(x),f(Y))<\alpha$ for all $x$ in $X$. For all $x$ in $X$, there exists a sequence $y_n$ in $Y$ such that $\tilde{d}(x,y_n) \underset{n\to \infty}{\to}\tilde{d}(x,Y)$. Since $\tilde{d}(x,Y)\leq \tilde{d}_{\rm Haus} (X,Y)$, this implies that $\tilde{d}(x,y_n)< \ep$ for $n$ large enough, and the uniform continuity of $f$ gives that  $d(f(x),f(y_n))< \alpha$ for $n$ large enough, hence $d\left(f(x),f(Y)\right) < \alpha$.
	\end{enumerate}
\end{proof}

\begin{proof}[Proof of Theorem \ref{joukrec}] Let $H$ be an integrable non convex non concave Hamiltonian with bounded second derivative. Corollary \ref{joukrecsc} gives a Lipschitz initial condition $u_L$ for which there exist $t>0$ and $q$ such that
	\[d\left((q,V^t_0 u_L(q)), \ff^t_{u_L} \right) >0\]
where $\ff^t_{u_L}$ denotes the section of $\ff_{u_L}$ at time $t$. We denote by $\alpha$ this positive quantity.
	
	Let us denote by $L$ the Lipschitz constant of $u_L$. 
	
	We propose an other description of the wavefront at time $t$: for all Lipschitz function $v$, we define \[
	\begin{array}{cccc}
	\psi^t_v : & T^\star \rr^d & \to & \rr^d \times \rr\\
	&	(q,p) &\mapsto &\left(q + t\nabla H (p), v(q)+t (p\cdot \nabla H (p) -H(p))\right), 
	\end{array}\]
	in such a way that $\ff_v^t=\psi^t_v({\rm graph}(\dd v))$ (see \eqref{defwf2} for a comparison).
	
	Note that $\psi^t_{v}$ is Lipschitz, hence uniformly continuous on every $\rr^d \times \{\|p\| \leq R\}$ for $R>0$: it is Lipschitz with respect to $q$ because $v$ is, and its derivative with respect to $p$, $\left(td^2H(p), t p \cdot d^2H(p)\right)$, is bounded on this set since $d^2H$ is bounded.
	
	The uniform continuity of $\psi^t_{u_L}$ on $\rr^d \times \{p\leq L+1\}$ gives a $\ep \in (0,1)$ such that 
	\[ \left\{ \begin{array}{l}
	\|(q,p)-(\tilde{q},\tilde{p})\|< \ep, \\
	\|p\|,\|\tilde{p}\|\leq L+1,
	\end{array}\right. \implies \|\psi^t_{u_L}(q,p)-\psi^t_{u_L}(\tilde{q},\tilde{p})\| < \alpha/4.\]
	
	By Theorem \ref{rifczarn}, there exists a smooth function $u$ such that \begin{align}
	\label{5}		\|u- u_L\|_\infty & < \alpha/4, \\
	\label{6}	 d_{\rm Haus} \left({\rm graph}(d u ), {\rm graph}(\dd u_L)\right) & < \ep .
	\end{align}
	Note that since $\ep \in (0,1)$, $u$ is $(L+1)$-Lipschitz.
	
		On the one hand, Proposition \ref{visc} gives the comparison between the viscosity solutions:
		\[\|V^t_0 u-V^t_0 u_L\|_\infty \leq \|u-u_L\|_\infty  \leq \alpha/4.\]
			
	On the other hand, we estimate the Hausdorff distance between the wavefronts, using the definition of $\psi$:
	\[
\begin{split}
d_{\rm Haus} \left(\ff^t_u,\ff^t_{u_L}\right) =\,	& d_{\rm Haus} \left(\psi^t_{u}({\rm graph}(du)),\psi^t_{u_L}({\rm graph}(\dd u_L))\right)\\
&\leq d_{\rm Haus}  \left(\psi^t_{u}({\rm graph}(du)),\psi^t_{u_L}({\rm graph}(du))\right) \\
	& \,\;\,\;	+ 	d_{\rm Haus} \left(\psi^t_{u_L}({\rm graph}(du)),\psi^t_{u_L}({\rm graph}(\dd u_L))\right). 
\end{split}
	\]
	The first part of Lemma \ref{contdh} applied with $f=\psi^t_{u_L}$, $g=\psi^t_u$, $X=  {\rm graph}(d u)$ gives that the first term of the right hand side is bounded by $\|\psi^t_u- \psi^t_{u_L}\|_\infty =\|u-u_L\|_\infty \leq \alpha/4$. 
	
	The second part of Lemma \ref{contdh} applied with $f=\psi^t_{u_L}$,  $X= {\rm graph}(\dd u_L)$ and $Y= {\rm graph}(d u)$  gives that the second term of the right hand side is smaller than $\alpha/4$, by uniform continuity of $\psi^t_{u_L}$, since ${\rm graph}(du)$ and ${\rm graph}(\dd u_L)$ are  both contained in $\rr^d \times \{p\leq L+1\}$ and are $\ep$-close for the Hausdorff distance, see \eqref{6}. We hence proved that
	\[d_{\rm Haus} \left(\ff^t_u,\ff^t_{u_L}\right) \leq \alpha/2.\]

	Let us now apply Lemma \ref{ete} with $x=(q,V^t_0 u_L(q))$, $y=(q,V^t_0 u_L(q))$, $X=\ff^t_{u_L}$ and $Y=\ff^t_u$:
	\[ \begin{split}
	\alpha = & \,\;d\left((q,V^t_0 u_L(q)), \ff^t_{u_L} \right) \\
&	\leq \underbrace{d\left((q,V^t_0 u_L(q)),(q,V^t_0 u(q))\right)}_{\leq \|V^t_0 u_L-V^t_0 u\|_\infty\leq \alpha/4} + d\left((q,V^t_0 u(q)),\ff^t_u\right) + \underbrace{d_{\rm Haus} \left(\ff^t_{u_L},\ff^t_u\right)}_{\leq \alpha/2}.
	\end{split}
   \]
	As a consequence, $d\left((q,V^t_0 u(q)),\ff^t_u\right)\geq \alpha/4>0$ and the graph of the viscosity solution associated with the smooth initial condition $u$ is not contained in the wavefront $\ff_u$.	
\end{proof}

\section{Semiconcavity arguments} \label{proofvarmin}

This section contains the proofs of Propositions \ref{varmin} and \ref{v<v}, as well as an additional Proposition \ref{minf} used in the proof of the two-dimensional case (see \S \ref{ce2}). The three proofs rely on the following lemma, proved in \cite{bernard2} (Lemma $6$):	
	\begin{lem}\label{lem}
		If $u$ is a Lipschitz and $B$-semiconcave function on $\rr^d$, there exists a family $F$ of $\cc^2$ equi-Lipschitz functions with second derivatives bounded by $B$	such that:\begin{itemize}
			\item $u(q)=\min_{f\in F}f(q)$ for any $q$,
			\item for each $q$ in $\rr^d$ and $p$ in $\dd u(q)$, there exists $f$ in $F$ such that $\left\{\begin{array}{c}
			f(q)=u(q),\\
			df(q)=p.
			\end{array}\right.$ 
		\end{itemize}
	\end{lem}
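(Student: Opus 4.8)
The plan is to realize $u$ as a pointwise minimum of suitably truncated supporting paraboloids. The starting observation is the standard supporting-paraboloid property of semiconcave functions: writing $v:=u-\tfrac{B}{2}\|\cdot\|^2$, which is concave and finite on $\rr^d$, one has that $p\in\dd u(q_0)$ forces the \emph{global} inequality
\[
u(q)\ \le\ f_{q_0,p}(q)\ :=\ u(q_0)+p\cdot(q-q_0)+\tfrac{B}{2}\|q-q_0\|^2\qquad\text{for every }q\in\rr^d,
\]
with equality and gradient $p$ at $q_0$; indeed $\dd u(q_0)$ is contained in the superdifferential of $u$ at $q_0$, and a concave function lies everywhere below its supporting affine functions. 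So my first step is just to record that the $f_{q_0,p}$ are $\cc^2$, have constant Hessian $B\,\mathrm{Id}$, dominate $u$, and touch it with the prescribed derivative at every point of the graph of $\dd u$ --- the only defect being that they are not Lipschitz, so the family $\{f_{q_0,p}\}$ is not equi-Lipschitz.

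Next I would truncate. Let $L$ be a Lipschitz constant for $u$, set $\rho:=4L/B$, and fix once and for all a continuous $\beta:[0,\infty)\to[0,B]$ with $\beta\equiv B$ on $[0,\rho]$ and $\beta\equiv 0$ on $[\rho+1,\infty)$. Put $G(r):=\int_0^r\!\!\int_0^s\beta(\tau)\,d\tau\,ds$ and
\[
\tilde f_{q_0,p}(q)\ :=\ u(q_0)+p\cdot(q-q_0)+G\!\left(\|q-q_0\|\right).
\]
Since $G(r)=\tfrac{B}{2}r^2$ on $[0,\rho]$, the function $\tilde f_{q_0,p}$ agrees with $f_{q_0,p}$ on $\bar B(q_0,\rho)$, hence is $\cc^2$ with $\tilde f_{q_0,p}(q_0)=u(q_0)$ and $d\tilde f_{q_0,p}(q_0)=p$; its Hessian has radial eigenvalue $G''(r)=\beta(r)$ and tangential eigenvalue $G'(r)/r$ (a running average of $\beta$), both in $[0,B]$, so $\|d^2\tilde f_{q_0,p}\|\le B$; and $\|d\tilde f_{q_0,p}\|\le\|p\|+G'(\infty)\le L+B(\rho+1)=5L+B$, a bound independent of $(q_0,p)$, so $\{\tilde f_{q_0,p}\}$ is equi-Lipschitz. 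I would then take $F:=\{\tilde f_{q_0,p}:q_0\in\rr^d,\ p\in\dd u(q_0)\}$.

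The remaining step is to check that truncation did not spoil domination, i.e.\ that still $\tilde f_{q_0,p}\ge u$ on all of $\rr^d$. Inside $\bar B(q_0,\rho)$ this is the first step. Outside, along any unit ray $q=q_0+re$ with $r>\rho$, the one-variable slope $r\mapsto p\cdot e+G'(r)$ is bounded below by $-L+B\rho=3L\ge L$; integrating from $r=\rho$, using that $\tilde f_{q_0,p}=f_{q_0,p}$ at radius $\rho$, the elementary bound $f_{q_0,p}(q_0+\rho e)\ge u(q_0)+L\rho\ge u(q_0+\rho e)$ (this is exactly where $\rho=4L/B$ is used), and the Lipschitz estimate $u(q_0+re)\le u(q_0+\rho e)+L(r-\rho)$, one gets $\tilde f_{q_0,p}(q_0+re)\ge u(q_0+re)$. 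Thus every member of $F$ dominates $u$, while $\tilde f_{q,p}(q)=u(q)$ for each $q$ and each $p\in\dd u(q)$; since moreover $\dd u(q)\neq\emptyset$ everywhere ($u$ being locally Lipschitz), $F$ realizes $u$ as its pointwise minimum and hits every reachable derivative, which yields both conclusions.

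The one delicate point is this last step. A truncation keeping the second derivative bounded by $B$ is forced to bend the paraboloid \emph{downward}, so one must make sure it is not bent below $u$; that is why the truncation radius cannot be shrunk to zero but must be of order $L/B$ --- large enough that the paraboloid has already overtaken the Lipschitz cone of $u$ before the bending begins. Everything else --- the inclusion of $\dd u(q_0)$ in the superdifferential for semiconcave $u$, the nonemptiness of the superdifferential of a finite concave function, and the radial Hessian computation --- is routine.
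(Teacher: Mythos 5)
Your proof is correct. The paper does not prove this lemma itself (it defers to Lemma 6 of the cited reference of Bernard), and your argument is the expected one: supporting paraboloids $u(q_0)+p\cdot(q-q_0)+\tfrac{B}{2}\|q-q_0\|^2$ obtained from the concavity of $u-\tfrac{B}{2}\|\cdot\|^2$ together with the identification of the Clarke derivative with the superdifferential, followed by a radial flattening to restore equi-Lipschitzness. The only delicate point, that the flattening must start only at radius of order $L/B$ so that the paraboloid has already cleared the Lipschitz cone of $u$, is exactly the one you isolate, and your choice $\rho=4L/B$ together with the slope bound $-L+B\rho\ge L$ outside $\bar B(q_0,\rho)$ makes the domination argument go through.
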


\begin{proof}[Proof of Proposition \ref{varmin}]
 Proposition \ref{extvar} states that the variational solution gives a section of the generalized wavefront. As a consequence \[
	R^t_0 u_0(q)\geq \inf\left\{ u_0(q_0)+ \aaa^t_0(\gamma)\left|\begin{array}{c}
	(q_0,p_0)\in \rr^d\times  \rr^d, \\
	p_0 \in \dd u_0(q_0),\\
	Q^t_0(q_0,p_0)=q.
	\end{array} \right\}\right. .\] 
	If $u_0$ is $L$-Lipschitz and $B$-semiconcave, take $T$ such that the method of characteristics is valid ($T=1/BC$ if $H$ is integrable).
	Let us fix definitively $q$, $q_0$, $p_0 \in \dd u_0(q_0)$ and $0\leq t \leq T$ such that $Q^t_0(q_0,p_0)=q$ and show that $R^t_0 u_0(q) \leq u_0(q_0)+\aaa^t_0(\gamma)$ where $\gamma$ is the Hamiltonian trajectory issued from $(q_0,p_0)$.
	
	Lemma \ref{lem} gives a $\cc^2$ function $f_0$ of $F$ such that $f_0(q_0)=u_0(q_0)$ and $df_0(q_0)=p_0$. 
	Since this function is $\cc^2$ with second derivative bounded by $B$, the method of characteristics gives that $q_0$ is the only point such that $Q^t_0(q_0,df_0(q_0))=q$, and the variational operator applied to the initial condition $f_0$ gives necessarily the $\cc^2$ solution:
	\[R^t_0 f_0 (t,q)=f_0(q_0)+\aaa^t_0(\gamma).\]
	
	But by definition of $F$, $f_0$ is larger than $u_0$ on $\rr^d$, and the monotonicity of the variational operator brings the conclusion: \[
	R^t_0 u_0(q) \leq R^t_0 f_0(q)=f_0(q_0)+\aaa^t_0(\gamma)=u_0(q_0)+\aaa^t_0(\gamma).
	\]

\end{proof}
\begin{proof}[Proof of Proposition \ref{v<v}.]
	Take $T$ such that the method of characteristics is valid (for example $T=1/BC$ if $H$ is integrable).
	
	If $t$ and $q$ are fixed, Proposition \ref{extvar} gives the existence of $(q_0,p_0)$ in $gr(\dd u_0)$ such that $Q^t_0(q_0,p_0)=q$ and that $R^t_0 u_0(q) = u_0(q_0)+\aaa^t_0(\gamma)$ where $\gamma$ is the Hamiltonian trajectory issued from $(q_0,p_0)$.
	
	Lemma \ref{lem} gives a $\cc^2$ function $f_0$ of $F$ such that $f_0(q_0)=u_0(q_0)$ and $df_0(q_0)=p_0$. 
	The method of characteristics states that there exists on $[0,T]\times \rr^d$ a unique $\cc^2$ solution of the \eqref{HJ} equation with initial condition $f_0$, which satisfies in particular \[f(t,q)=f_0(q_0)+ \aaa^t_0(\gamma).\] 	
	Since a $\cc^1$ solution is a viscosity solution, the uniqueness of viscosity solutions hence gives that $V^t_0 f = f(t,\cdot)$ for all $t$ in $(0,T)$, and in particular
	\[V^t_0 f_0 (q)=f(t,q)=f_0(q_0)+\aaa^t_0(\gamma).\]
	
	But by definition of $F$, $f_0$ is larger than $u_0$ on $\rr^d$, and the monotonicity of the viscosity operator $V^t_0$ brings the conclusion:
	\[V^t_0 u_0(q) \leq V^t_0 f_0 (t,q) = f_0(q_0)+\aaa^t_0(\gamma)=R^t_0 u_0(q).\]
	
	Since $(t,q)\mapsto R^t_0 u_0(q)$ is less or equal than any variational solution as long as $t<T$ (Proposition \ref{varmin}), this implies that for all variational solution $g$, $V^t_0 u_0(q)\leq g(t,q)$ on $[0,T]\times \rr^d$.
\end{proof}

We end this paragraph with another result of the same flavor, used in the proof of Proposition \ref{contrex}.

\begin{prop}\label{minf} Let $F$ be as in Lemma \ref{lem} and $u= \min_{f\in F} f$. If $T>0$ denotes a time of shared existence of $\cc^2$ solutions for initial conditions in $F$, and $u_f$ denotes the $\cc^2$ solution of the Hamilton--Jacobi equation associated with the $\cc^2$ initial condition $f$, then for all $0\leq t\leq T$
	\[R^t_0 u(q) = \min_{f\in F} u_f(t,q).\]
\end{prop}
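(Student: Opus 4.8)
The plan is to prove the two inequalities separately, each following the template of the proof of Proposition \ref{varmin} in Section \ref{proofvarmin}: the ``$\leq$'' direction from monotonicity, the ``$\geq$'' direction from the characterization of $R^t_0 u$ as a minimal section given by Proposition \ref{varmin}. Fix once and for all $t\in[0,T]$ and $q\in\rr^d$. The members of $F$ all have second derivatives bounded by the same constant $B$ (the semiconcavity bound of $u$), so the hypothesis on $T$ is precisely what makes the method of characteristics valid on the common interval $[0,T]\times\rr^d$ for every $f\in F$, producing the $\cc^2$ solutions $u_f$.

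For the inequality $R^t_0 u(q)\leq \inf_{f\in F} u_f(t,q)$: since $u=\min_{f\in F}f$ we have $u\leq f$ for each $f\in F$, so monotonicity of the variational operator gives $R^t_0 u\leq R^t_0 f$. Now $f$ is $\cc^2$ with $\|d^2 f\|\leq B$ and $t\leq T$, so the Cauchy problem for $f$ has a unique $\cc^2$ solution $u_f$; as in the proof of Proposition \ref{varmin}, since $q_0$ is the only point with $Q^t_0(q_0,df(q_0))=q$, the variational property \eqref{var} forces $R^t_0 f=u_f(t,\cdot)$. Hence $R^t_0 u(q)\leq u_f(t,q)$ for every $f$, and taking the infimum over $F$ yields the bound.

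For the reverse inequality, apply Proposition \ref{varmin} to the $B$-semiconcave initial condition $u$ (valid on $[0,T]$) to get
\[ R^t_0 u(q)=\inf\left\{\, u(q_0)+\aaa^t_0(\gamma) \;:\; p_0\in\dd u(q_0),\ Q^t_0(q_0,p_0)=q \,\right\}, \]
where $\gamma$ is the Hamiltonian trajectory issued from $(q_0,p_0)$. Fix an admissible pair $(q_0,p_0)$. Lemma \ref{lem} provides $f\in F$ with $f(q_0)=u(q_0)$ and $df(q_0)=p_0$; since $f$ is $\cc^2$ with $\|d^2 f\|\leq B$, the method of characteristics applied to $f$ along the trajectory issued from $(q_0,df(q_0))=(q_0,p_0)$, which reaches $q$ at time $t$ because $Q^t_0(q_0,p_0)=q$, gives
\[ u_f(t,q)=f(q_0)+\aaa^t_0(\gamma)=u(q_0)+\aaa^t_0(\gamma)\geq \inf_{g\in F}u_g(t,q). \]
As $(q_0,p_0)$ was arbitrary among admissible pairs, taking the infimum gives $R^t_0 u(q)\geq \inf_{g\in F}u_g(t,q)$, hence equality. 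Finally, Proposition \ref{extvar} produces an admissible pair realizing the infimum defining $R^t_0 u(q)$; the associated $f\in F$ then satisfies $u_f(t,q)=R^t_0 u(q)$, so the infimum over $F$ is attained and the statement can indeed be written with a minimum.

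I do not expect a genuine obstacle: the argument is a recombination of the two halves of the proof of Proposition \ref{varmin}, together with the pointwise matching property of $F$ from Lemma \ref{lem}. The only points needing care are that the common second-derivative bound $B$ makes the method of characteristics valid on a single interval $[0,T]$ for all of $F$ (this is exactly the hypothesis on $T$), and that one handles the Clarke derivative consistently when invoking Proposition \ref{varmin}, Lemma \ref{lem} and Proposition \ref{extvar} simultaneously — all of which is done verbatim as in Section \ref{proofvarmin}.
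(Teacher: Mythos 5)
Your argument is essentially the paper's: the ``$\leq$'' direction by monotonicity plus the fact that the variational operator returns the classical solution when it exists, and the ``$\geq$'' direction by producing, for the pair $(q_0,p_0)$ attached to $R^t_0u(q)$, a matching $f\in F$ via Lemma \ref{lem} and identifying $u(q_0)+\aaa^t_0(\gamma)$ with $u_f(t,q)$. One step, however, is not justified as written: for the reverse inequality you ``apply Proposition \ref{varmin} to $u$ (valid on $[0,T]$)'', but Proposition \ref{varmin} only holds on a window $[0,T']$ with $T'$ depending on $B$ and $C$ (e.g.\ $T'=1/BC$ for integrable $H$), whereas the $T$ of the present statement is any time of \emph{shared existence} of the classical solutions $u_f$, which can be strictly larger --- indeed in the application (Proposition \ref{exdim2}) the $u_f$ are global and the conclusion is needed for all $t\geq 0$. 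The detour through Proposition \ref{varmin} is also unnecessary: Proposition \ref{extvar} alone, valid for all $t$, already gives an admissible $(q_0,p_0)$ with $R^t_0u(q)=u(q_0)+\aaa^t_0(\gamma)$, and the rest of your argument goes through verbatim from there. Your own closing remark invoking Proposition \ref{extvar} contains exactly this fix, so the gap is one of presentation rather than of substance; with that substitution the proof is correct and coincides with the paper's.
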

\begin{proof} Since $u \leq f$ for all $f$ in $F$, the monotonicity of the variational operator guarantees that $R^t_0 u(q) \leq  \min_{f\in F} R^t_0f(q)$. The method of characteristics implies that the variational operator is given by the classical solution if it exists, hence $R^t_0 f(q)=u_f(t,q)$ for all $t$ in $[0,T]$ and thus
	\begin{equation}\label{ineq}R^t_0 u(q) \leq \min_{f\in F} u_f(t,q).\end{equation}
Now, for all $(t,q)$, the variational property gives the existence of a $(q_0,p_0)$ in the graph of $\dd u$ such that 
\[R^t_0 u(q) = u(q_0) + \aaa^t_0(\gamma)\]
where $\gamma$ denotes the Hamiltonian trajectory issued from $(q_0,p_0)$. Since $F$ is as in Lemma \ref{lem}, there exists $f$ in $F$ such that $f(q_0)=u(q_0)$ and $df(q_0)=p_0$. The method of characteristics implies furthermore that $u_f(t,q)=f(q_0)+ \aaa^t_0(\gamma)$. Summing all this up, we get
\[R^t_0 u(q) = u(q_0) + \aaa^t_0(\gamma)=f(q_0) + \aaa^t_0(\gamma)= u_f(t,q)\]
and the inequality \eqref{ineq} is an equality.
\end{proof}

\subsubsection*{Acknowledgement} The author is very grateful to P. Bernard and J.-C. Sikorav who helped organizing this paper and proposed many improvements in the argumentation, as well as to M. Zavidovique for a fruitful discussion. The research leading to these results has received funding from the European Research Council under the European Union’s Seventh Framework Programme (FP/2007-2013) / ERC Grant Agreement 307062 and from the French National Research Agency via ANR-12-BLAN-WKBHJ.

\bibliographystyle{alpha}
\bibliography{biblio} 

\end{document}